\documentclass[12pt]{amsart}
\usepackage{amsmath,amstext,amssymb,amsopn,amsthm}
\usepackage{natbib}
\usepackage{mathtools}
\usepackage{enumerate}

\usepackage{xcolor,graphicx}
\usepackage{mathrsfs,dsfont}

\numberwithin{equation}{section}

\allowdisplaybreaks

\usepackage[utf8]{inputenc}
\usepackage{hyperref}

\makeatletter
\g@addto@macro\th@plain{\thm@headpunct{}}
\makeatother

\numberwithin{equation}{section}

\newtheorem{thm}{Theorem}[section]
\newtheorem*{thm*}{Theorem}
\newtheorem{lemma}[thm]{Lemma}
\newtheorem{Corollary}[thm]{Corollary}
\newtheorem{proposition}[thm]{Proposition}

\theoremstyle{definition}

\newtheorem{defin}[thm]{Definition}

\newtheorem{remark}[thm]{Remark}
\newtheorem{example}[thm]{Example}
\newtheorem{notation}[thm]{Notation}

\def \C {{\mathbb C}}
\def \E {{\mathbb E}}
\def \P {{\mathbb P}}
\def \R {{\mathbb R}}
\def \eps {\varepsilon}
\def \A {{\mathbb A}}

\def \B {{\mathbb B}}
\def \X {{\mathbb X}}

\def \Ab {{\mathbf A}}
\def \Bb {{\mathbf B}}
\def \Xb {{\mathbf X}}
\def \Ub {{\mathbf U}}
\def \Vb {{\mathbf V}}
\def \ub {{\mathbf u}}
\def \vb {{\mathbf v}}
\def \Yb {{\mathbf Y}}

\def \id {{\mathbf{id}}}

\newcommand{\dd}{\mathrm{d}}
\newcommand{\cA}{ {\mathcal A} }
\newcommand{\tcA}{\tilde{\mathcal{A}}}
\newcommand{{\cB}}{ {\mathcal B} }

\newcommand{\cW}{ {\mathcal W} }
\newcommand{\xb}{ {\mathbf x} }
\newcommand{\yb}{ {\mathbf y} }
\newcommand{\ab}{ {\mathbf a} }
\newcommand{\bb}{ {\mathbf b} }

\makeatletter
\@namedef{subjclassname@2020}{%
	\textup{2020} Mathematics Subject Classification}
\makeatother

\title[Spectral distribution of matrix perpetuities]{On the empirical spectral distribution of matrix perpetuities}
\author[B. Ko\l{}odziejek]{Bartosz Ko\l{}odziejek}
\address{Bartosz Ko\l{}odziejek: Faculty of Mathematics and Information Sciences, Warsaw University of Technology, Koszykowa 75, \mbox{00-662} Warsaw, Poland}
\email{bartosz.kolodziejek@pw.edu.pl}

\author[K. Szpojankowski]{Kamil Szpojankowski}
\address{Kamil Szpojankowski: Institute of Mathematics of the Polish Academy of Sciences, ul. \'Sniadeckich 8, 00-656 Warszawa, Poland, and 
	Faculty of Mathematics and Information Science,
	Warsaw University of Technology, Koszykowa 75, 00-662 Warszawa, Poland.} 
\email{kszpojankowski@impan.pl, kamil.szpojankowski@pw.edu.pl  }
\thanks{BK: This research was funded in part by National Science Centre, Poland, 2023/51/B/ST1/01535. \\ KSz: This research was funded in part by National Science Centre, Poland WEAVE-UNISONO grant BOOMER 2022/04/Y/ST1/00008.}

\keywords{Matrix perpetuities; empirical spectral distribution; tail asymptotics; weak convergence; free probability}
\subjclass[2020]{Primary 60B20; Secondary 60H25, 46L54, 60G70.}

\begin{document}

\begin{abstract}
We study matrix perpetuities, that is, solutions to affine fixed-point equations of the form
\[
\Xb \stackrel{d}{=} \Ab\,\Xb\,\Ab^\top+\Bb,\qquad (\Ab,\Bb)\mbox{ and }\Xb\mbox{ are independent},
\]
with particular emphasis on the empirical spectral distribution of the solution. We first establish existence and uniqueness results by relating the problem to classical vector perpetuities. Under orthogonal invariance, we then prove a compression identity for symmetric multiplicative convolution and show that principal submatrices of a matrix perpetuity are themselves lower-dimensional matrix perpetuities. For positive semidefinite, orthogonally invariant models, we prove a finite-dimensional spectral Kesten theorem: we obtain precise power-law tail asymptotics for the expected empirical spectral distribution, show that its tail is governed by the largest eigenvalue, and relate it explicitly to the tail of any diagonal entry. We also prove that, in the subcritical regime, the expected empirical spectral distribution of matrix perpetuities converges weakly, as the dimension tends to infinity, to the distribution of the corresponding free perpetuity. Our results are illustrated by matrix Beta prime perpetuities, for which explicit limiting spectral distributions are available.

\end{abstract}
\maketitle

\section{Introduction}
\subsection{Matrix perpetuities: definition and existence}

A random vector $\underline{X}$ is called a multivariate perpetuity if its law satisfies the affine fixed-point equation

\begin{align}\label{eqn:classical_perp}
\underline{X} \stackrel{d}{=} \Ab\,\underline{X} + \underline{B},
\end{align}
where  $\Ab$ is an $N\times N$ random matrix, $\underline{B}$ is an $N$-dimensional random vector,
and $\underline{X}$ is assumed to be independent of the pair $(\Ab,\underline{B})$.

The study of perpetuities is a prominent field in probability theory, driven by wide-ranging applications in actuarial science, finance, and queueing theory. Kesten-type affine recursions also arise naturally in random walks in random environments, models of directed polymers in random media, and random-field Ising chains; see \cite[Section~I.A]{MATRIXKESTEN} and the references therein. Once $\underline{X}$ is shown to exist, the focus shifts to understanding how the distribution of $(\Ab, \underline{B})$ determines the distribution of $\underline{X}$, with particular attention given to its asymptotic tail behavior.

The foundational result of Kesten \cite{Kes73} and subsequent developments (see \cite[Section 4.4]{BDM}) state that if there exists a  parameter $\alpha>0$ satisfying
\[
\inf_{n\in\mathbb{N}}\E[ \|\Pi_n\|^\alpha ]^{1/n} = 1,
\]
where $\Pi_n=\Ab_n\Ab_{n-1}\ldots \Ab_1$ and $\Ab_1,\ldots, \Ab_n$ are iid copies of $\Ab$, then under additional technical assumptions on the distribution of $(\Ab,\underline{B})$, there exists a non-zero Radon measure $\mu$ such that the solution to \eqref{eqn:classical_perp} satisfies
\[
t^{\alpha}\,\P\left(t^{-1}\underline{X}\in \cdot\right) \stackrel{v}{\longrightarrow}\mu,\qquad t\to+\infty,
\]
where $\stackrel{v}{\longrightarrow}$ denotes vague convergence of measures.

In the univariate case, the parameter $\alpha>0$ is determined by the equation $\E[|A|^\alpha]=1$. Under additional assumptions, Kesten's theorem guarantees that $\lim_{t\to+\infty}t^\alpha \P(X>t)$ exists and is strictly positive and finite.

In this paper we study matrix fixed point equations of the form
\begin{align}\label{eq:intro}
\Xb\stackrel{d}{=}\Ab \Xb \Ab^\top+\Bb,\qquad (\Ab,\Bb)\mbox{ and }\Xb\mbox{ are independent},
\end{align}
where $\Ab$ and $\Bb$ are square $N\times N$ random matrices with real entries, and $\mathbf{A}^\top$ denotes the transpose of the matrix $\mathbf{A}$.  One may vectorize this equation and apply classical vector perpetuity theory. This point of view is useful for establishing the existence and uniqueness of matrix perpetuities, as we explain in detail in Section \ref{Sec:existence}. The main condition under which a matrix perpetuity exists is $\gamma<0$, where $\gamma$ is the top Lyapunov exponent, defined as
\begin{align*}
\gamma=\inf_{n\geq 1} \frac{1}{n}\E\left[ \log \lambda_{\max}\left(\Ab_1\Ab_2\ldots\Ab_n \Ab_n^\top\ldots\Ab_{2}^\top\Ab_1^\top\right)\right].
\end{align*}

After establishing the existence of matrix perpetuities, we study the empirical spectral distribution (ESD) of $\Xb$. The ESD of a particular matrix Kesten recursion was studied in \cite{MATRIXKESTEN}; apart from this work, to the best of our knowledge, the ESD of matrix perpetuities has not previously been investigated systematically.

\subsection{Motivation and related work}
Before stating our main results, let us briefly indicate one possible motivation
for studying matrix perpetuities. Stochastic gradient descent and related random
iterative algorithms are known to generate heavy-tailed stationary distributions
through Kesten-type affine recursions; see, for instance, \cite{DM24}. In
machine-learning applications, however, one is often interested not only in the
tails of individual coordinates or matrix entries, but also in the tail behavior
of empirical spectral distributions of random matrices; see \cite{ML3}. A
spectral analogue of the classical Kesten mechanism should therefore describe
how multiplicative random matrix effects and additive perturbations shape the
eigenvalue distribution. The matrix perpetuity equation studied in this paper
provides a tractable closed model for this type of phenomenon, without being
intended as an exact model of stochastic gradient descent.

A related beta-type matrix perpetuity arises in the integrable-probability literature. In \cite{ABO}, the authors study discrete analogues of the Matsumoto-Yor and Dufresne identities for multiplicative random walks on the cone of positive definite matrices. In particular, they consider the recursion
\[
\Xb_n=\Ab_n^{1/2}\bigl(I+\Xb_{n-1}\bigr)\Ab_n^{1/2},
\]
where the increments $(\Ab_n)_n$ are iid matrix beta prime random matrices, and identify the stationary distribution through the fixed-point equation
\[
\Xb\stackrel d=\Ab^{1/2}(I+\Xb)\Ab^{1/2},\qquad \Ab\mbox{ and }\Xb\mbox{ are independent}.
\]
This is a special affine matrix perpetuity of the type considered in this paper. This example places matrix perpetuities in the broader context of integrable probability: such recursions arise naturally in matrix analogues of the Matsumoto-Yor and Dufresne identities and are closely related to matrix-valued directed polymer models. Related matrix polymer models with inverse-Wishart disorder were further developed, for instance, in \cite{ABO23,MatrixPolymer}. The beta-prime perpetuity above also serves as a running example throughout the present paper, where it is used to illustrate our general results on spectral distributions, tail behaviour, and high-dimensional limits.

Another matrix-valued Kesten recursion was studied in \cite{MATRIXKESTEN}. The authors consider positive semidefinite matrices satisfying
\[
\mathbf{Z}_{n+1}
=
(\varepsilon I+\mathbf{Z}_n)^{1/2}
\boldsymbol{\xi}_n
(\varepsilon I+\mathbf{Z}_n)^{1/2},
\]
where $(\boldsymbol{\xi}_n)_n$ is an iid sequence of positive semidefinite $N\times N$ random matrices. For their large-dimensional analysis, the authors further assume that the spectrum of $\boldsymbol{\xi}$ remains of order one as $N\to\infty$ and that the distribution is orthogonally invariant; no particular distribution is otherwise imposed in this regime. For the continuum limit at fixed $N$, they specialize to matrix log-normal increments generated by GOE or GUE noise.

Let $\mathbf{Z}$ denote the orthogonally invariant stationary solution to the above recursion. Its stationary fixed-point equation is
\[
\mathbf{Z}
\stackrel{d}{=}
(\varepsilon I+\mathbf{Z})^{1/2}
\boldsymbol{\xi}
(\varepsilon I+\mathbf{Z})^{1/2},\qquad \mathbf{Z}\mbox{ and }\boldsymbol{\xi}\mbox{ are independent.}
\]
The matrix on the right-hand side has the same eigenvalues as
\[
\boldsymbol{\xi}^{1/2}
(\varepsilon I+\mathbf{Z})
\boldsymbol{\xi}^{1/2}.
\]
Since both matrices have orthogonally invariant distributions, equality of their eigenvalue distributions implies equality of their matrix distributions.
Consequently,
\[
\mathbf{Z}
\stackrel{d}{=}
\boldsymbol{\xi}^{1/2}\mathbf{Z}\boldsymbol{\xi}^{1/2}
+\varepsilon\boldsymbol{\xi},\qquad \mathbf{Z}\mbox{ and }\boldsymbol{\xi}\mbox{ are independent.}
\]
Thus the stationary law is an affine matrix perpetuity of the form considered here, with 
\[
(\Ab,\Bb)
=
(\boldsymbol{\xi}^{1/2},\varepsilon\boldsymbol{\xi}).
\]

The authors of \cite{MATRIXKESTEN} study the recursion in the large-dimensional limit using free probability. Rotational invariance leads to asymptotic freeness of $\varepsilon I+\mathbf{Z}$ and $\boldsymbol{\xi}$, so that the limiting spectral evolution can be described through free symmetric multiplicative convolution. Using the multiplicative property of the $S$-transform, they derive a recursion for the limiting spectral distribution and a self-consistent equation for the $S$-transform of a stationary law (see also a related identity for the $S$-transforms in the proof of \cite[Corollary 4.1]{FreePerp}). They solve the corresponding equation in the continuum limit $\varepsilon\to0$, obtaining the inverse-Wishart limiting spectral distribution, and also derive perturbative corrections and moment formulas for the discrete model.

For the discrete recursion at finite dimension, however, \cite{MATRIXKESTEN} leaves open the problem of finding a matrix analogue of the scalar condition determining the heavy-tail exponent. More precisely, in their conclusion the authors ask ``whether the remarkably simple condition $\E[z^\nu]=1$, which determines the heavy tail exponent $\nu$ for the scalar case $N=1$, has any analog in the matrix case.'' We answer this question affirmatively for positive orthogonally invariant matrix perpetuities; see \eqref{eq:simple1} below and the discussion following it. Under our assumptions, this exponent governs the tails of the expected empirical spectral distribution, the largest eigenvalue, and any diagonal entry. Thus, within this class of matrix perpetuities, our results provide a positive solution to the finite-dimensional problem posed in \cite{MATRIXKESTEN}.

\subsection{First result: compression identities and spectral tail asymptotics}

Our finite-dimensional analysis rests on a compression identity for the finite-dimensional symmetric multiplicative convolution, which may also be of independent interest. More precisely, if $\Ab$ and $\Bb$ are independent and $\Bb$ is orthogonally invariant, then
\[
\bigl(\Ab\Bb\Ab^\top\bigr)^{[k]}
\stackrel{d}{=}
\bigl((\Ab\Ab^\top)^{[k]}\bigr)^{1/2}
\Bb^{[k]}
\bigl((\Ab\Ab^\top)^{[k]}\bigr)^{1/2},
\]
where $\mathbf{m}^{[k]}$ denotes the $k\times k$ leading principal submatrix of a matrix $\mathbf{m}$. Equivalently, principal compression is compatible with the finite-dimensional symmetric multiplicative convolution.

Adapting the compression identity, we show that, under orthogonal invariance of the pair $(\Ab,\Bb)$, if the law of $\Xb$ satisfies \eqref{eq:intro}, then the law of the principal $k\times k$ submatrix  $\Xb^{[k]}$ of $\Xb$ satisfies 
\[
 \Xb^{[k]}\stackrel{d}{=}((\Ab\Ab^\top)^{[k]})^{1/2}\Xb^{[k]} ((\Ab\Ab^\top)^{[k]})^{1/2}+\Bb^{[k]},\quad ((\Ab\Ab^\top)^{[k]},\Bb^{[k]})\mbox{ and }\Xb^{[k]}\mbox{ are independent.}
\]
In particular, for $k=1$, we obtain a scalar perpetuity equation
\begin{align*}
 \Xb_{11}\stackrel{d}{=} (\Ab\Ab^\top)_{11}\Xb_{11}+\Bb_{11},\qquad((\Ab\Ab^\top)_{11}, \Bb_{11})\mbox{ and }\Xb_{11}\mbox{ are independent}.
 \end{align*}
 This reduction allows us to apply scalar Kesten theory to diagonal entries of a matrix perpetuity. For positive semidefinite, orthogonally invariant models, we obtain a finite-dimensional spectral Kesten theorem. Under the condition \begin{align}\label{eq:simple1}
\E\left[((\Ab\Ab^\top)_{11})^\eta\right]=1
\end{align}
and suitable additional assumptions, the expected empirical spectral distribution has a regularly varying tail with exponent $\eta$. We show that this tail is asymptotically governed by the largest eigenvalue and relate it explicitly to the tail of any diagonal entry. In particular, the extreme event contributing to the spectral tail is asymptotically generated by a single eigenvalue.

\subsection{Second result: high-dimensional convergence to free perpetuities}
In \cite{FreePerp} we studied the free perpetuities, that is, solutions to the following fixed point equations 
\[
        \X \stackrel{d}{=} \A\,\X\,\A^* + \B,
\]
where $\X$ is $*$-free from the pair $(\A,\B)$, where $\B=\B^*$. As we have shown in \cite{FreePerp}, the free perpetuity equation is equivalent to $\X \stackrel{d}{=} |\A|\X\,|\A| + \B$, and we considered without loss of generality an equation of the form $\X \stackrel{d}{=} \A^{1/2}\X\,\A^{1/2} + \B$, assuming that $\A\geq 0$. We found sufficient conditions under which free perpetuity exists. In particular, if $\tau(\A)<1$ and $\B\geq 0$, then the free perpetuity exists. It also exists in the critical case $\tau(\A)=1$, provided that $\A$ is non-Dirac. The free analogue of Kesten’s criticality condition is $\tau(\A)=1$. We showed, for example, that under this condition, if $\mathrm{Var}(\A)<\infty$ then $ \mu_{\X}\big((t,+\infty)\big)\sim c \,t^{-1/2}$, with an explicit constant $c>0$.

From the point of view of matrix perpetuities, it is natural to ask how the ESD of matrix perpetuities connects to the distribution of the free perpetuity. More precisely, suppose that the sequence of random matrices $(\Ab_N,\Bb_N)$ converges in noncommutative distribution (for a precise definition, see Section~\ref{Sec:preliminaries}) to $(\A,\B)$. In this framework, we would like to understand whether, under suitable conditions, the expected ESD of $\Xb_N$ converges weakly to the distribution of $\X$. Asymptotic freeness emerges naturally when independent random matrices satisfy additional invariance of the distribution, therefore we assume throughout the paper that 
\begin{align}\label{eqn:intro_invariance}
    (\ub \Ab \ub^\top, \ub \Bb \ub^\top)\stackrel{d}{=}(\Ab,\Bb)\quad\mbox{or}\quad (\ub \Ab\Ab^\top \ub^\top, \ub \Bb \ub^\top)\stackrel{d}{=} (\Ab\Ab^\top, \Bb) 
\end{align} for any orthogonal matrix $\ub$. We show that in the subcritical regime $\tau(\A)<1$ indeed we have weak convergence of the expected ESD of $\Xb_N$ to the distribution of $\X$. In view of the connection between beta-type matrix perpetuities and matrix polymer models discussed above, this convergence result also suggests a possible route toward defining and studying free analogues of matrix polymers.

Our large-dimensional result complements the free-probability analysis of \cite{MATRIXKESTEN}.  For the special choice
\[
(\Ab,\Bb)
=
(\boldsymbol{\xi}^{1/2},\varepsilon\boldsymbol{\xi}),
\]
the limiting free fixed-point equation coincides with the stationary equation underlying the analysis of \cite{MATRIXKESTEN}.

It is important to note that tail asymptotics are not continuous under weak convergence of probability measures. Indeed, any probability measure $\mu$, regardless of its tail behaviour, can be approximated weakly by a sequence of compactly supported probability measures $(\mu_N)_{N\geq1}$. Consequently, although $\mu{\Xb_N}$ converges weakly to the law $\mu_{\X}$ of the free perpetuity, this convergence alone does not imply convergence of their tail asymptotics. 
 
\subsection{Orthogonal invariance and similarity assumptions} It is useful to distinguish our orthogonal invariance assumption \eqref{eqn:intro_invariance} from the similarity assumptions appearing in the multidimensional perpetuity literature. The action of an orthogonal matrix $\ub \in O(N)$ on \(\R^{N\times N}\) by conjugation is represented, after vectorization, by
\[
\mathrm{vec}(\ub\,\mathbf m\,\ub^\top)=\rho(\ub)\mathrm{vec}(\mathbf m),
\qquad \rho(\ub)=\ub\otimes\ub,
\]
where $\otimes$ is the Kronecker product. Thus, at the vectorized level, the corresponding orthogonal invariance may be written as
\[
(\rho(\ub)\tilde{\Ab}\rho(\ub)^\top,\rho(\ub)\mathrm{vec}(\Bb))\stackrel{d}{=}(\tilde{\Ab},\mathrm{vec}(\Bb))
\]
for any orthogonal matrix $\ub\in O(N)$, where  $\tilde{\Ab}:= \Ab\otimes \Ab$. 
Equivalently, this is invariance under the subgroup 
\[
\rho(O(N))=\{\ub\otimes \ub:\ub\in O(N)\}\subset O(N^2).
\]
This is different from the framework of \cite{Sim09,Sim10}, where the linear coefficients of a vector recursion $\R^d \ni x\mapsto \mathbf{M} \,x+Q$ are assumed to be similarities, namely
\[
\mathbf{M}=a\,\mathbf{U},\quad\mbox{where}\quad a>0,\quad \mathbf{U}\in O(d).
\]
In order to compare this with the full vectorization of our recursion, one should take \(d=N^2\),
so that the orthogonal factor \(\mathbf{U}\) acts on the whole vectorized space \(\R^{N^2}\). By contrast, in our matrix-valued recursion the relevant orthogonal action is only the conjugation action inherited from the underlying \(N\)-dimensional matrix structure. 
Although $\rho(O(N))$ is a proper subgroup of $O(N^2)$ for $N\geq 2$, this group inclusion alone does not make our assumption weaker. The two frameworks impose assumptions of a different nature: the similarity condition restricts every realization of the vectorized coefficient to lie in $\mathbb{R}_{+}O(N^2)$, whereas our condition imposes distributional invariance under the conjugation representation $\rho(O(N))$. Moreover, the matrices $\Ab\otimes\Ab$ need not be similarities. Thus, neither framework is a direct special case of the other.

\subsection{Organization of the paper}
The paper is organized as follows. In Section~\ref{Sec:preliminaries} we introduce the notation and collect the necessary background on empirical spectral distributions, noncommutative probability, and free perpetuities. In Section~\ref{Sec:existence} we establish existence and uniqueness of matrix perpetuities, discuss equivalent multiplication schemes under orthogonal invariance, and relate matrix perpetuities to their principal minors. Section~\ref{sec:tails} is devoted to tail asymptotics of the expected empirical spectral distribution of matrix perpetuities. In Section~\ref{sec:weak} we study the convergence of expected empirical spectral distributions of matrix perpetuities to the law of the corresponding free perpetuity. Appendix~\ref{app:A} contains a detailed analysis of the matrix Beta prime example and its limiting spectral distribution. Appendix~\ref{app:B} records a marginal property of the matrix Beta prime distribution for principal submatrices. Finally, Appendix~\ref{sec:C} provides the measurable-selection argument used to construct a measurable polar decomposition.

\section{Preliminaries}\label{Sec:preliminaries}
\begin{notation}
Throughout the paper, bold lowercase letters, such as ${\bf x}$, denote deterministic matrices, whereas bold uppercase letters, such as ${\bf X}$, denote random matrices. The $N \times N$ identity matrix is denoted by $\id_N$.

We use the following matrix spaces:
\begin{align*}
\mathrm{Mat}(N):= \mathbb{R}^{N \times N}, \qquad 
\mathrm{Sym}(N) := \{ {\bf x} \in \mathrm{Mat}(N) \colon {\bf x} = {\bf x}^\top \}.
\end{align*}

The cones of positive semidefinite and positive definite matrices are denoted by
\begin{align*}
\mathrm{Sym}_{\geq 0}(N) 
&:= \{ {\bf x} \in \mathrm{Sym}(N) \colon {\bf x} \geq 0 \}, \\
\mathrm{Sym}_{> 0}(N) 
&:= \{ {\bf x} \in \mathrm{Sym}(N) \colon {\bf x} > 0 \}.
\end{align*}
Here, ${\bf x} \geq 0$ means that ${\bf x}$ is positive semidefinite, while ${\bf x} > 0$ means that ${\bf x}$ is positive definite.

The orthogonal group is denoted by
\[
O(N) := \{ {\bf u} \in \mathrm{Mat}(N) \colon {\bf u}{\bf u}^\top = \id_N \}.
\]
For ${\bf x} \in \mathrm{Mat}(N)$, we denote by $\mathrm{Tr}({\bf x})$ its trace and by $\mathrm{tr}({\bf x})$ its normalized trace, that is,
\[
\mathrm{tr}({\bf x}) = \frac{1}{N}\mathrm{Tr}({\bf x}).
\]
\end{notation}

\begin{defin}
We say that the distribution of an $N\times N$ real random matrix $\Yb$ is invariant under the action of the orthogonal group $O(N)$ if, for any orthogonal matrix ${\bf u}$, we have
    \[
    {\bf u}\Yb {\bf u}^\top\stackrel{d}{=}\Yb.
    \]    
More generally we say that a pair of random matrices $(\Ab,\Bb)$ is orthogonally invariant if for every orthogonal matrix $\ub$, we have
\[(\Ab,\Bb)\stackrel{d}{=}\left(\ub\Ab\ub^\top,\ub\Bb\ub^\top\right).\]
\end{defin}

\begin{defin}\label{def:spect}\ 
    For any $N\times N$ real  random matrix $\Yb$ with eigenvalues $(\lambda_k(\Yb))_{k=1}^N$, we define the empirical spectral distribution and the expected empirical spectral distribution by $\hat{\mu}_{\Yb}$ and $ \mu_{\Yb}$ respectively, by 
    \[
    \hat{\mu}_\Yb = \frac{1}{N}\sum_{k=1}^N \delta_{\lambda_k(\Yb)},\qquad \mu_{\Yb}=\E[\hat{\mu}_\Yb], 
    \]
    where $\lambda_k(\Yb)$ for $k=1,\ldots,N$ are the eigenvalues of $\Yb$ listed with multiplicities.

\end{defin}

\subsection{Noncommutative probability}

A noncommutative probability space is a pair $(\mathcal{A},\tau)$ where $\mathcal{A}$ is a $*$-algebra, and $\tau$ is a faithful, normal, tracial state. 

We will use two noncommutative probability spaces. In the $W^*$-setting,  $\mathcal{A}$ is a finite von Neumann algebra, $\tau$ is faithful, normal tracial state, and $\tcA$ denotes the algebra of affiliated operators. We also use the random-matrix space, of random matrices whose entries have all moments finite, and $\tau=\E\otimes \mathrm{tr}$.

For fixed, selfadjoint elements $\A_1,\ldots,\A_n\in \mathcal{A}$ their joint distribution is a functional $\mu_{\A_1,\ldots,\A_n}:\C \langle X_1,\ldots,X_N \rangle\to \C$, which for any  noncommutative polynomial $P\in \C \langle X_1,\ldots,X_N \rangle$ is defined via
\[\mu_{\A_1,\ldots,\A_n}(P)=\tau(P(\A_1,\ldots,\A_n)).\]

Let $\left(\Ab_1^{(N)},\ldots,\Ab_n^{(N)}\right)_{N\geq 1}$ be a sequence of $n$-tuples of random matrices, where $N$-th term in this sequence consists of $N\times N$ matrices. We say that $\left(\Ab_1^{(N)},\ldots,\Ab_n^{(N)}\right)_{N\geq 1}$ converges in noncommutative distribution to a tuple $(\A_1,\ldots,\A_n)$ in $(\mathcal{A},\tau)$ if
for every noncommutative polynomial $P\in\C\langle X_1,\ldots,X_{n}\rangle$ we have
\[
\lim_{N\to\infty}\E[\mathrm{tr}(P(\Ab^{(N)}_1,\ldots,\Ab_n^{(N)}))]= \tau(P(\A_1,\ldots,\A_n)).
\]

\subsection{Free perpetuities}

A free perpetuity is a self-adjoint operator \(\X\) affiliated with
\(\mathcal{A}\) satisfying the affine fixed-point equation
\begin{align}\label{eq:affine}
\X \stackrel{d}{=} \A^{1/2}\, \X \,\A^{1/2} + \B,
\qquad \X\mbox{ and }
(\A,\B)\ \text{are $*$-free},
\end{align}
where \(\A,\B\in\mathcal{A}\), $\A=\A^\ast\geq 0$ and \(\B=\B^\ast\neq 0\). For simplicity, we work only with bounded coefficients \(\A,\B\in\mathcal{A}\), while \cite{FreePerp} allows \(\A\) and \(\B\) to be unbounded operators affiliated with \(\mathcal{A}\).

\begin{thm}\label{thm:uniqe_free}
    Assume $\B\geq0$ and $\B\in\cA$. If 
    \begin{itemize}
        \item $\tau(\A)<1$, or 
        \item $\tau(\A)=1$ and $\A$ is non-Dirac,
    \end{itemize} 
    then there exists a unique solution to  \eqref{eq:affine}.
\end{thm}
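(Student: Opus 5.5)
The plan is to build the solution by iterating the free affine map and to get uniqueness by a contraction argument. Take a sequence $(\A_k,\B_k)_{k\geq1}$ of $*$-free copies of $(\A,\B)$ in a large enough tracial $W^*$-probability space. Set $\Pi_0=\mathbf 1$ and $\Pi_k=\A_1^{1/2}\cdots\A_k^{1/2}$, and consider the ``backward'' partial sums
\[
\X_n=\sum_{k=0}^{n-1}\Pi_k\,\B_{k+1}\,\Pi_k^\ast .
\]
Since $\B\geq0$, every summand is positive, so $(\X_n)$ is an increasing sequence of positive operators. Moreover $\X_n$ satisfies $\X_{n+1}\stackrel{d}{=}\A^{1/2}\X_n\A^{1/2}+\B$ with $\X_n$ free from $(\A,\B)$. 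To see this, shift the indices by one: $\X_{n+1}=\A_1^{1/2}\X_n'\A_1^{1/2}+\B_1$, where $\X_n'$ is built from $(\A_k,\B_k)_{k\geq2}$. In distributional terms this reads $\mu_{\X_{n+1}}=(\mu_\A\boxtimes\mu_{\X_n})\boxplus\mu_\B$.

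\emph{Subcritical existence.} Here the key computation is that freeness and traciality give
\[
\tau(\Pi_k\B_{k+1}\Pi_k^\ast)=\tau(\A)^k\tau(\B).
\]
This follows inductively, since $\tau(\A^{1/2}\mathbf{Y}\A^{1/2})=\tau(\A)\tau(\mathbf{Y})$ whenever $\mathbf{Y}$ is free from $\A$. Hence $\tau(\X_n)\leq \tau(\B)/(1-\tau(\A))$ uniformly in $n$. An increasing sequence of positive operators with bounded trace converges in $L^1(\tau)$, hence in measure, to a positive affiliated $\X$. Passing to the limit in the shifted identity, using continuity of free convolutions under convergence in distribution, shows that $\X$ solves \eqref{eq:affine}.

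\emph{Uniqueness.} Let $\Yb$ be any solution and iterate the equation $n$ times with free coefficients. This gives $\Yb\stackrel{d}{=}\X_n+\Pi_n\Yb'\Pi_n^\ast$ with $\Yb'\stackrel d=\Yb$ free from everything else. It therefore suffices to show that $\Pi_n\Yb'\Pi_n^\ast\to0$ in measure for every fixed affiliated $\Yb'$. After cutting $\Yb'$ by a spectral projection of trace close to $1$, this reduces to showing that $\Pi_n^\ast\Pi_n\to0$ in measure, i.e. that $\mu_\A^{\boxtimes n}\to\delta_0$ weakly. In the subcritical case this is immediate from $\tau(\Pi_n\Pi_n^\ast)=\tau(\A)^n\to0$.

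\emph{Critical case, uniqueness.} When $\tau(\A)=1$ and $\A$ is non-Dirac, I would invoke the known behaviour of free multiplicative powers (Haagerup--M\"oller, Kargin, Tucci). For a non-Dirac law the $S$-transform satisfies $S_{\mu_\A^{\boxtimes n}}=S_{\mu_\A}^n$, and at interior points $|S_{\mu_\A}|$ differs from $1$ in the direction that forces the mass of $\mu_\A^{\boxtimes n}$ in $[\varepsilon,\infty)$ to vanish. This still gives $\mu_\A^{\boxtimes n}\to\delta_0$, and the uniqueness argument above applies unchanged.

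\emph{Critical case, existence (main obstacle).} This is where I expect the difficulty to lie, because $\tau(\X_n)=n\tau(\B)$ now diverges and the $L^1$ bound is lost. My first attempt would be to prove tightness of the increasing family $\mu_{\X_n}$ through a functional that grows slower than the trace, for instance $\tau(\log(\mathbf 1+\X_n))$, or the value of the Cauchy/$\eta$-transform at a fixed negative point. I would propagate such a bound through the recursion $\mu_{\X_{n+1}}=(\mu_\A\boxtimes\mu_{\X_n})\boxplus\mu_\B$ using subordination functions.

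Alternatively, I would solve the fixed-point equation directly at the level of transforms. Combining $S$-transform multiplicativity with additive subordination yields an analytic self-consistent equation, and non-Diracness of $\A$ should supply the strict inequality needed for this equation to admit a solution that is the transform of a probability measure on $[0,\infty)$. Such a solution then dominates all $\mu_{\X_n}$ in the stochastic order, which gives tightness. Once tightness is established, monotone convergence in measure and the same limiting argument as in the subcritical case finish the proof.
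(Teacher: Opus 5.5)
Your subcritical construction is right, and it is the same series argument the paper runs on the free side in the proof of Theorem~\ref{thm:matmodel}. For the present statement the paper gives no proof of its own; it only cites \cite{FreePerp} (Theorems~4.6(ii), 3.15 and~4.2). The genuine gap is the critical case $\tau(\A)=1$. There you give no existence proof, only candidate strategies, and this is exactly the nontrivial content of the second bullet: $\tau(\X_n)=n\tau(\B)\to\infty$, so a new mechanism is needed to show that the increasing family $\mu_{\X_n}$ is tight. Your critical-case uniqueness inherits this gap, because it needs $\X_n$ to converge in measure. The other pieces are sound. Once a positive solution $\X$ is known to exist, $\X\stackrel{d}{=}\X_n+\Pi_n\X'\Pi_n^\ast\geq\X_n$ gives tightness. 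Your $S$-transform step can also be made rigorous: $S_{\mu_\A}>1$ on its domain for non-Dirac $\mu_\A$ with mean $1$. Hence $\chi_{\mu_\A^{\boxtimes n}}(t)=\frac{t}{1+t}S_{\mu_\A}(t)^n\to-\infty$, which forces $\psi_{\mu_\A^{\boxtimes n}}(z)\to0$ for every $z<0$, and therefore $\mu_\A^{\boxtimes n}([\varepsilon,\infty))\to0$.

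Moreover, both routes you sketch for the critical case start from $\mu_{\X_{n+1}}=(\mu_\A\boxtimes\mu_{\X_n})\boxplus\mu_\B$, which is false in the generality of the theorem. $\X$ is only free from the \emph{pair} $(\A,\B)$, and $\A,\B$ may be dependent, so $\A^{1/2}\X\A^{1/2}$ and $\B$ are not free. In the paper's beta-prime example $\B=\A$, and $\A^{1/2}\X\A^{1/2}+\A=\A^{1/2}(\X+\mathbf 1)\A^{1/2}$ has law $\mu_\A\boxtimes\mu_{\X+\mathbf 1}$. Its variance exceeds that of $(\mu_\A\boxtimes\mu_\X)\boxplus\mu_\A$ by $2\tau(\X)\mathrm{Var}(\A)$. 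A transform-level argument has to keep the joint law of $(\A,\B)$, e.g.\ via conditional expectation onto $W^*(\A,\B)$. This leads to a subordination identity of the form $\tau\bigl((z-\A^{1/2}\X\A^{1/2}-\B)^{-1}\bigr)=\tau\bigl((z-\B-\omega(z)\A)^{-1}\bigr)$, and critical-case existence has to be extracted from an equation of this type. In the subcritical case the same error is harmless: pass to the limit directly in the operator identity $\X_{n+1}=\A_1^{1/2}\X_n'\A_1^{1/2}+\B_1$. $L^1(\tau)$ convergence survives multiplication by the bounded $\A_1^{1/2}$, and the limit of $\X_n'$ stays free from $(\A_1,\B_1)$.
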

The case $\tau(\A)<1$ follows from \cite[Theorem 4.6(ii)]{FreePerp}, while the critical case $\tau(\A)=1$ follows from \cite[Theorem 3.15]{FreePerp}. Uniqueness is proved in \cite[Theorem 4.2]{FreePerp}.

\section{Matrix fixed-point affine equation}\label{Sec:existence}
Let $\Ab$ and $\Bb$ be square $N\times N$ random matrices with real entries. We consider an affine stochastic equation of the form
\begin{align}\label{eq:matperp}
\Xb\stackrel{d}{=}\Ab \Xb \Ab^\top+\Bb,\qquad (\Ab,\Bb)\mbox{ and }\Xb\mbox{ are independent},
\end{align}
where $\mathbf{A}^\top$ denotes the transpose of a matrix $\mathbf{A}$. 

To analyze this equation, we reformulate the problem in the usual vector setting, see \cite{BDM}. By vectorization of an $N\times N$ matrix ${\bf x}=[x_1,\ldots,x_N]\in \mathrm{Mat}(N)$ we mean the vector
\[\mathrm{vec}(\xb)=\begin{bmatrix}
    x_1\\\vdots\\x_N
\end{bmatrix} \in\R^{N^2}.\]
Matrix equation \eqref{eq:matperp} is equivalent to the following
\begin{align}\label{eq:vecperp}
\mathrm{vec}(\Xb) \stackrel{d}{=} \tilde{\Ab}\mathrm{vec}(\Xb)+\mathrm{vec}(\Bb),\qquad (\tilde{\Ab},\mathrm{vec}(\Bb))\mbox{ and }\mathrm{vec}(\Xb)\mbox{ are independent},
\end{align}
where $\tilde{\Ab}=\Ab\otimes\Ab$ with $\otimes$ denoting the Kronecker product. Using this approach, we can establish conditions on the distributions of $\Ab$ and $\Bb$ under which a unique solution to \eqref{eq:matperp} exists. These conditions do not impose structural assumptions on the joint distribution of the pair $(\Ab,\Bb)$.

\subsection{Existence and uniqueness of matrix perpetuities}
For $a>0$, define $\log^+(a)=\max\{\log(a),0\}$. Let $\lambda_{\max}(\ab)$ denote the largest eigenvalue of a symmetric matrix $\ab$. 
\begin{thm}\label{thm:matrixperp}
Let $(\Ab_n,\Bb_n)_{n\geq 1}$ be iid copies of $(\Ab,\Bb)$. If 
\begin{align}\label{eq:log_moments}
\E\left[ \log^+\left(\lambda_{\max}(\Ab\Ab^\top)\right)\right]<+\infty\quad\mbox{and}\quad \E\left[ \log^+\left(\lambda_{\max}(\Bb\Bb^\top)\right)\right]<+\infty 
\intertext{and}
\gamma=\inf_{n\geq 1} \frac{1}{n}\E\left[ \log( \lambda_{\max}\left(\Ab_1\Ab_2\ldots\Ab_n \Ab_n^\top\ldots\Ab_{2}^\top\Ab_1^\top\right))\right] <0, \label{eq:topL}
\end{align}
then there exists a unique solution to \eqref{eq:matperp}. 

Moreover, this solution has a series representation
\begin{align}\label{eq:series1}
\Xb\stackrel{d}{=} \sum_{n=1}^\infty \Ab_1\ldots \Ab_{n-1}\Bb_n\Ab_{n-1}^\top\ldots\Ab_1^\top,
\end{align}
where the series on the right-hand side above converges almost surely, with $(\Ab_n,\Bb_n)_{n\ge1}$  iid copies of $(\Ab,\Bb)$. 

Finally, letting
\[
\Xb_n=\Ab_n \Xb_{n-1}\Ab_n^\top+\Bb_n,\qquad n\ge1,
\]
where $\Xb_0$ is independent of $(\Ab_n,\Bb_n)_{n\ge1}$, we have 
\[
\Xb_n \stackrel{d}{\longrightarrow} \Xb.
\]
\end{thm}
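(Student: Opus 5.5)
The plan is to reduce everything to the classical vector perpetuity theory through the vectorized form \eqref{eq:vecperp}, with $\tilde{\Ab}=\Ab\otimes\Ab$ and $\underline{B}=\mathrm{vec}(\Bb)$. The standard result (Bougerol--Picard; see \cite[Theorem 4.1.4]{BDM}) says that if $\E[\log^+\|\tilde{\Ab}\|]<\infty$, $\E[\log^+|\underline{B}|]<\infty$ and the top Lyapunov exponent of $(\tilde{\Ab}_n)$ is negative, then the series $\sum_{n\ge1}\tilde{\Ab}_1\cdots\tilde{\Ab}_{n-1}\underline{B}_n$ converges almost surely. Its limit is the unique solution of \eqref{eq:vecperp}, and the forward iterates converge in distribution to it for any initial vector independent of the coefficients. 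Devectorizing via $(\ab_1\otimes\ab_1)\cdots(\ab_{n-1}\otimes\ab_{n-1})\mathrm{vec}(\bb)=\mathrm{vec}(\ab_1\cdots\ab_{n-1}\bb\,\ab_{n-1}^\top\cdots\ab_1^\top)$ then gives \eqref{eq:series1} and the convergence $\Xb_n\stackrel{d}{\to}\Xb$. Uniqueness passes directly to \eqref{eq:matperp}, because $\mathrm{vec}$ is a linear bijection and independence is preserved.

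The steps are as follows.

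First, check the moment conditions. Use the operator norm on $\R^{N^2}$ and the identity $\|\ab\otimes\ab\|=\|\ab\|^2=\lambda_{\max}(\ab\ab^\top)$. This gives $\E\log^+\|\tilde{\Ab}\|<\infty$ directly from \eqref{eq:log_moments}. For $\underline{B}$, use $|\mathrm{vec}(\bb)|=\|\bb\|_F\le\sqrt N\|\bb\|=\sqrt{N\lambda_{\max}(\bb\bb^\top)}$, so $\log^+|\underline{B}|\le \tfrac12\log N+\tfrac12\log^+\lambda_{\max}(\Bb\Bb^\top)$, which is integrable.

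Second, identify the Lyapunov exponents. By the mixed-product property, $\tilde{\Ab}_1\cdots\tilde{\Ab}_n=(\Ab_1\cdots\Ab_n)\otimes(\Ab_1\cdots\Ab_n)$. Hence
\[
\|\tilde{\Ab}_1\cdots\tilde{\Ab}_n\|=\|\Ab_1\cdots\Ab_n\|^2=\lambda_{\max}\bigl(\Ab_1\cdots\Ab_n\Ab_n^\top\cdots\Ab_1^\top\bigr).
\]
Therefore the vector top exponent $\inf_n\frac1n\E\log\|\tilde{\Ab}_1\cdots\tilde{\Ab}_n\|$ coincides exactly with $\gamma$ in \eqref{eq:topL}, and it is negative by hypothesis. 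The infimum equals the limit by subadditivity (Furstenberg--Kesten), with the value $-\infty$ allowed.

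Third, handle convergence of the iterates. The forward iterate is $\Xb_n=\Ab_n\cdots\Ab_1\Xb_0\Ab_1^\top\cdots\Ab_n^\top+\sum_k(\cdots)$. It has the same law as the backward iterate $\Ab_1\cdots\Ab_n\Xb_0(\cdots)^\top+\sum_{k\le n}\Ab_1\cdots\Ab_{k-1}\Bb_k(\cdots)^\top$. In the backward iterate, the first term is bounded by $\|\Ab_1\cdots\Ab_n\|^2\|\Xb_0\|$, which tends to $0$ almost surely since $\frac1n\log\|\Ab_1\cdots\Ab_n\|^2\to\gamma<0$. The second term converges almost surely to the series.

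The main obstacle is the Lyapunov identification together with the degenerate case. One has to confirm that the cited theorem needs only $\gamma<0$, possibly $-\infty$, with no irreducibility assumption. One must also handle the possibility that $\Ab$ is singular, where $\log\lambda_{\max}$ can be $-\infty$. I would cite the Bougerol--Picard sufficiency direction, which requires only these conditions.

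For uniqueness, I would argue directly. If $\Xb'$ is any solution, iterating with an independent copy shows $\Xb'\stackrel{d}{=}\Xb_n$ started from $\Xb_0=\Xb'$. By the third step this sequence converges in distribution to the series, so $\Xb'$ has the law of the series.
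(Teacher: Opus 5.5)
Your proposal is correct and follows the paper's proof. Both vectorize via $\tilde{\Ab}=\Ab\otimes\Ab$, identify $\|\tilde{\Ab}_1\cdots\tilde{\Ab}_n\|=\lambda_{\max}(\Ab_1\cdots\Ab_n\Ab_n^\top\cdots\Ab_1^\top)$, bound $|\mathrm{vec}(\Bb)|\le\sqrt{N\lambda_{\max}(\Bb\Bb^\top)}$, and invoke \cite[Theorem 4.1.4]{BDM}. The only difference is that you write out the backward-iteration argument for $\Xb_n\stackrel{d}{\to}\Xb$ and for uniqueness, which the paper simply takes from the same cited theorem.
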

\begin{proof}
Let $d=N^2$.  We define a norm $|\cdot|$ on $\R^d$ by  $|\mathrm{vec}(\mathbf{x})|=\sqrt{\mathrm{Tr}(\mathbf{x}\mathbf{x}^\top)}$ for any $N\times N$ matrix $\mathbf{x}$. The operator norm of $\textbf{a}\otimes\textbf{a}$ induced by $|\cdot|$, where $\mathbf{a}$ is an $N\times N$ matrix, is given by  
\[
\|\textbf{a}\otimes\textbf{a}\| = \sup_{\mathbf{x}\neq 0} \frac{|(\textbf{a}\otimes\textbf{a})\mathrm{vec}(\mathbf{x})|}{|\mathrm{vec}(\mathbf{x})|} = \sqrt{
\sup_{\mathbf{x}\neq 0} \frac{\mathrm{Tr}(\mathbf{x}\,\mathbf{a}^\top \mathbf{a}\,\mathbf{x}^\top\,\mathbf{a}^\top\mathbf{a})}{\mathrm{Tr}(\mathbf{x}\mathbf{x}^\top)}
}=
\lambda_{\max}(\mathbf{a}^\top \mathbf{a}),
\]
this can be derived as follows: diagonalize $\mathbf{a}^\top \mathbf{a}=\mathbf{u} \mathbf{d} \mathbf{u}^\top$ and substitute $\mathbf{x}=\mathbf{u}^\top \mathbf{y} \mathbf{u}$, where $\mathbf{u}$ is an orthogonal matrix and $\mathbf{d}=\mathrm{diag}(d_1,\ldots,d_N)$ is a diagonal matrix with $d_1\geq\ldots\geq d_N\geq 0$. Then $\mathrm{Tr}(\mathbf{ydy}^T\mathbf{d})=\sum_{i,j} d_i d_j y_{ij}^2$, and 
$\mathrm{Tr}(\mathbf{yy}^T)=\sum_{i,j} y_{ij}^2$, and the quotient is maximized by $d_1^2$.

By \cite[Theorem 4.1.4]{BDM}, a unique solution to \eqref{eq:vecperp} exists if
 \[
\E[\log^+(\|\tilde \Ab\|)]<+\infty\quad\mbox{and}\quad \E[\log^+(|\mathrm{vec}(\Bb)|)]<+\infty.
\]
and
\[
\gamma:=\inf_{n\geq 1} \frac{1}{n}\E\left[ \log(\|\tilde\Ab_1\cdots \tilde\Ab_n\|)\right] <0.
\]
Note that the above result does not depend on the choice of the norm $|\cdot|$

Using the explicit expressions for our choice of $|\cdot|$ and $\|\cdot\|$, and the fact that
\[
\E\left[ \log^+\left(\mathrm{Tr}(\Bb\Bb^\top)\right)\right]\leq \E\left[ \log^+\left(N\lambda_{\max}(\Bb\Bb^\top)\right)\right]\leq \E\left[ \log^+\left(\lambda_{\max}(\Bb\Bb^\top)\right)\right] + \log(N),
\]
we obtain the first part of the assertion. Furthermore, by the second part of \cite[Theorem 4.1.4]{BDM}, the solution to \eqref{eq:vecperp} has a series representation
\begin{align}\label{eq:series2}
\mathrm{vec}(\Xb)\stackrel{d}{=} \sum_{n=1}^\infty \tilde\Ab_1\ldots \tilde\Ab_{n-1}\mathrm{vec}(\Bb_n),
\end{align}
where $(\tilde{\Ab}_n,\mathrm{vec}(\Bb_n))_n$ are iid copies of $(\tilde{\Ab},\mathrm{vec}(\Bb))$, and the series on the right-hand side above converges almost surely.
Clearly, both series representations \eqref{eq:series1} and \eqref{eq:series2} are equivalent.

Applying the convergence part of \cite[Theorem 4.1.4]{BDM} to the vectorized recursion 
\[
\mathrm{vec}(\Xb_n) =\tilde{\Ab}_n\mathrm{vec}(\Xb_{n-1})+\mathrm{vec}(\Bb_n),
\]
we obtain $\mathrm{vec}(\Xb_n)\stackrel{d}{\longrightarrow} \mathrm{vec}(\Xb)$; hence $\Xb_n\stackrel{d}{\longrightarrow} \Xb$.
\end{proof}

If $\Bb=\id_N-\Ab\Ab^\top$ a.s., then $\Xb\stackrel{d}{=}\id_N$ is a solution to \eqref{eq:matperp}. To prevent such a degeneracy of the model, the notion of so-called irreducibility is introduced, see \cite{BP92}. 
\begin{defin}\label{def:irred}
    We say that an affine subspace $H$ of $\mathrm{Mat}(N)$ is $(\Ab,\Bb)$-invariant if 
\[
\{\Ab \mathbf{x}\Ab^\top+\Bb\colon \mathbf{x}\in H\}\subset H\quad\mbox{a.s.}
\]
We say that the model \eqref{eq:matperp} is irreducible if the entire space $\mathrm{Mat}(N)$ is the only $(\Ab,\Bb)$-invariant subspace.
\end{defin}
 If the model is non-irreducible, then $(\Ab,\Bb)$ acts (as described in the definition above) on a proper subspace $V$ of $\mathrm{Mat}(N\times N)$, and there exists a (non-random) element $\mathbf{x}_0\in V^\bot$ for which $\Ab \mathbf{x}_0\Ab^\top+\Bb\mid_{V}=\mathbf{x}_0$ almost surely, \cite[Lemma 4.1.1]{BDM}.  Under irreducibility, a partial converse to Theorem \ref{thm:matrixperp} can be obtained; see \cite[Theorem 4.1.6]{BDM}.
\begin{lemma}
    Assume that condition \eqref{eq:log_moments} holds and that the model \eqref{eq:matperp} is irreducible. If there exists a solution to \eqref{eq:matperp}, then $\gamma$, as defined in \eqref{eq:topL}, is negative.
\end{lemma}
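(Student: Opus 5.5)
The plan is to reduce the statement to the vector setting through the vectorization used in the proof of Theorem~\ref{thm:matrixperp}, and then invoke the converse part of Kesten-type theory, \cite[Theorem 4.1.6]{BDM}. That theorem says roughly the following. For the affine recursion $x\mapsto \tilde{\Ab}x+\mathrm{vec}(\Bb)$ on $\R^d$ with $d=N^2$, assume the log-moment conditions hold and the recursion is irreducible, meaning no proper affine subspace of $\R^d$ is a.s.\ invariant. If a stationary solution exists, then the top Lyapunov exponent of $(\tilde{\Ab}_n)$ is negative.

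The proof has four steps.

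\textbf{Step 1 (matching equations).} As noted before, $\Xb$ solves \eqref{eq:matperp} if and only if $\mathrm{vec}(\Xb)$ solves \eqref{eq:vecperp}. So a solution of \eqref{eq:matperp} yields a stationary law for the vector recursion.

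\textbf{Step 2 (moment conditions).} Equip $\R^d$ with the Frobenius norm $|\mathrm{vec}(\xb)|=\sqrt{\mathrm{Tr}(\xb\xb^\top)}$. The computation in the proof of Theorem~\ref{thm:matrixperp} gives
\[
\|\tilde{\Ab}\|=\lambda_{\max}(\Ab^\top\Ab)=\lambda_{\max}(\Ab\Ab^\top).
\]
We also have $|\mathrm{vec}(\Bb)|^2=\mathrm{Tr}(\Bb\Bb^\top)\le N\lambda_{\max}(\Bb\Bb^\top)$. Hence \eqref{eq:log_moments} implies $\E\log^+\|\tilde{\Ab}\|<\infty$ and $\E\log^+|\mathrm{vec}(\Bb)|<\infty$. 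These are the integrability hypotheses required there.

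\textbf{Step 3 (irreducibility).} Vectorization is a linear isomorphism $\mathrm{Mat}(N)\to\R^d$. It therefore maps affine subspaces to affine subspaces, and it intertwines $\xb\mapsto \Ab\xb\Ab^\top+\Bb$ with $x\mapsto\tilde{\Ab}x+\mathrm{vec}(\Bb)$. Consequently an affine subspace $H$ is $(\Ab,\Bb)$-invariant in the sense of Definition~\ref{def:irred} exactly when $\mathrm{vec}(H)$ is invariant for the vector recursion. So irreducibility transfers directly.

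\textbf{Step 4 (identifying $\gamma$).} Applying \cite[Theorem 4.1.6]{BDM} gives
\[
\inf_n \tfrac1n\E\log\|\tilde{\Ab}_1\cdots\tilde{\Ab}_n\|<0.
\]
It remains to show this equals $\gamma$ in \eqref{eq:topL}. By the mixed-product rule for Kronecker products,
\[
\tilde{\Ab}_1\cdots\tilde{\Ab}_n=(\Ab_1\cdots\Ab_n)\otimes(\Ab_1\cdots\Ab_n).
\]
The norm identity from Step 2 then gives
\[
\|\tilde{\Ab}_1\cdots\tilde{\Ab}_n\|=\lambda_{\max}\bigl(\Ab_1\cdots\Ab_n\Ab_n^\top\cdots\Ab_1^\top\bigr),
\]
so the two exponents coincide. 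Since the Lyapunov exponent does not depend on the choice of norm, using the Frobenius-induced norm causes no loss.

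\textbf{Main obstacle.} The delicate step is making sure the hypotheses of \cite[Theorem 4.1.6]{BDM} are matched exactly. In particular, its notion of irreducibility must be the affine-subspace condition of Step 3, not a linear-subspace condition on $\tilde{\Ab}$ alone. If the cited theorem instead requires, say, a condition on the support of $\mathrm{vec}(\Bb)$, I would derive it from the remark after Definition~\ref{def:irred} via \cite[Lemma 4.1.1]{BDM}, again transported through vectorization. The case $\gamma=-\infty$ is harmless, since the conclusion only asks for $\gamma<0$.
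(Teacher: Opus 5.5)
Your proposal is correct and follows the paper's route. The paper obtains this lemma by applying the converse \cite[Theorem 4.1.6]{BDM} to the vectorized equation \eqref{eq:vecperp}, transporting the moment conditions, the irreducibility, and the identity $\|\tilde{\Ab}_1\cdots\tilde{\Ab}_n\|=\lambda_{\max}(\Ab_1\cdots\Ab_n\Ab_n^\top\cdots\Ab_1^\top)$ through $\tilde{\Ab}=\Ab\otimes\Ab$ exactly as in the proof of Theorem~\ref{thm:matrixperp}, which is what your four steps spell out. If the cited converse is phrased for a causal strictly stationary solution of the recursion, such a solution comes from starting the chain $\mathrm{vec}(\Xb_n)$ at the law of $\mathrm{vec}(\Xb)$, so nothing further is needed.
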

Although Theorem \ref{thm:matrixperp} provides a sufficient condition for existence and yields a series representation of a perpetuity, it generally does not lead to an explicit description of its distribution. Explicit distributions are rare. We present two known examples where existence follows from the theorem above and for which the distribution can be computed explicitly. We leave the verification of the assumptions to the reader.

\begin{example}\label{ex:1}
Define $d_N=N^{-1}\dim\mathrm{Sym}(N)=(N+1)/2$.
The matrix beta-prime distribution with parameters $\alpha,\beta>d_N-1$ on the cone of positive definite matrices $\mathrm{Sym}_{\geq0}(N)$, is defined by its density
\begin{align}\label{eq:B'N}
\mathcal{B}'_{\alpha,\beta}(N)(\dd\mathbf{x}) = \frac{1}{\mathrm{B}_N(\alpha,\beta)} \frac{\det(\mathbf{x})^{\alpha-d_N}}{\det(\id_N+\mathbf{x})^{\alpha+\beta}}I_{\mathrm{Sym}_{\geq0}(N)}(\mathbf{x})\dd\mathbf{x},
\end{align}
where $\dd\mathbf{x}$ is the Lebesgue measure on $\mathrm{Sym}(N)$ endowed with the inner product $\langle \xb,\yb\rangle=\mathrm{Tr}(\xb \yb)$, and $\mathrm{B}_N(\alpha,\beta)$ is the normalizing constant. 
In \cite[Section 5]{ABO} the authors showed that
    if $\Ab\sim \mathcal{B}'_{\alpha,\alpha+\beta}(N)$ with $\alpha,\beta>d_N-1$ then, $\Xb \sim \mathcal{B}'_{\alpha,\beta}(N)$ is the unique solution to the following stochastic fixed-point equation
\begin{align}\label{eq:BetaPerp}
\Xb \stackrel{d}{=} \Ab^{1/2} \Xb\Ab^{1/2}+\Ab,\qquad \Ab\mbox{ and }\Xb \mbox{ are independent}.
\end{align}
\end{example}

\begin{example}
On $\mathrm{Sym}(N)$, we define the Wishart distribution with parameters $p>d_N-1$ and $\ab\in\mathrm{Sym}_{>0}(N)$ by 
    \begin{align}\label{eq:Wishart}
    \cW_{p,\ab}(N)(\dd \xb) = \frac{\det(\ab)^p}{\Gamma_N(p)}\det(\xb)^{p-d_N} e^{-\mathrm{Tr}(\ab\xb)}I_{\mathrm{Sym}_{>0}(N)}(\xb)\dd\xb
    \end{align}
and the matrix generalized inverse Gaussian distribution with parameters $p\in\R$, $\ab,\bb\in\mathrm{Sym}_{>0}(N)$ by
     \begin{align}\label{eq:GIG}
    \mathcal{G}_{p,\ab,\bb}(N)(\dd \xb) = \frac{1}{K_{N,p}(\ab,\bb)} \det(\xb)^{p-d_N} e^{-\mathrm{Tr}(\ab\xb)-\mathrm{Tr}(\bb\xb^{-1})}I_{\mathrm{Sym}_{>0}(N)}(\xb)\dd\xb,
    \end{align}
where $\Gamma_{N}(p)$ and $K_{N,p}(\ab,\bb)$ are normalizing constants. 

According to \cite{LW00} (see also \cite{KMY} for extension to other symmetric cones), 
if $\Xb\sim \mathcal{G}_{-p,\ab,\bb}(N)$ and $\Yb\sim \cW_{p,\ab}(N)$ are independent, then 
\[
\mathbf{V}:=\Xb^{-1}-(\Xb+\Yb)^{-1}\sim \cW_{p,\bb}. 
\]
Thus, if $\ab=\bb$, then $\Yb\stackrel{d}{=}\mathbf{V}$, which by Hua's identity implies that
\[
\Yb^{-1}\stackrel{d}{=}\mathbf{V}^{-1} = \Xb \Yb^{-1}\Xb+\Xb, 
\]
providing another example of an equation of the form \eqref{eq:matperp} with an explicit solution. 

A generalization of this example to Jordan algebras was recently considered in \cite[Theorem 2.4]{MY2024}.
\end{example}

\section{Orthogonal invariance}
\subsection{Multiplication schemes}
Next we observe that in the case of orthogonally invariant models, one can equivalently consider different multiplication schemes than the one in \eqref{eq:matperp}.
It is customary to denote $(\Ab \Ab^\top)^{1/2}$ by $|\Ab^\top|$. In a slight abuse of notation, we will instead write this quantity as $|\Ab|$.

\begin{defin}
   By $m\colon \mathrm{Mat}(N)\to \mathrm{Mat}(N)$ we denote a measurable function satisfying $m(\ab)m(\ab)^\top = \ab \ab^\top$. 
\end{defin}

Examples of $m$ include $m_1(\ab) = (\ab \ab^\top)^{1/2}$ and $m_2(\ab) = T_{\ab \ab^\top}$, where $T_\bb$ is lower triangular matrix from the Cholesky decomposition of $\bb=T_{\bb}T_{\bb}^\top$. 

\begin{lemma}\label{lem:m_polar}
Let $\Ab$ be a random real matrix. Then there exists a $\sigma(\Ab)$-measurable orthogonal matrix $\Ub$ such that
\[
\Ab = m(\Ab)\Ub.
\]
\end{lemma}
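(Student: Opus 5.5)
We prove a deterministic statement: there is a Borel map $u\colon \mathrm{Mat}(N)\to O(N)$ with $\ab = m(\ab)\,u(\ab)$ for every $\ab\in\mathrm{Mat}(N)$. Then $\Ub:=u(\Ab)$ is $\sigma(\Ab)$-measurable, since it is a composition of the Borel map $u$ with the random variable $\Ab$. First we check that the fibres are nonempty. Because $m(\ab)m(\ab)^\top=\ab\ab^\top$, the Gram matrices of the columns of $m(\ab)^\top$ and of $\ab^\top$ coincide. Equivalently, $\|m(\ab)^\top x\|=\|\ab^\top x\|$ for all $x\in\R^N$. Hence the map $m(\ab)^\top x\mapsto \ab^\top x$ is a well-defined linear isometry from $\mathrm{ran}(m(\ab)^\top)$ onto $\mathrm{ran}(\ab^\top)$. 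These two subspaces have equal dimension, so the isometry extends to an orthogonal map $\vb$ of $\R^N$. This gives $\vb\, m(\ab)^\top=\ab^\top$, and transposing yields $\ab = m(\ab)\,\vb^\top$. Therefore the set
\[
F(\ab):=\{\ub\in O(N)\colon \ab = m(\ab)\ub\}
\]
is nonempty for every $\ab$.

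Next we choose from these fibres measurably. $F(\ab)$ is a closed subset of the compact set $O(N)$. Its graph
\[
\{(\ab,\ub)\in \mathrm{Mat}(N)\times O(N)\colon \ab-m(\ab)\ub=0\}
\]
is Borel, because $(\ab,\ub)\mapsto \ab-m(\ab)\ub$ is Borel (here we use that $m$ is measurable). A closed-valued correspondence into a compact metric space with Borel graph admits a Borel selector. One route is to show that the correspondence is weakly measurable and then apply the Kuratowski--Ryll-Nardzewski theorem. For compact-valued maps with Borel graph, weak measurability can be obtained from the fact that $\{\ab: F(\ab)\cap K\neq\emptyset\}$ is the projection of a Borel set with compact sections, and such projections are Borel by the Novikov/Arsenin--Kunugui theorem. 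Alternatively, one can invoke the von Neumann--Aumann selection theorem to get a universally measurable selector, which is enough after completing $\sigma(\Ab)$ under $\P$. This selection step is the one technical point, and it is what Appendix~\ref{sec:C} is meant to handle.

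A more hands-on alternative avoids the general theorems. Use a Borel SVD-type construction: on the Borel sets where $\mathrm{rank}(\ab)=r$, take Gram--Schmidt applied to columns chosen by a fixed lexicographic rule to build orthonormal bases of $\mathrm{ran}(\ab^\top)$ and $\mathrm{ran}(m(\ab)^\top)$ that correspond under the isometry above. Then complete both to bases of $\R^N$ by Gram--Schmidt on the standard basis vectors, again chosen by a lexicographic rule. Each step is Borel on each of the finitely many pieces, so $u$ is Borel. I expect the measurable selection to be the only real obstacle; the algebra is routine.
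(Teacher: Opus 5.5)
Your proof is correct, but it takes a different route from the paper's.

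\textbf{The paper's route.} The paper never works with the fibres $\{\ub\in O(N)\colon \ab=m(\ab)\ub\}$ directly. It applies the measurable polar decomposition of Lemma~\ref{lem:measurable} twice, $\Ab=|\Ab|\,\Ub_1$ and $m(\Ab)=|m(\Ab)|\,\Ub_2$. It then observes that $m(\Ab)m(\Ab)^\top=\Ab\Ab^\top$ forces $|m(\Ab)|=|\Ab|$, and sets $\Ub=\Ub_2^\top\Ub_1$. This reduction means the selection problem is solved only for the \emph{continuous} map $\mathbf m\mapsto|\mathbf m|$. Its graph $\{(\mathbf m,\vb)\colon \mathbf m=|\mathbf m|\,\vb\}$ is closed, so projections of its closed pieces with compact fibres are closed, weak measurability is elementary, and Kuratowski--Ryll-Nardzewski finishes. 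The merely Borel map $m$ enters only by composition, $\Ub_2=f(m(\Ab))$, which costs nothing.

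\textbf{Your direct route.} In your approach the graph is only Borel, so you genuinely need Novikov's theorem. That theorem says Borel sets with compact sections have Borel projections, and in fact Borel uniformizations, so the appeal to Kuratowski--Ryll-Nardzewski becomes redundant. Your remark that this step is ``what Appendix~\ref{sec:C} is meant to handle'' is therefore not accurate. Appendix~\ref{sec:C} treats only the closed-graph case, and the factorization through $|\Ab|$ is exactly what lets the paper avoid your harder selection step. The von Neumann--Aumann variant also gives only completion-measurability, which you would repair by passing to a Borel version a.e.

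\textbf{What your approach buys.} Your isometry-extension argument is a clean, self-contained proof that the fibres are nonempty. Your Gram--Schmidt construction is the most elementary route of all: it yields an explicit Borel $u$ with no selection theorem. The one point you should make explicit there is why the two orthonormal bases correspond under the isometry. Select the same index set $S$ for the rows of $\ab$ and of $m(\ab)$. Independence of $\{\ab^\top e_j\}_{j\in S}$ is equivalent to nonsingularity of the principal submatrix of $\ab\ab^\top$ indexed by $S$, which is the same condition for $m(\ab)$. Moreover, the Gram--Schmidt coefficients depend only on the common Gram matrix $\ab\ab^\top$, so the two bases are matched vector by vector.
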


\begin{proof}
By Lemma \ref{lem:measurable}, there exist orthogonal matrices $\Ub_1,\Ub_2$, measurable with respect to $\sigma(\Ab)$ and $\sigma(m(\Ab))$ respectively, such that
\[
\Ab=|\Ab|\,\Ub_1,
\qquad
m(\Ab)=|m(\Ab)|\,\Ub_2.
\]
Since $m(\Ab)m(\Ab)^\top=\Ab\Ab^\top$, we have $|m(\Ab)|=|\Ab|$. Hence
\[
\Ab=|\Ab|\,\Ub_1 = |m(\Ab)|\,\Ub_1 = m(\Ab)\Ub_2^\top \Ub_1.
\]
Because $m$ is measurable, $\sigma(m(\Ab))\subset \sigma(\Ab)$, so $\Ub_2^\top \Ub_1$ is $\sigma(\Ab)$-measurable. Therefore, setting
\[
\Ub:=\Ub_2^\top \Ub_1,
\]
we obtain the claim.
\end{proof}

\subsection{Principal minors and multiplicative convolution}

We begin with a simple observation which, when specialized to the Cholesky factorization, has useful consequences for the distributions of principal minors.

\begin{proposition}\label{prop:390}
Let $\Ab$ and $\Bb$ be independent. Assume that $\Bb$ is orthogonally invariant. Then, 
\[
\Ab \Bb \Ab^\top  \stackrel{d}{=} m(\Ab)\Bb m(\Ab)^\top. 
\]
\end{proposition}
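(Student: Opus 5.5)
The plan is to use Lemma \ref{lem:m_polar} to write $\Ab = m(\Ab)\Ub$, where $\Ub$ is a $\sigma(\Ab)$-measurable orthogonal matrix. Then
\[
\Ab\Bb\Ab^\top = m(\Ab)\,\bigl(\Ub\Bb\Ub^\top\bigr)\,m(\Ab)^\top .
\]
So it suffices to show that the pair $(m(\Ab),\Ub\Bb\Ub^\top)$ has the same joint law as $(m(\Ab),\Bb)$. The conclusion then follows by applying the measurable map $(\mathbf{c},\mathbf{y})\mapsto \mathbf{c}\,\mathbf{y}\,\mathbf{c}^\top$ to both pairs.

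To prove the equality of joint laws, I will condition on $\Ab$. Since $\Bb$ is independent of $\Ab$, and both $\Ub$ and $m(\Ab)$ are $\sigma(\Ab)$-measurable, the following holds for bounded measurable $f$ on $\mathrm{Mat}(N)\times\mathrm{Mat}(N)$:
\[
\E\bigl[f(m(\Ab),\Ub\Bb\Ub^\top)\bigr]=\E\bigl[g(\Ab)\bigr],\qquad g(\mathbf{a}):=\E\bigl[f(m(\mathbf{a}),\mathbf{u}(\mathbf{a})\Bb\,\mathbf{u}(\mathbf{a})^\top)\bigr].
\]
Here $\Ub=\mathbf{u}(\Ab)$ for a measurable map $\mathbf{u}$, and the identity is the standard freezing (substitution) lemma for independent variables. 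By orthogonal invariance of $\Bb$, for each fixed $\mathbf{a}$ we have $\mathbf{u}(\mathbf{a})\Bb\,\mathbf{u}(\mathbf{a})^\top\stackrel{d}{=}\Bb$. Hence
\[
g(\mathbf{a})=\E\bigl[f(m(\mathbf{a}),\Bb)\bigr].
\]
Applying the freezing lemma in reverse gives $\E[g(\Ab)]=\E[f(m(\Ab),\Bb)]$, which is the required equality of joint laws.

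The only delicate point is measurability. We need $\Ub$ to be a measurable function of $\Ab$ so that the freezing lemma applies, and this is exactly what Lemma \ref{lem:m_polar} provides via Lemma \ref{lem:measurable}. We also need joint measurability of $(\mathbf{a},\mathbf{y})\mapsto f(m(\mathbf{a}),\mathbf{u}(\mathbf{a})\mathbf{y}\mathbf{u}(\mathbf{a})^\top)$, which is immediate from the measurability of $m$ and $\mathbf{u}$. Beyond this the argument is routine.
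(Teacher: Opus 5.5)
Your proposal is correct and follows essentially the same route as the paper. Both write $\Ab = m(\Ab)\Ub$ via Lemma \ref{lem:m_polar} and condition on $\Ab$. Independence and orthogonal invariance of $\Bb$ then let you replace $\Ub\Bb\Ub^\top$ by $\Bb$ jointly with $m(\Ab)$; the paper does the same jointly with $\Ab$ itself. Your freezing-lemma formulation simply spells out rigorously the conditioning step that the paper writes informally as $(\Ub_{\ab}\Bb\Ub_{\ab}^\top \mid \Ab=\ab)\stackrel{d}{=}\Bb$.
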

\begin{proof}

By Lemma \ref{lem:m_polar}, there exists a $\sigma(\Ab)$-measurable orthogonal matrix $\Ub_\Ab$ such that $\Ab = m(\Ab) \Ub_{\Ab}$.
Consider 
\[
\Ab \Bb \Ab^\top = m(\Ab) \Ub_{\Ab} \Bb \Ub_{\Ab}^\top m(\Ab)^\top. 
\]
Note that we have $(\Ab, \Ub_\Ab \Bb \Ub_{\Ab}^\top)\stackrel{d}{=}(\Ab,\Bb)$. Indeed, 
\[
(\Ub_\Ab \Bb \Ub_{\Ab}^\top\mid \Ab=\ab )= \Ub_\ab \Bb \Ub_{\ab}^\top\stackrel{d}{=} \Bb.
\]
Thus, we obtain 
\[
\Ab \Bb \Ab^\top  \stackrel{d}{=} m(\Ab)\Bb m(\Ab)^\top. 
\]
\end{proof}

\begin{notation}
For any $N\times N$ matrix $\xb$ and $k=1,\ldots,N$, we denote by $\xb^{[k]}$ the $k\times k$ principal submatrix of $\xb$, i.e., $\xb^{[k]} = (\xb_{ij})_{i,j\leq k}$.
\end{notation}

\begin{Corollary}[Compression identity]
\label{prop:39}
Let $\Ab$ and $\Bb$ be independent. Assume that $\Bb$ is orthogonally invariant. Then, for $k=1,\ldots,N$,
\[
(\Ab \Bb \Ab^\top)^{[k]}\stackrel{d}{=}   ((\Ab \Ab^\top)^{[k]})^{1/2} \Bb^{[k]} ((\Ab \Ab^\top)^{[k]})^{1/2}.
\]
\end{Corollary}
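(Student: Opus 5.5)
We apply Proposition~\ref{prop:390} with the Cholesky choice $m=m_2$, i.e.\ $m(\ab)=T_{\ab\ab^\top}$, the lower triangular factor with $T_{\ab\ab^\top}T_{\ab\ab^\top}^\top=\ab\ab^\top$. This gives
\[
\Ab\Bb\Ab^\top\stackrel{d}{=}\Tb\,\Bb\,\Tb^\top,\qquad \Tb:=T_{\Ab\Ab^\top},
\]
where $\Tb$ is $\sigma(\Ab)$-measurable and hence independent of $\Bb$. Taking $k\times k$ principal submatrices of both sides preserves equality in distribution, since $\xb\mapsto\xb^{[k]}$ is a fixed measurable map.

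The key point is that lower triangularity makes compression multiplicative. Write $\Tb$ in block form with blocks of sizes $k$ and $N-k$. The upper right block is zero, so the first $k$ rows of $\Tb$ are $[\Tb^{[k]}\ 0]$. Consequently
\[
(\Tb\Bb\Tb^\top)^{[k]}=\Tb^{[k]}\Bb^{[k]}(\Tb^{[k]})^\top,
\qquad
(\Ab\Ab^\top)^{[k]}=(\Tb\Tb^\top)^{[k]}=\Tb^{[k]}(\Tb^{[k]})^\top .
\]
The second identity shows that $\Tb^{[k]}$ is a square-root factor of $(\Ab\Ab^\top)^{[k]}$, in the sense that $\Tb^{[k]}(\Tb^{[k]})^\top=(\Ab\Ab^\top)^{[k]}$. (When $\Ab\Ab^\top$ is singular, one should fix a measurable Cholesky factor; the block identity still holds for any lower triangular factor.)

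It remains to replace $\Tb^{[k]}$ by $((\Ab\Ab^\top)^{[k]})^{1/2}$. We use Proposition~\ref{prop:390} a second time, now in dimension $k$. Take the $k\times k$ random matrix $\Tb^{[k]}$, which is $\sigma(\Ab)$-measurable, together with $\Bb^{[k]}$, which is independent of it. We need $\Bb^{[k]}$ to be orthogonally invariant in $O(k)$. This holds because for $\vb\in O(k)$ the block-diagonal matrix $\ub=\mathrm{diag}(\vb,\id_{N-k})\in O(N)$ satisfies $(\ub\Bb\ub^\top)^{[k]}=\vb\Bb^{[k]}\vb^\top$. Applying Proposition~\ref{prop:390} with $m=m_1$ yields
\[
\Tb^{[k]}\Bb^{[k]}(\Tb^{[k]})^\top\stackrel{d}{=}\bigl(\Tb^{[k]}(\Tb^{[k]})^\top\bigr)^{1/2}\Bb^{[k]}\bigl(\Tb^{[k]}(\Tb^{[k]})^\top\bigr)^{1/2}=((\Ab\Ab^\top)^{[k]})^{1/2}\Bb^{[k]}((\Ab\Ab^\top)^{[k]})^{1/2}.
\]
Chaining the three equalities proves the claim.

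The only delicate step is measurability of the Cholesky factor when $\Ab\Ab^\top$ is only positive semidefinite. I would handle it as the paper does for polar decompositions: apply measurable selection (Lemma~\ref{lem:measurable}) to the closed set of lower triangular factors. Alternatively, a continuity/limit argument using $\Ab\Ab^\top+\eps\id_N$ would work. Everything else is block-matrix algebra.
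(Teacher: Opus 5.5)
Your proposal is correct and follows the paper's proof essentially step for step: Proposition~\ref{prop:390} with the Cholesky choice of $m$, the identity $(\mathbf{t}\xb\mathbf{t}^\top)^{[k]}=\mathbf{t}^{[k]}\xb^{[k]}(\mathbf{t}^{[k]})^\top$ for lower triangular $\mathbf{t}$, and a second application of Proposition~\ref{prop:390} in dimension $k$ using the orthogonal invariance of $\Bb^{[k]}$. The only difference is minor and in your favour: you use just $\mathbf{T}^{[k]}(\mathbf{T}^{[k]})^\top=(\Ab\Ab^\top)^{[k]}$, whereas the paper identifies $T_{\Ab\Ab^\top}^{[k]}=T_{(\Ab\Ab^\top)^{[k]}}$, which relies on uniqueness of the Cholesky factor and so holds cleanly only when $\Ab\Ab^\top$ is invertible.
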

\begin{proof}
By Proposition \ref{prop:390}, we have 
\[
\Ab \Bb \Ab^\top  \stackrel{d}{=} m(\Ab)\Bb m(\Ab)^\top. 
\]
Take now $m(\Ab)=T_{\Ab \Ab^\top}$, where $T_{\ab}$ is the lower triangular matrix from the Cholesky decomposition of $\ab=T_{\ab}T_{\ab}^\top$. 

Let $\xb^{[k]}:=(\xb_{ij})_{i,j\leq k}$ be the $k\times k$ leading principal submatrix of $\xb$. It is well known that  
\begin{align}\label{eq:chol_prop}
({\bf t} \xb {\bf t}^\top)^{[k]} = {\bf t}^{[k]} \xb^{[k]} ({\bf t}^{[k]})^\top
\end{align}
for any lower triangular matrix $\bf t$ and any matrix $\xb$. Thus, we obtain 
\[
(\Ab \Bb \Ab^\top)^{[k]}\stackrel{d}{=} T_{\Ab \Ab^\top}^{[k]} \Bb^{[k]} (T_{\Ab \Ab^\top}^{[k]})^\top
\]
By \eqref{eq:chol_prop}
\[
T_{\Ab \Ab^\top}^{[k]}=T_{(\Ab \Ab^\top)^{[k]}}.
\]
Since $\Bb^{[k]}$ is also orthogonally invariant, we have 
\[
 T_{\Ab \Ab^\top}^{[k]} \Bb^{[k]} (T_{\Ab \Ab^\top}^{[k]})^\top \stackrel{d}{=}  ((\Ab \Ab^\top)^{[k]})^{1/2} \Bb^{[k]} ((\Ab \Ab^\top)^{[k]})^{1/2}
\]
by Proposition \ref{prop:390}.
\end{proof}

\begin{remark}
Let
\[
C_k:\mathrm{Sym}(N)\to \mathrm{Sym}(k),
\qquad
C_k(\ab):=\ab^{[k]},
\]
and write \((C_k)_\#\mu\) for the
push-forward of a probability measure \(\mu\) under \(C_k\).

For probability measures \(\mu,\nu\) on \(\mathrm{Sym}(N)\), with
\(\mu\) supported on positive semidefinite matrices, define
\(\mu\circledast\nu\) as the law of
\[
\Ab^{1/2}\Bb\Ab^{1/2},
\]
where \(\Ab\sim\mu\), \(\Bb\sim\nu\), and \(\Ab,\Bb\) are independent.

Suppose now that \(\nu\) is orthogonally invariant, that is,
\[
\mathrm{Law}(\Bb)=\mathrm{Law}(\ub\Bb\ub^\top),
\qquad \ub\in O(N).
\]
Then Corollary~\ref{prop:39} implies that, for every \(k=1,\ldots,N\),
\[
(C_k)_\#(\mu\circledast\nu)
=
(C_k)_\#\mu \circledast (C_k)_\#\nu,
\]
where the convolution on the right-hand side is understood in dimension
\(k\).

Thus the principal compression map \(C_k\) is compatible with the
symmetric multiplicative convolution \(\circledast\). In particular, on
the class of orthogonally invariant measures, \(C_k\) acts as a
homomorphism with respect to \(\circledast\).
\end{remark}

\begin{remark} The preceding finite-dimensional compatibility has an asymptotic analogue. 
It is well known (see \cite{NicaSpeicherMult,ShlyakhtenkoTao}) that if we have a sequence of orthogonally invariant matrices $\Ab_N\in\mathrm{Mat}(N)$ such that the empirical spectral distribution (ESD) $\mu_{\Ab_N}$ converges weakly to $\mu$, then for any fixed $t>1$ the ESD of $t\,\Ab_N^{[k]}$, with $k=\lfloor N/t\rfloor$, converges to the fractional free convolution power $\mu^{\boxplus t}$.

Moreover, assume that we have a sequence of  orthogonally invariant random matrices $\Bb_N$ independent of $\Ab_N$, whose ESD converges to $\nu$. From the previous Corollary we obtain
\[
(\Ab_N^{1/2}\Bb_N\Ab_N^{1/2})^{[k]}
\stackrel{d}{=}
(\Ab_N^{[k]})^{1/2}\Bb_N^{[k]}(\Ab_N^{[k]})^{1/2}.
\]
As $N\to\infty$, the ESD of 
\[
t\,(\Ab_N^{1/2}\Bb_N\Ab_N^{1/2})^{[k]}
\]
converges to $(\mu\boxtimes\nu)^{\boxplus t}$. On the other hand, multiplying the right-hand side of the above equality by $t^2$, we obtain
\[
(t\Ab_N^{[k]})^{1/2}\,t\Bb_N^{[k]}\,(t\Ab_N^{[k]})^{1/2},
\]
whose ESD converges to $\mu^{\boxplus t}\boxtimes\nu^{\boxplus t}$. Therefore,
\[
\mu^{\boxplus t}\boxtimes\nu^{\boxplus t}
=
(\mu\boxtimes\nu)^{\boxplus t}\circ D_{1/t},
\]
where $\mu\circ D_r(A)=\mu(rA)$ denotes dilation of a measure. This equality was proved in \cite{Remarkable}, Proposition~3.5, in the context of a semigroup of homomorphisms with respect to multiplicative free convolution.

\end{remark}

\subsection{The top Lyapunov exponent}

The constant $\gamma$ defined in Theorem \ref{thm:matrixperp} is known as the top Lyapunov exponent, a fundamental object in understanding the behavior of many dynamical systems. Orthogonal invariance often greatly simplifies the analysis of matrix products; in the present setting, the tools developed above lead to an elementary proof of a known formula for the top Lyapunov exponent.

\begin{lemma}\label{lem:lyap} 
Let $\gamma$ be the top Lyapunov exponent defined in \eqref{eq:topL}. 
    If $\ub\Ab\Ab^\top \ub^\top\stackrel{d}{=}\Ab\Ab^\top$ for any orthogonal matrix $\ub$, then 
\[
\gamma = \E[\log(\Ab \Ab^\top)_{11}].
\]
\end{lemma}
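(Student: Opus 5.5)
The plan is to show that $\log\|\Ab_1\cdots\Ab_n\Ab_n^\top\cdots\Ab_1^\top\|$ differs from $\sum_{i=1}^n\log(\Ab_i\Ab_i^\top)_{11}$ only by $O(1)$ in expectation. Here $\|\cdot\|=\lambda_{\max}$, and the $O(1)$ may depend on $N$ but not on $n$.

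\emph{Step 1: replace the largest eigenvalue by a diagonal entry.} Let $\mathbf P_n=\Ab_1\cdots\Ab_n$. Instead of $\lambda_{\max}(\mathbf P_n\mathbf P_n^\top)$, consider $(\mathbf P_n^\top\mathbf P_n)_{11}=|\mathbf P_n e_1|^2$. This has the same largest eigenvalue as $\mathbf P_n\mathbf P_n^\top$, and the infimum in \eqref{eq:topL} is subadditive, so it is a limit. We need the comparison
\[
(\mathbf P_n^\top\mathbf P_n)_{11}\le \lambda_{\max}(\mathbf P_n\mathbf P_n^\top)\le \mathrm{Tr}(\mathbf P_n^\top\mathbf P_n)=\sum_j |\mathbf P_n e_j|^2 .
\]

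\emph{Step 2: telescope with orthogonal invariance.} Write $v_0=e_1$ and set
\[
|\mathbf P_n e_1|^2=\prod_{i=n}^{1}\frac{|\Ab_i w_{i}|^2}{|w_i|^2},\qquad w_n=e_1,\quad w_{i-1}=\Ab_i w_i .
\]
That is, we multiply from the right end, starting from $\Ab_n$. Conditionally on $\Ab_{i+1},\dots,\Ab_n$, the unit vector $w_i/|w_i|$ is deterministic. The quantity $|\Ab_i u|^2=u^\top\Ab_i^\top\Ab_i u$ involves $\Ab_i^\top\Ab_i$, whereas the hypothesis concerns $\Ab\Ab^\top$. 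To fix this, I would run the product in the transposed order: $\lambda_{\max}(\mathbf P_n\mathbf P_n^\top)=\lambda_{\max}(\mathbf Q_n^\top \mathbf Q_n)$ with $\mathbf Q_n=\mathbf P_n^\top=\Ab_n^\top\cdots\Ab_1^\top$. Then $|\mathbf Q_n e_1|^2$ telescopes with factors $u^\top\Ab_i\Ab_i^\top u$, where $u$ is a unit vector depending only on $\Ab_1,\dots,\Ab_{i-1}$. By invariance of $\Ab\Ab^\top$ and independence, each such factor has the law of $(\Ab\Ab^\top)_{11}$. (Alternatively, use Proposition~\ref{prop:390}-type reasoning.) Hence
\[
\E\log|\mathbf Q_n e_1|^2=n\,\E[\log(\Ab\Ab^\top)_{11}],
\]
which gives $\gamma\ge \E\log(\Ab\Ab^\top)_{11}$.

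\emph{Step 3: upper bound.} The same computation applies to each $e_j$, since invariance under permutation matrices makes every $(\Ab\Ab^\top)_{jj}$ equal in law to the $(1,1)$ entry. Here, however, I need $\E\log\sum_j|\mathbf Q_ne_j|^2$, and the logarithm of a sum is not a sum of logarithms. I would use invariance once more. Let $\mathbf O$ be Haar-distributed and independent of everything. Then $\mathbf Q_n\mathbf O$ has the same distribution pattern, and $|\mathbf Q_n \mathbf O e_1|^2=|\mathbf Q_n\theta|^2$ with $\theta$ uniform on the sphere. By the same telescoping, $\E\log|\mathbf Q_n\theta|^2=n\E\log(\Ab\Ab^\top)_{11}$.

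For a fixed matrix $\mathbf q$, the uniform-vector lower bound $\E_\theta\log|\mathbf q\theta|^2\ge \log\lambda_{\max}(\mathbf q^\top\mathbf q)+\E\log\theta_1^2$ holds. Indeed, $|\mathbf q\theta|^2\ge \lambda_{\max}\langle\theta,v\rangle^2$, where $v$ is the top eigenvector, and $\E\log\theta_1^2=-c_N>-\infty$. Taking expectations gives
\[
\E\log\lambda_{\max}\le n\E\log(\Ab\Ab^\top)_{11}+c_N .
\]
Dividing by $n$ yields $\gamma\le\E\log(\Ab\Ab^\top)_{11}$. This step in fact makes Step 1's trace bound unnecessary. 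Combined with the lower bound, we obtain equality.

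\emph{Main obstacle.} The delicate point is the conditioning in Step 2. The direction $u$ must be chosen measurably and be independent of $\Ab_i$, and the expectations must be well defined; they may equal $-\infty$, which requires care with $\log$ of possibly zero entries. I would handle integrability via the $\log^+$ moment \eqref{eq:log_moments}, allowing $\gamma=-\infty$ on both sides.
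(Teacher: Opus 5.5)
Your proof is correct. It establishes the same two-sided bound as the paper,
\[
n\,\E[\log(\Ab\Ab^\top)_{11}]\le \E[\log\lambda_{\max}(\mathbf P_n\mathbf P_n^\top)]\le n\,\E[\log(\Ab\Ab^\top)_{11}]+c_N,\qquad c_N=-\E[\log\theta_1^2],
\]
but you obtain the multiplicativity of the $(1,1)$ entry by a different mechanism.

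The paper first proves by induction that $\mathbf M_n=\Pi_n\Pi_n^\top$ is orthogonally invariant; this uses the measurable polar decomposition of Lemma~\ref{lem:measurable}, and $\mathbf M_n$ is equal in law to your $\mathbf P_n\mathbf P_n^\top$. It then factors $(\mathbf M_n)_{11}\stackrel{d}{=}\prod_i(\Ab_i\Ab_i^\top)_{11}$ by iterating the Cholesky-based compression identity (Corollary~\ref{prop:39} with $k=1$). For the upper bound it uses $\mathbf M_n\stackrel{d}{=}\Ub\Lambda\Ub^\top$ with an independent Haar $\Ub$.

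You instead telescope $|\Ab_n^\top\cdots\Ab_1^\top v|^2$ along the normalized direction determined by the past. This needs only the rank-one fact $u^\top\Ab\Ab^\top u\stackrel{d}{=}(\Ab\Ab^\top)_{11}$ for a fixed unit $u$ (Remark~\ref{rem:rank-one}), applied conditionally on the past. The factors then come out iid, and starting from an independent uniform $\theta$ removes any need for invariance of $\mathbf M_n$. This is essentially the classical cocycle argument for isotropic products, transposed so that invariance of $\Ab\Ab^\top$ rather than $\Ab^\top\Ab$ is what enters.

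Your route is more self-contained. No polar decomposition, Cholesky factor or measurable selection is needed, since $u=v/|v|$ is explicitly measurable. Any convention works once $v=0$, because the telescoped product already contains a zero factor at that point.

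The paper's route is built for reuse. Invariance of $\mathbf M_n$ together with the compression identity for $k\times k$ minors immediately gives $\det(\mathbf M_n^{[k]})\stackrel{d}{=}\prod_i\det\bigl((\Ab_i\Ab_i^\top)^{[k]}\bigr)$ and the formula for $h_k$ in Lemma~\ref{lem:hi_invariance}. Extending your telescoping beyond $k=1$ would instead require tracking $k$-frames, i.e.\ exterior powers, which the paper deliberately avoids.
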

\begin{remark}
    In \cite{Newman86} (see also \cite[Appendix C]{ABO}), the author considered orthogonal invariance of the form $\ub\Ab^\top\Ab \ub^\top\stackrel{d}{=}\Ab^\top\Ab$ for any orthogonal matrix $\ub$ and proved that in such case 
    \[
\gamma = \E[\log(\Ab^\top \Ab)_{11}].
\]
We propose a new, elementary method to prove such result. 
\end{remark}
\begin{proof}[Proof of Lemma~\ref{lem:lyap}]


Assume that ${\bf m}\in\mathrm{Sym}_{\geq0}(N)$ with eigenvalues $\lambda_i({\bf m})$ for $i=1,\ldots,N$. Then,
\[
{\bf m}_{11} = e_1^\top  {\bf m} e_1\leq \max_{x\in\R^N\colon x^\top x =1} x^\top {\bf m} x =\lambda_{\max}({\bf m}).
\]

Assume that ${\bf M}$ is orthogonally invariant. Then ${\bf M}\stackrel{d}{=} \mathbf{U}{\bf \Lambda}\mathbf{U}^\top$, where ${\bf \Lambda}$ is a diagonal matrix of eigenvalues of $M$, with ${\bf \Lambda}_{ii}=\lambda_i({\bf M})$ with $\lambda_i({\bf M})\geq\lambda_{i+1}({\bf M})$, $\mathbf{U}$ is a Haar orthogonal matrix and $\mathbf{U}$ and ${\bf \Lambda}$ are independent. 
Moreover,  
\[
{\bf M}_{11} = e_1^\top  {\bf M} e_1 \stackrel{d}{=} (\mathbf{U}^\top e_1)^\top {\bf \Lambda} \,\mathbf{U}^\top  e_1 = \sum_{j=1}^N \lambda_j({\bf M}) \mathbf{U}_{1j}^2\geq \lambda_{\max}({\bf M}) \mathbf{U}_{11}^2.
\]
Thus, 
\begin{align}\label{eq:3.12}
\E[\log {\bf M}_{11}]\leq \E[\log \lambda_{\max}({\bf M})]\leq \E[\log {\bf M}_{11} -\log \mathbf{U}_{11}^2],
\end{align}
where $\mathbf{U}^2_{11}\sim \mathrm{Beta}(1/2,(N-1)/2)$ and so $\E[\log \mathbf{U}_{11}^2] = \Psi(1/2)-\Psi(N/2)$, where $\Psi$ is the digamma function. 

Recall that $\gamma=\inf_{n\geq 1} n^{-1}\E\left[ \log( \lambda_{\max}\left(\Pi_n^\top \Pi_n \right))\right]$, where $\Pi_n = \Ab_n\Ab_{n-1}\ldots\Ab_1$. Since $\lambda_{\max}(\ab{\bf b}) = \lambda_{\max}({\bf b} \ab)$, we have 
\begin{align}\label{eq:newdefgamma}
\gamma = \inf_{n\geq 1} n^{-1}\E\left[ \log( \lambda_{\max}\left( {\bf M}_n \right))\right],\qquad \mbox{ with }{\bf M}_n= \Pi_n\Pi_n^\top.
\end{align}

Since ${\bf M}_n=\Ab_n{\bf M}_{n-1}\Ab_n^\top$ with ${\bf M}_1=\Ab_1\Ab_1^\top$, we prove by induction that ${\bf M}_n$ is orthogonally invariant for each $n$. The case $n=1$ follows from the assumption on $\Ab_1\Ab_1^\top$.
Now assume that ${\bf M}_{n-1}$ is orthogonally invariant. Write the polar decomposition (it is well defined by Lemma \ref{lem:measurable})
\[
\Ab_n = |\Ab_n|\,\Ub_{\Ab_n},
\qquad |\Ab_n| := (\Ab_n\Ab_n^\top)^{1/2},
\]
where $\Ub_{\Ab_n}$ is orthogonal. Then for any deterministic orthogonal matrix $\ub$,
\begin{align*}
\ub {\bf M}_n \ub^\top
&= \ub |\Ab_n|\,\Ub_{\Ab_n} {\bf M}_{n-1} \Ub_{\Ab_n}^\top |\Ab_n| \ub^\top \\
&= (\ub |\Ab_n|\ub^\top)\,(\ub \Ub_{\Ab_n}) {\bf M}_{n-1} (\ub \Ub_{\Ab_n})^\top\,(\ub |\Ab_n|\ub^\top) \\
&\stackrel{d}{=} (\ub |\Ab_n|\ub^\top)\,{\bf M}_{n-1}\,(\ub |\Ab_n|\ub^\top),
\end{align*}
since ${\bf M}_{n-1}$ is orthogonally invariant and independent of $\Ab_n$, hence also independent of $\Ub_{\Ab_n}$.
Next,
\[
\ub |\Ab_n| \ub^\top
= \ub (\Ab_n\Ab_n^\top)^{1/2}\ub^\top
= (\ub \Ab_n\Ab_n^\top \ub^\top)^{1/2}.
\]
Therefore, by the orthogonal invariance of $\Ab_n\Ab_n^\top$, we have $\ub |\Ab_n| \ub^\top \stackrel{d}{=} |\Ab_n|$.
Combining this with the independence of ${\bf M}_{n-1}$ and $\Ab_n$, we obtain
\[
\ub {\bf M}_n \ub^\top
\stackrel{d}{=} |\Ab_n|\,{\bf M}_{n-1}\,|\Ab_n| \stackrel{d}{=}{\bf M}_n,
\]
so ${\bf M}_n$ is orthogonally invariant.

Then, we apply Corollary \ref{prop:39} with $k=1$ successively to obtain 
\[
({\bf M}_n)_{11}\stackrel{d}{=}\prod_{i=1}^n (\Ab_i \Ab_i^\top)_{11}. 
\]
Thus we get
\[\E[\log ({\bf M}_n)_{11}]=n\,\E[\log (\Ab \Ab^\top)_{11}]\]
and the assertion follows from \eqref{eq:newdefgamma} and  inequalities in \eqref{eq:3.12} applied to ${\bf M}={\bf M}_n$.

\end{proof}

\begin{remark}\label{rem:rank-one}
  The $(1,1)$-entry function in Lemma \ref{lem:lyap} can be replaced by the trace of any rank-one projection.
    Indeed, any rank-one projection $P$ can be expressed as $(\ub^\top e_1)(\ub^\top e_1)^\top$, where $e_1=(1,0\ldots,0)^\top\in\R^N$ and $\ub$ is an orthogonal matrix. Then, we have for orthogonally invariant $\Xb=\Ab\Ab^\top$, 
    \[
    \Xb_{11}\stackrel{d}{=}( \ub \Xb \ub^\top)_{11} = e_1^\top ( \ub \Xb \ub^\top)e_1  = \mathrm{Tr}( (\ub^\top e_1)(\ub^\top e_1)^\top \Xb) = \mathrm{Tr}(P\Xb).
    \]
\end{remark}

\begin{example}\label{ex:example_gamma}
Assume that $\Ab\sim \mathcal{B}'_{\alpha,\alpha+\beta}(N)$ with $\alpha,\beta>(N-1)/2$, as in Example \ref{ex:1}. We have 
$\Ab_{11}\sim\mathcal{B}'_{\alpha,\alpha+\beta-\frac{N-1}{2}}(1)$ (the classical univariate Beta-prime distribution), as shown, e.g., in \cite[Section B.2]{ABO}. From this, we obtain
\[
\gamma = \E[\log(\Ab_{11})]=\Psi(\alpha)-\Psi\left(\alpha+\beta-\tfrac{N-1}{2}\right),
\]
where $\Psi(x)=\frac{\dd}{\dd x}\log(\Gamma(x))$ is the digamma function. Since $\Psi$ is strictly increasing, we conclude that $\gamma<0$.
\end{example}

\subsection{Matrix perpetuities under orthogonal invariance}

In this subsection we note that if $\Xb$ is a solution to \eqref{eq:matperp} of size $N\times N$, then sub-matrices $\Xb^{[k]}$ of size $k\times k$ are themselves solutions to \eqref{eq:matperp}, with truncated $\Ab,\Bb$.
Let us note some further properties of the solution to \eqref{eq:matperp}. We start with an analog of Proposition \ref{prop:390}.

\begin{lemma}\label{lem:basic_prop}
Assume that $(\Ab,\Bb)$ satisfy \eqref{eq:log_moments} and \eqref{eq:topL}. Let $\Xb$ be the unique solution to \eqref{eq:matperp}. If
\[
(\ub \Ab\Ab^\top \ub^\top,\ub \Bb \ub^\top)\stackrel{d}{=}(\Ab\Ab^\top,\Bb)
\qquad\text{for every orthogonal matrix }\ub,
\]
then $\Xb$ is orthogonally invariant.
In such case, $\Xb$ also satisfies  
\[
\Xb\stackrel{d}{=} m(\Ab)\,\Xb\,m(\Ab)^\top+\Bb,
\qquad (m(\Ab),\Bb)\ \text{and }\Xb\ \text{are independent}.
\]
\end{lemma}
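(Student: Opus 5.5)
Both claims will be derived from the convergence statement of Theorem~\ref{thm:matrixperp}, combined with the polar-decomposition argument from the proof of Lemma~\ref{lem:lyap} and Proposition~\ref{prop:390}.

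\emph{Step 1: orthogonal invariance of $\Xb$.} Start the recursion $\Xb_n=\Ab_n\Xb_{n-1}\Ab_n^\top+\Bb_n$ from $\Xb_0=0$, which is orthogonally invariant. We show by induction that each $\Xb_n$ is orthogonally invariant. Assume this for $\Xb_{n-1}$, which is independent of $(\Ab_n,\Bb_n)$. Write $\Ab_n=|\Ab_n|\Ub_{\Ab_n}$, where the orthogonal factor $\Ub_{\Ab_n}$ is $\sigma(\Ab_n)$-measurable by Lemma~\ref{lem:measurable}. Conditionally on $(\Ab_n,\Bb_n)$, the matrix $\Ub_{\Ab_n}\Xb_{n-1}\Ub_{\Ab_n}^\top$ has the law of $\Xb_{n-1}$. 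Hence
\[
\Xb_n\stackrel{d}{=}|\Ab_n|\Xb_{n-1}|\Ab_n|+\Bb_n,
\]
jointly, with $\Xb_{n-1}$ independent of $(|\Ab_n|,\Bb_n)$. For a deterministic $\ub$,
\[
\ub\Xb_n\ub^\top\stackrel{d}{=}(\ub|\Ab_n|\ub^\top)(\ub\Xb_{n-1}\ub^\top)(\ub|\Ab_n|\ub^\top)+\ub\Bb_n\ub^\top.
\]
We have $\ub|\Ab_n|\ub^\top=(\ub\Ab_n\Ab_n^\top\ub^\top)^{1/2}$, and by hypothesis the pair $(\ub\Ab_n\Ab_n^\top\ub^\top,\ub\Bb_n\ub^\top)$ has the law of $(\Ab_n\Ab_n^\top,\Bb_n)$. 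By the inductive hypothesis and independence, $\ub\Xb_{n-1}\ub^\top$ can be replaced by $\Xb_{n-1}$. Therefore $\ub\Xb_n\ub^\top\stackrel{d}{=}\Xb_n$.

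By Theorem~\ref{thm:matrixperp}, $\Xb_n\stackrel{d}{\to}\Xb$. Orthogonal invariance passes to the limit, because conjugation by $\ub$ is continuous and weak limits are unique.

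\emph{Step 2: the alternative equation.} This part needs care on one point: Proposition~\ref{prop:390} as stated deals with $\Ab\Bb\Ab^\top$ alone, whereas here there is an additive term $\Bb$ correlated with $\Ab$. We therefore redo its proof jointly. By Lemma~\ref{lem:m_polar}, $\Ab=m(\Ab)\Ub$ with $\Ub$ a $\sigma(\Ab)$-measurable orthogonal matrix. Let $\Xb$ be independent of $(\Ab,\Bb)$ and orthogonally invariant (Step 1). Conditionally on $(\Ab,\Bb)=(\ab,\bb)$, the matrix $\Ub\Xb\Ub^\top=\ub_{\ab}\Xb\ub_{\ab}^\top$ has the law of $\Xb$. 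Consequently
\[
(\Ab,\Bb,\Ub\Xb\Ub^\top)\stackrel{d}{=}(\Ab,\Bb,\Xb),
\]
with the third component independent of the first two. Applying the measurable map $(\ab,\bb,\xb)\mapsto m(\ab)\xb m(\ab)^\top+\bb$ gives
\[
\Ab\Xb\Ab^\top+\Bb=m(\Ab)\,\Ub\Xb\Ub^\top\,m(\Ab)^\top+\Bb\stackrel{d}{=}m(\Ab)\Xb m(\Ab)^\top+\Bb.
\]
Combined with \eqref{eq:matperp}, this is the claimed equation, and $(m(\Ab),\Bb)$ is independent of $\Xb$.

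The main obstacle is in Step 1: the hypothesis only gives invariance of $(\Ab\Ab^\top,\Bb)$, not of $\Ab$ itself. This is why we first strip the orthogonal factor $\Ub_{\Ab_n}$ using the invariance of $\Xb_{n-1}$, and only afterwards conjugate. The ordering must be handled jointly with $\Bb_n$, and since $\Ub_{\Ab_n}$ depends only on $\Ab_n$, the conditioning argument goes through.
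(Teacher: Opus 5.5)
Your proposal is correct and follows essentially the same route as the paper. You run the Markov chain from the orthogonally invariant $\Xb_0=0$, strip the $\sigma(\Ab_n)$-measurable polar factor using the invariance of $\Xb_{n-1}$, conjugate using the invariance of $(|\Ab_n|,\Bb_n)$, and pass to the limit. For the second claim you use the same joint identity $(\Ab,\Bb,\Ub\Xb\Ub^\top)\stackrel{d}{=}(\Ab,\Bb,\Xb)$ with the factorization $\Ab=m(\Ab)\Ub$ from Lemma~\ref{lem:m_polar}.
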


\begin{proof}
Let $(\Ab_n,\Bb_n)_{n\ge1}$ be iid copies of $(\Ab,\Bb)$, independent of $\Xb_0$, and define a Markov chain
\[
\Xb_n=\Ab_n \Xb_{n-1}\Ab_n^\top+\Bb_n,\qquad n\ge1.
\]
Then, by Theorem \ref{thm:matrixperp}, 
\[
\Xb_n \stackrel{d}{\longrightarrow} \Xb,
\]
where $\Xb$ is the unique solution to \eqref{eq:matperp}.

Since the map $\ab\mapsto \ab^{1/2}$ is equivariant under orthogonal conjugation on the cone of symmetric positive definite matrices, for every orthogonal matrix $\ub$,
\[
\ub |\Ab_n| \ub^\top
= \bigl(\ub \Ab_n\Ab_n^\top \ub^\top\bigr)^{1/2}.
\]
Hence
\[
(\ub |\Ab_n| \ub^\top,\ub \Bb_n \ub^\top)\stackrel{d}{=}(|\Ab_n|,\Bb_n).
\]

Assume that $\Xb_0$ is orthogonally invariant, e.g., $\Xb_0=0$. 
We will show by induction that $\Xb_n$ is orthogonally invariant for every $n\ge1$.
Assume now that $\Xb_{n-1}$ is orthogonally invariant.
By Lemma \ref{lem:measurable}, there exists a $\sigma(\Ab_n)$-measurable orthogonal matrix $\Ub_n$ such that 
\[
\Ab_n=|\Ab_n| \Ub_n.
\]
Because $\Xb_{n-1}$ is independent of $(\Ab_n,\Bb_n)$ and $\Ub_n$ is $\sigma(\Ab_n)$-measurable, by orthogonal invariance of $\Xb_{n-1}$, we obtain
\[
(|\Ab_n|, \Bb_n, \Ub_n \Xb_{n-1}\Ub_n^\top) \stackrel{d}{=} (|\Ab_n|, \Bb_n, \Xb_{n-1}).
\]
It follows that
\[
\Xb_n
=\Ab_n \Xb_{n-1}\Ab_n^\top+\Bb_n
=|\Ab_n|\Ub_n \Xb_{n-1}\Ub_n^\top |\Ab_n|+\Bb_n
\stackrel{d}{=} |\Ab_n| \Xb_{n-1}|\Ab_n|+\Bb_n.
\]
Now let $\ub$ be any fixed orthogonal matrix. Using the orthogonal invariance of $\Xb_{n-1}$, the independence of $\Xb_{n-1}$ and $(|\Ab_n|,\Bb_n)$, and the invariance of $(|\Ab_n|,\Bb_n)$, we obtain
\begin{align*}
\ub \Xb_n \ub^\top
&\stackrel{d}{=}
\ub\bigl(|\Ab_n|\Xb_{n-1}|\Ab_n|+\Bb_n\bigr)\ub^\top\\
&=
(\ub|\Ab_n|\ub^\top)(\ub\Xb_{n-1}\ub^\top)(\ub|\Ab_n|\ub^\top)+\ub\Bb_n\ub^\top\\
&\stackrel{d}{=}
|\Ab_n| \Xb_{n-1}|\Ab_n|+\Bb_n
\stackrel{d}{=}\Xb_n.
\end{align*}
Thus $\Xb_n$ is orthogonally invariant for every $n$.

Since $\Xb_n\stackrel{d}{\longrightarrow} \Xb$, orthogonal invariance passes to the limit, and therefore
\[
\ub \Xb \ub^\top \stackrel{d}{=} \Xb
\qquad\text{for every orthogonal matrix }\ub.
\]

We now prove the second assertion. By Lemma \ref{lem:m_polar}, there exists a $\sigma(\Ab)$-measurable orthogonal matrix $\Vb_{\Ab}$ such that 
\[
\Ab = m(\Ab)\Vb_{\Ab}.
\]
Since $\Xb$ is orthogonally invariant and independent of $(\Ab,\Bb)$, we have
\[
(\Ab,\Bb,\Vb_{\Ab}\Xb \Vb_{\Ab}^\top)\stackrel{d}{=}(\Ab,\Bb,\Xb).
\]
Using \eqref{eq:matperp}, we conclude that
\begin{align*}
\Xb
&\stackrel{d}{=} \Ab \Xb \Ab^\top+\Bb=m(\Ab)\,\Vb_{\Ab}\Xb \Vb_{\Ab}^\top\,m(\Ab)^\top+\Bb\stackrel{d}{=} m(\Ab)\,\Xb\,m(\Ab)^\top+\Bb.
\end{align*}
\end{proof}

Next we note that if $\Xb$ is a perpetuity (i.e. is a solution to \eqref{eq:matperp}), then the whole family $(\Xb^{[k]})_{1\leq k\leq N}$ consists of perpetuities.
\begin{lemma}\label{lem:subperp}
Under assumptions of Lemma \ref{lem:basic_prop}, for any $1\leq k\leq N$ we have
\[
 \Xb^{[k]}\stackrel{d}{=}((\Ab\Ab^\top)^{[k]})^{1/2}\Xb^{[k]} ((\Ab\Ab^\top)^{[k]})^{1/2}+\Bb^{[k]},\qquad ((\Ab\Ab^\top)^{[k]},\Bb^{[k]})\mbox{ and }\Xb^{[k]}\mbox{ are independent.}
\]
In particular \begin{align}\label{eq:11}
 \Xb_{11}\stackrel{d}{=} (\Ab\Ab^\top)_{11}\Xb_{11}+\Bb_{11},\qquad ((\Ab\Ab^\top)_{11}, \Bb_{11})\mbox{ and }\Xb_{11}\mbox{ are independent}.
 \end{align}

\end{lemma}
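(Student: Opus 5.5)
By Lemma \ref{lem:basic_prop}, $\Xb$ is orthogonally invariant. It also satisfies the fixed-point equation with $\Ab$ replaced by any measurable $m(\Ab)$ with $m(\Ab)m(\Ab)^\top=\Ab\Ab^\top$, with $(m(\Ab),\Bb)$ independent of $\Xb$. We take $m(\Ab)=T_{\Ab\Ab^\top}$, the lower triangular Cholesky factor. Then
\[
\Xb\stackrel{d}{=} T_{\Ab\Ab^\top}\,\Xb\,T_{\Ab\Ab^\top}^\top+\Bb,
\]
where $(T_{\Ab\Ab^\top},\Bb)$ is independent of $\Xb$. Taking the leading $k\times k$ principal submatrix is linear and commutes with the distributional identity. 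By \eqref{eq:chol_prop} we therefore get
\[
\Xb^{[k]}\stackrel{d}{=} T^{[k]}_{\Ab\Ab^\top}\,\Xb^{[k]}\,(T^{[k]}_{\Ab\Ab^\top})^\top+\Bb^{[k]},
\]
where $T^{[k]}_{\Ab\Ab^\top}=T_{(\Ab\Ab^\top)^{[k]}}$. Here $\Xb^{[k]}$ is independent of $(T_{(\Ab\Ab^\top)^{[k]}},\Bb^{[k]})$.

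Next we replace the Cholesky factor by the symmetric square root $((\Ab\Ab^\top)^{[k]})^{1/2}$. Proposition \ref{prop:390} alone does not suffice, because the additive term $\Bb^{[k]}$ is coupled with the multiplier. We use the same conditioning argument as in the second part of Lemma \ref{lem:basic_prop}. Set $\mathbf{T}=T_{(\Ab\Ab^\top)^{[k]}}$ and $\mathbf{S}=((\Ab\Ab^\top)^{[k]})^{1/2}$. Since $\mathbf{T}\mathbf{T}^\top=\mathbf{S}\mathbf{S}^\top$, Lemma \ref{lem:m_polar} (applied in dimension $k$) gives an orthogonal $k\times k$ matrix $\Vb$, measurable with respect to $\sigma(\Ab\Ab^\top)$, with $\mathbf{T}=\mathbf{S}\Vb$.

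Now $\Xb^{[k]}$ is orthogonally invariant in dimension $k$: for $\ub\in O(k)$, apply the invariance of $\Xb$ with the block-diagonal orthogonal matrix $\mathrm{diag}(\ub,\id_{N-k})$. It is also independent of $(\Ab,\Bb)$. Hence
\[
(\mathbf{S},\Bb^{[k]},\Vb\Xb^{[k]}\Vb^\top)\stackrel{d}{=}(\mathbf{S},\Bb^{[k]},\Xb^{[k]}),
\]
which follows by conditioning on $(\Ab,\Bb)$ exactly as in the proof of Proposition \ref{prop:390}. Substituting into the displayed identity gives
\[
\Xb^{[k]}\stackrel{d}{=}\mathbf{S}\Xb^{[k]}\mathbf{S}+\Bb^{[k]},
\]
with $\Xb^{[k]}$ independent of $(\mathbf{S},\Bb^{[k]})$, hence of $((\Ab\Ab^\top)^{[k]},\Bb^{[k]})$. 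For $k=1$ the square root is the scalar itself and the multiplier enters squared, which yields \eqref{eq:11}.

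The main point to handle carefully is the independence bookkeeping. Once we pass to distributional identities, we must keep $\Xb$ realized as a copy independent of $(\Ab,\Bb)$, with $\Xb^{[k]}$ a function of that copy, so that the joint law needed for the conditioning step is available. Writing the right-hand side explicitly as a measurable function of an independent triple $(\Ab,\Bb,\Xb)$ throughout keeps this rigorous.
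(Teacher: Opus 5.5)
Your proof is correct and follows the same route as the paper: you pass to the Cholesky factor via Lemma~\ref{lem:basic_prop}, compress with \eqref{eq:chol_prop}, and then replace the Cholesky factor by the symmetric square root using orthogonal invariance. The only difference is that you derive the $O(k)$-invariance of $\Xb^{[k]}$ directly from that of $\Xb$ via $\mathrm{diag}(\ub,\id_{N-k})$ and redo the conditioning step, whereas the paper re-invokes Lemma~\ref{lem:basic_prop} in dimension $k$, which read literally would require re-checking its existence hypotheses; your version is therefore slightly more self-contained.
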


\begin{proof}
Let $m(\Ab) = T_{\Ab\Ab^\top}$ be the lower triangular matrix from the Cholesky decomposition of $\Ab\Ab^\top=T_{\Ab\Ab^\top}T_{\Ab\Ab^\top}^\top$. Since $(T \xb T^\top)^{[k]} = T^{[k]} \xb^{[k]} (T^{[k]})^\top$ for any lower triangular matrix $T$ and any matrix $\xb$, by Lemma \ref{lem:basic_prop}, we obtain 
\[
 \Xb^{[k]}\stackrel{d}{=}T_{\Ab\Ab^\top}^{[k]} \Xb^{[k]} (T_{\Ab\Ab^\top}^{[k]})^\top+\Bb^{[k]},\qquad (T_{\Ab\Ab^\top}^{[k]},\Bb^{[k]})\mbox{ and }\Xb^{[k]}\mbox{ are independent.}
\]
Now, note that $T_{\Ab\Ab^\top}^{[k]} (T_{\Ab\Ab^\top}^{[k]})^\top = (\Ab\Ab^\top)^{[k]} = T_{(\Ab\Ab^\top)^{[k]}} T_{(\Ab\Ab^\top)^{[k]}}^\top$. Therefore $T_{\Ab\Ab^\top}^{[k]} = T_{(\Ab\Ab^\top)^{[k]}}$.

Since orthogonal invariance of $(\Ab\Ab^\top,\Bb)$ implies orthogonal invariance of $((\Ab\Ab^\top)^{[k]},\Bb^{[k]})$, again by Lemma \ref{lem:basic_prop}, we can replace $T_{\Ab\Ab^\top}^{[k]}$ by $((\Ab\Ab^\top)^{[k]})^{1/2}$. 
\end{proof}

Assuming orthogonal invariance, we obtain a simplified version of Theorem~\ref{thm:matrixperp} under particularly simple assumptions.

\begin{Corollary}\label{cor:matrixperp_inv}
Assume that
\[
(\ub \Ab\Ab^\top \ub^\top,\ub \Bb \ub^\top)\stackrel{d}{=}(\Ab\Ab^\top,\Bb)
\qquad\text{for every orthogonal matrix }\ub.
\]
If
\begin{align*}
\gamma=\E[\log(\Ab \Ab^\top)_{11}] <0\quad\mbox{and}\quad \E\left[ \log^+(\Bb\Bb^\top)_{11}\right]<+\infty 
\end{align*}
then there exists a unique solution to \eqref{eq:matperp}. 
\end{Corollary}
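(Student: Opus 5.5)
The plan is to reduce the statement to Theorem~\ref{thm:matrixperp}: I will check the two log-moment conditions \eqref{eq:log_moments} and the negativity of the top Lyapunov exponent \eqref{eq:topL} under the stated invariance. The Lyapunov part is immediate. The marginal invariance $\ub\Ab\Ab^\top\ub^\top\stackrel{d}{=}\Ab\Ab^\top$ follows from the joint invariance hypothesis, so Lemma~\ref{lem:lyap} identifies $\gamma$ from \eqref{eq:topL} with $\E[\log(\Ab\Ab^\top)_{11}]$, which is negative by assumption. The hypothesis $\gamma<0$ will be read as saying that $\E[\log(\Ab\Ab^\top)_{11}]$ is well defined and strictly negative (possibly $-\infty$). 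In particular it forces $\E[\log^+(\Ab\Ab^\top)_{11}]<+\infty$, and this will be used below.

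The key step is to control $\E[\log^+\lambda_{\max}(\mathbf M)]$ by $\E[\log^+\mathbf M_{11}]$ for an orthogonally invariant $\mathbf M\in\mathrm{Sym}_{\ge0}(N)$. For this I reuse the representation from the proof of Lemma~\ref{lem:lyap}. Write $\mathbf M\stackrel{d}{=}\Ub\mathbf\Lambda\Ub^\top$ with $\Ub$ Haar orthogonal and independent of $\mathbf\Lambda$. In this representation we have, pointwise,
\[
\mathbf M_{11}=\sum_j\lambda_j(\mathbf M)\Ub_{1j}^2\ge\lambda_{\max}(\mathbf M)\,\Ub_{11}^2 .
\]
Hence
\[
\log^+\lambda_{\max}(\mathbf M)\le\log^+\mathbf M_{11}+\log(\Ub_{11}^{-2}).
\]
The variable $\Ub_{11}^2\sim\mathrm{Beta}(1/2,(N-1)/2)$ has $\E[-\log\Ub_{11}^2]=\Psi(N/2)-\Psi(1/2)<\infty$. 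Taking expectations gives
\[
\E[\log^+\lambda_{\max}(\mathbf M)]\le\E[\log^+\mathbf M_{11}]+\Psi(N/2)-\Psi(1/2).
\]

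I will apply this bound twice. First, with $\mathbf M=\Ab\Ab^\top$ it gives the first condition in \eqref{eq:log_moments}, using the finiteness of $\E[\log^+(\Ab\Ab^\top)_{11}]$ noted above. Second, I apply it to $\mathbf M=\Bb\Bb^\top$. Here orthogonal invariance must be checked: from $\ub\Bb\ub^\top\stackrel{d}{=}\Bb$ we get $\ub\Bb\Bb^\top\ub^\top=(\ub\Bb\ub^\top)(\ub\Bb\ub^\top)^\top\stackrel{d}{=}\Bb\Bb^\top$. Combined with the assumption $\E[\log^+(\Bb\Bb^\top)_{11}]<\infty$, this yields the second condition in \eqref{eq:log_moments}. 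Theorem~\ref{thm:matrixperp} then gives existence and uniqueness.

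The only delicate point is the interpretation of $\gamma<0$ as including integrability of the positive part of $\log(\Ab\Ab^\top)_{11}$. If one prefers not to rely on this reading, the fallback is to note that $\E[\log(\Ab\Ab^\top)_{11}]<0$ is only meaningful when that positive part has finite expectation, so it is implicitly part of the hypothesis. Everything else is a direct combination of Lemma~\ref{lem:lyap}, the eigenvalue–diagonal comparison \eqref{eq:3.12}, and Theorem~\ref{thm:matrixperp}.
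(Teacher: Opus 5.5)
Your proposal is correct and follows the paper's proof essentially step by step: Lemma~\ref{lem:lyap} handles the Lyapunov exponent. The comparison $\lambda_{\max}(\mathbf M)\,\Ub_{11}^2\le \mathbf M_{11}\le \lambda_{\max}(\mathbf M)$, read in the Haar representation, transfers $\log^+$-integrability between $\lambda_{\max}$ and the $(1,1)$ entry for $\mathbf M=\Ab\Ab^\top$ and $\mathbf M=\Bb\Bb^\top$, and Theorem~\ref{thm:matrixperp} then concludes. Your explicit constant $\Psi(N/2)-\Psi(1/2)$ and your check that $\Bb\Bb^\top$ inherits orthogonal invariance spell out details the paper leaves implicit; your reading of $\gamma<0$ as forcing $\E[\log^+(\Ab\Ab^\top)_{11}]<\infty$ matches the paper's ``the first one follows from $\gamma<0$.''
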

\begin{proof}
    We verify the assumptions of Theorem~\ref{thm:matrixperp}. By Lemma \ref{lem:lyap} we obtain the representation of the top Lyapunov exponent. 

    It remains to check the logarithmic moment assumptions in
\eqref{eq:log_moments}. Let \(\mathbf S\) be an orthogonally invariant
positive semidefinite random matrix, and let \(\Ub\) be an independent Haar
orthogonal matrix. If \(\Ub \mathbf S \Ub^\top \stackrel{d}{=} \mathbf S\),
    then repeating the steps from the proof of Lemma \ref{lem:lyap} we obtain (here $\stackrel{d}{\leq}$ denotes the usual stochastic order)
    \[
\lambda_{\max}(\mathbf{S})\Ub_{11}^2 \stackrel{d}{\leq} \mathbf{S}_{11} \leq \lambda_{\max}(\mathbf{S}),
    \]
    where $\Ub_{11}^2\sim\mathrm{Beta}(1/2,(N-1)/2)$.
It follows that
\[
\E\left[\log^+\lambda_{\max}(\mathbf S)\right]<\infty
\quad\Longleftrightarrow\quad
\E\left[\log^+\mathbf S_{11}\right]<\infty.
\]
Applying this to \(\mathbf S=\Ab\Ab^\top\) and \(\mathbf S=\Bb\Bb^\top\), we see
that \eqref{eq:log_moments} is equivalent to
\[
\E\!\left[\log^+(\Ab\Ab^\top)_{11}\right]<\infty
\qquad\text{and}\qquad
\E\!\left[\log^+(\Bb\Bb^\top)_{11}\right]<\infty.
\]
The second condition is assumed. The first one follows from $\gamma<0$. 

Thus all assumptions of Theorem~\ref{thm:matrixperp} are satisfied, and
\eqref{eq:matperp} admits a unique solution.
\end{proof}

\section{Tails of matrix perpetuities}\label{sec:tails}

The problem of studying tail asymptotics for solutions of multivariate affine fixed-point equations has a long history.

One of the most celebrated results in this area is Kesten’s theorem. Consider the vector perpetuity equation
\begin{align*}
\underline{X} \stackrel{d}{=} \tilde{\Ab}\underline{X}+\underline{B},\qquad (\tilde{\Ab},\underline{B})\ \text{and}\ \underline{X}\ \text{are independent},
\end{align*}
where $\underline{X},\underline{B}\in\R^N$ and $\tilde{\Ab}\in\mathrm{Mat}(N)$.

For several variants of Kesten’s theorem, see \cite[Section 4.4]{BDM} as well as \cite{Kes73, GLP16}. A key ingredient common to these variants is the function $\tilde h$ defined by
    \[
    \tilde{h}(s) = \inf_{n\geq 1}\E[ \| \tilde{\Ab}_1\cdots \tilde{\Ab}_n\|^s]^{1/n},
    \]
where $(\tilde{\Ab}_n)_{n\geq 1}$ are iid copies of $\tilde{\Ab}$.

While the precise conclusions of Kesten’s theorem depend on the specific setting (see, e.g., \cite[Theorems 4.4.5, 4.4.15, 4.4.18]{BDM}), they share a common theme: $X$ exhibits power-law tails of order $t^{-\eta}$, where $\eta>0$ is the unique solution of $\tilde h(\eta)=1$. For instance, Guivarc'h and Le Page showed that, under additional assumptions (see \cite[Theorem 5.2]{GLP16} or \cite[Theorem 4.4.18]{BDM}), there exists a nonzero Radon measure $\mu$ on $(\R\cup\{-\infty,+\infty\})^{N}\setminus\{0\}$ such that
\[
t^{\eta}\,\P\left(t^{-1}\underline{X}\in \cdot\right) \stackrel{v}{\longrightarrow}\mu,\qquad t\to+\infty,
\]
where $\stackrel{v}{\longrightarrow}$ denotes the vague convergence.

Our interest, however, is not in the asymptotic behavior of $\P(\underline{X}\in\cdot)$, but rather in the tail asymptotics of the expected empirical spectral distribution $\mu_\Xb(\cdot)$ (recall Definition \ref{def:spect}). We assume  orthogonal invariance of the law of $(\Ab,\Bb)$, which is natural in our framework and considerably simplifies the analysis. In our framework condition $\tilde h(\eta)=1$ is replaced by the much simpler moment condition $\E[\Ab_{11}^{\eta}]=1$. In addition, our approach yields tail asymptotics for $\mu_{\Xb}$, as well as for $\P_{\lambda_{\max}(\Xb)}$ and $\P_{\Xb_{11}}$, with explicit positive constants.

\begin{thm}\label{thm:tailsmat}
  Assume that $N\geq 2$ and
    \begin{enumerate}
    \item[(i)] $\Ab,\Bb\in\mathrm{Sym}_{\geq0}(N)$ a.s.,
        \item[(ii)] $(\ub\Ab \ub^\top, \ub \Bb \ub^\top)\stackrel{d}{=}(\Ab,\Bb)$ for any orthogonal matrix $\ub$,
        \item[(iii)] there exists $\eta>0$ such that 
        \begin{enumerate}
            \item $\E[\Ab_{11}^\eta]=1$,
             \item $\E[\Ab_{11}^\eta \log^+(\Ab_{11})]<\infty$,
             \item $\E[\Bb_{11}^\eta]<\infty$,
             \item $\E[ \det( \Ab^{[2]})^{\eta/2}]<1$,
        \end{enumerate}
        \item[(iv)] the conditional law of $\log \Ab_{11}$ given $\{\Ab_{11}\neq 0\}$ is non-arithmetic,
        \item[(v)] $\P(\Ab_{11}x+\Bb_{11}=x)<1$ for $x\in\R$.
    \end{enumerate}
   Then, there exists a unique solution $\Xb$ to
\begin{align}\label{eq:perpMatS}
\Xb\stackrel{d}{=}\Ab^{1/2} \Xb \Ab^{1/2} +\Bb,\qquad (\Ab,\Bb)\mbox{ and }\Xb\mbox{ are independent},
\end{align} 
which satisfies 
\[
\mu_\Xb\big( (t,\infty)\big) \sim \frac{1}{N}\P(\lambda_{\max}(\Xb) > t) \sim \frac1N \frac{\sqrt{\pi}\, \Gamma\left(\eta+\dfrac{N}{2}\right)}{\Gamma\left(\dfrac{N}{2}\right)\, \Gamma\left(\eta+\tfrac12\right)} \P(\Xb_{11}>t)
\]
and
    \begin{align}\label{eq:limit11}
\lim_{t\to+\infty} t^\eta\, \P(\Xb_{11}>t) = \frac{ \E\left[ (\Ab_{11}\Xb_{11}+\Bb_{11})^\eta - (\Ab_{11}\Xb_{11})^\eta \right]}{\eta \,\E[\Ab_{11}^\eta \log\Ab_{11} ]}\in(0,+\infty).
    \end{align}
\end{thm}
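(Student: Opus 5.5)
Existence and uniqueness come first, from Corollary~\ref{cor:matrixperp_inv} applied with $\Ab^{1/2}$ in place of $\Ab$. Then $\Ab^{1/2}(\Ab^{1/2})^\top=\Ab$, so $\gamma=\E[\log\Ab_{11}]$. By convexity of $s\mapsto\log\E[\Ab_{11}^s]$, which vanishes at $0$ and at $\eta$ and is finite on $[0,\eta]$, together with (iv) excluding degeneracy, we get $\gamma<0$. The logarithmic moment of $\Bb$ follows from (iii)(c), since $(\Bb\Bb^\top)_{11}\le \lambda_{\max}(\Bb)\,\Bb_{11}\cdot$const in the stochastic order $\lambda_{\max}(\Bb)\Ub_{11}^2\le\Bb_{11}$.

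Next, Lemma~\ref{lem:subperp} with $k=1$ gives the scalar perpetuity \eqref{eq:11}, $\Xb_{11}\stackrel d=\Ab_{11}\Xb_{11}+\Bb_{11}$ with nonnegative coefficients. Assumptions (iii)(a)--(c), (iv) and (v) are exactly the hypotheses of Kesten--Goldie's theorem. Goldie's implicit renewal formula then yields \eqref{eq:limit11}, including strict positivity of the constant; positivity comes from (v) and $\Bb\ge0$ via Goldie's/Guivarc'h--Le Page's positivity criterion.

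The main step transfers the tail from $\Xb_{11}$ to the spectrum. By Lemma~\ref{lem:basic_prop}, $\Xb$ is orthogonally invariant, so $\Xb\stackrel d=\Ub\Lambda\Ub^\top$ with $\Ub$ Haar and independent of $\Lambda$. Hence
\[
\Xb_{11}\stackrel d=\sum_j\lambda_j\Ub_{1j}^2 .
\]
Write $\lambda_1\ge\lambda_2\ge\dots$. We show that $\lambda_2$ has a strictly lighter tail: $\P(\lambda_2>t)=o(t^{-\eta})$. For this we use the $2\times2$ compression. By Lemma~\ref{lem:subperp}, $\Xb^{[2]}$ is a $2$-dimensional perpetuity, and $\det(\Xb^{[2]})^{1/2}$ is controlled by $\lambda_1\lambda_2$ via orthogonal invariance; indeed Cauchy--Binet gives $\det\Xb^{[2]}\stackrel d=\sum_{i<j}\lambda_i\lambda_j\,(\text{minor of }\Ub)^2$. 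Superadditivity of $\det^{1/2}$ on $\mathrm{Sym}_{\ge0}(2)$ gives
\[
\det(\Xb^{[2]})^{1/2}\ge \det(\Ab^{[2]})^{1/2}\det(\Xb^{[2]})^{1/2}+\det(\Bb^{[2]})^{1/2},
\]
and iterating the series representation shows
\[
\E[\det(\Xb^{[2]})^{\eta/2}]\le C\sum_n \E[\det(\Ab^{[2]})^{\eta/2}]^n<\infty
\]
by (iii)(d). I expect making this moment bound rigorous to be the main obstacle; the plan is Minkowski's inequality when $\eta/2\ge1$ and subadditivity when $\eta/2<1$, applied to the series. Given it, $\E[(\lambda_1\lambda_2)^{\eta/2}]<\infty$, and hence $\E[\lambda_2^{\eta}]<\infty$ since $\lambda_2^2\le\lambda_1\lambda_2$. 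Therefore $\P(\lambda_2>t)=o(t^{-\eta})$.

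From $\sum_{j\ge2}\lambda_j\Ub_{1j}^2\le\lambda_2$ and Breiman's lemma applied to $\lambda_1\Ub_{11}^2$ (the two factors are independent and $\Ub_{11}^2$ is bounded), we obtain
\[
\P(\Xb_{11}>t)\sim\E[\Ub_{11}^{2\eta}]\,\P(\lambda_1>t).
\]
This requires $\lambda_1$ to be regularly varying, which we invert from the known tail of $\Xb_{11}$: the Mellin transform of $\Ub_{11}^2$ does not vanish, so a Tauberian/Breiman converse applies. The computation
\[
\E[\Ub_{11}^{2\eta}]=\frac{\Gamma(\eta+\tfrac12)\Gamma(\tfrac N2)}{\sqrt\pi\,\Gamma(\eta+\tfrac N2)}
\]
for $\mathrm{Beta}(1/2,(N-1)/2)$ gives the stated constant. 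Finally,
\[
\mu_\Xb((t,\infty))=\frac1N\sum_j\P(\lambda_j>t)=\frac1N\P(\lambda_1>t)+o(t^{-\eta}),
\]
which completes the chain of equivalences.
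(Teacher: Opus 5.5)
\textbf{There is a genuine gap at the step you flag yourself, and that step carries the theorem:} the bound $\E[\det(\Xb^{[2]})^{\eta/2}]<\infty$.

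The inequality you display comes from Minkowski's determinant inequality. It is a \emph{lower} bound on $\det(\Xb^{[2]})^{1/2}$, so it cannot bound its moments from above. Nor can a termwise Minkowski/subadditivity argument along the series work, because $\det^{1/2}$ is superadditive, not subadditive, on $\mathrm{Sym}_{\geq0}(2)$.

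Concretely, take $\Bb$ of rank one, e.g.\ $\Bb=\xi\,vv^\top$ with $v$ uniform on the sphere and independent of $\Ab$; this is allowed by the hypotheses. Then every summand of the compressed series has determinant $0$, so a termwise bound would force $\det(\Xb^{[2]})=0$ a.s. Yet $\det(\Xb^{[2]})>0$ with positive probability.

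Any correct upper bound must contain a mixed term. For $2\times 2$ positive semidefinite $\mathbf{p},\mathbf{q}$ one has $\det(\mathbf{p}+\mathbf{q})\le\det\mathbf{p}+\det\mathbf{q}+\mathrm{Tr}(\mathbf{p})\mathrm{Tr}(\mathbf{q})$; the paper uses $e_2(\xb+\yb)\le e_2(\xb)+e_2(\yb)+\mathrm{Tr}(\xb)\mathrm{Tr}(\yb)$. This mixed term is governed by traces, whose tail index is $\eta$ itself, not by (iii)(d). Bounding the mixed terms crudely, say by $\mathrm{Tr}(\Xb)^2$, is useless, since $\E[\mathrm{Tr}(\Xb)^{\eta}]\ge\E[\Xb_{11}^\eta]=\infty$.

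\textbf{The missing ingredient is the block-absorption argument of Lemma~\ref{lem:moment_lambda1lambda2}.} Iterate the fixed-point equation over a block of fixed length $m$: $\Xb\stackrel{d}{=}\Pi_m\Xb\Pi_m^\top+\mathbf{C}_m$, with $\Xb$ independent of $(\Pi_m,\mathbf{C}_m)$. This gives $e_2(\Xb)\le\lambda_{[1:2]}(\mathbf{M}_m)\,e_2(\Xb)+e_2(\mathbf{C}_m)+\lambda_1(\mathbf{M}_m)\,\mathrm{Tr}(\Xb)\,\mathrm{Tr}(\mathbf{C}_m)$.
\begin{itemize}
\item The mixed term now needs only $\E[\mathrm{Tr}(\Xb)^{\eta/2}]<\infty$, at half the critical exponent (the paper gets it from $h_1(\eta/2)<1$ by convexity), plus finite moments of one fixed block.
\item The first term can be absorbed into the left-hand side, because $\E[\lambda_{[1:2]}(\mathbf{M}_m)^{\eta/2}]\to0$ as $m\to\infty$. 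This follows from submultiplicativity and $h_2(\eta)=\E[\det(\Ab^{[2]})^{\eta/2}]<1$ (Lemma~\ref{lem:hi_invariance}); this is exactly where (iii)(d) enters.
\item Finiteness is not known a priori, so the absorption must be justified, e.g.\ by running the inequality along the chain started at $\Xb_0=0$ and letting $n\to\infty$.
\end{itemize}

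Your $2\times2$ compression fits this scheme well. With $\mathbf{P}_m=(\Ab_1^{[2]})^{1/2}\cdots(\Ab_m^{[2]})^{1/2}$, the determinant $\det(\mathbf{P}_m\mathbf{P}_m^\top)=\prod_{i\le m}\det(\Ab_i^{[2]})$ factorizes exactly. Also $\E[\mathrm{Tr}(\Xb^{[2]})^{\eta/2}]<\infty$ follows directly from the Goldie tail of $\Xb_{11}$. But without the mixed term and the block absorption, the key moment bound is unproved.

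The remaining steps match the paper: existence, Goldie for $\Xb_{11}$, the Cauchy--Binet lower bound $\det\Xb^{[2]}\ge\lambda_1\lambda_2\cdot(\text{minor})^2$, $\lambda_2^2\le\lambda_1\lambda_2$, the sandwich for $\Ub_{11}^2\lambda_1+R$, and the converse Breiman lemma with the non-vanishing Beta Mellin transform.
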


The proof of Theorem \ref{thm:tailsmat} is given in Section \ref{sec:proof_tails}. It relies on several auxiliary lemmas, which are stated and proved in Section \ref{lem:Lemmas}.

\subsection{Auxiliary lemmas}\label{lem:Lemmas}

For a symmetric matrix $\ab$, let
\[
\lambda_1(\ab)\geq \lambda_2(\ab)\geq \cdots \geq \lambda_N(\ab)
\]
denote its eigenvalues in non-increasing order. 

\begin{defin}
Let $(\Ab_n)_{n\ge 1}$ be iid copies of $\Ab$ and set
\[
\Pi_n:=\Ab_n\cdots \Ab_1,\qquad {\bf M}_n:=\Pi_n \Pi_n^\top\in\mathrm{Sym}_{\geq0}(N).
\]
Fix $k\in\{1,\ldots,N\}$ and define 
\begin{align}\label{eq:def_h}
h_k(s):=    \inf_{n\in\mathbb{N}} \E\left[ \prod_{i=1}^k \lambda_i({\bf M}_n)^{s/k}\right]^{1/n}
\end{align}
for nonnegative $s$ such that $ \E\left[ \prod_{i=1}^k \lambda_i(\Ab \Ab^\top)^{s/k}\right]<\infty$.
\end{defin}

\begin{remark}\label{rem:submult}
For each $k$ the quantity 
\[
\lambda_{[1:k]}({\bf m}):= \prod_{i=1}^k \lambda_i({\bf m})
\]
agrees with
\[
\lambda_{\max}(\wedge^k {\bf m}),
\]
where $\wedge^k$ denotes the $k$th exterior power. We will not make explicit use of exterior powers in the statements or proofs below, but this identification implies in particular that $\lambda_{[1:k]}$ is submultiplicative. This justifies in particular why $h_k(s)$ is well defined provided that $ \E\left[ \prod_{i=1}^k \lambda_i(\Ab \Ab^\top)^{s/k}\right]<\infty$.
\end{remark}

Our results will only use $h_1$ and $h_2$, but we consider general $h_k$ for completeness.

\begin{lemma}\label{lem:hi_invariance}
Assume that $\ub\Ab\Ab^\top\,\ub^\top\stackrel{d}{=}\Ab\Ab^\top$ for every orthogonal matrix $\ub$. Fix $k\in\{1,\ldots,N\}$ and let $s>0$ be such that
$ \E\left[ \prod_{i=1}^k \lambda_i(\Ab\Ab^\top)^{s/k}\right]<\infty$.
Then
\begin{align}\label{eq:hi_representation}
h_k(s)=\E\left[\det\big((\Ab\Ab^\top)^{[k]}\big)^{s/k}\right].
\end{align}
\end{lemma}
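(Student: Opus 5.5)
We mimic the proof of Lemma~\ref{lem:lyap}, replacing the $(1,1)$ entry by the $k\times k$ leading principal minor and $\lambda_{\max}$ by the product $\lambda_{[1:k]}$ of the top $k$ eigenvalues. Set ${\bf M}_n=\Pi_n\Pi_n^\top$. Since $\lambda_{[1:k]}$ of $\Pi_n^\top\Pi_n$ and of $\Pi_n\Pi_n^\top$ coincide, we may work with ${\bf M}_n$. The induction in the proof of Lemma~\ref{lem:lyap} shows that ${\bf M}_n$ is orthogonally invariant, and it satisfies ${\bf M}_n=\Ab_n{\bf M}_{n-1}\Ab_n^\top$.

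The first step is multiplicativity of the minor. Applying Corollary~\ref{prop:39} with $\Bb={\bf M}_{n-1}$ gives
\[
({\bf M}_n)^{[k]}\stackrel{d}{=}((\Ab_n\Ab_n^\top)^{[k]})^{1/2}\,{\bf M}_{n-1}^{[k]}\,((\Ab_n\Ab_n^\top)^{[k]})^{1/2},
\]
with the two factors independent. Taking determinants and iterating yields
\[
\det({\bf M}_n^{[k]})\stackrel{d}{=}\prod_{i=1}^n\det\big((\Ab_i\Ab_i^\top)^{[k]}\big)
\]
as a product of iid factors. Hence
\[
\E\big[\det({\bf M}_n^{[k]})^{s/k}\big]=h^\ast(s)^n,\qquad h^\ast(s):=\E\big[\det((\Ab\Ab^\top)^{[k]})^{s/k}\big].
\]

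The second step is a two-sided comparison between $\det({\bf m}^{[k]})$ and $\lambda_{[1:k]}({\bf m})$.

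For the upper bound, Cauchy interlacing gives $\lambda_i({\bf m}^{[k]})\le\lambda_i({\bf m})$, so deterministically $\det({\bf m}^{[k]})\le\lambda_{[1:k]}({\bf m})$ for ${\bf m}\ge0$.

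For the lower bound, write an orthogonally invariant ${\bf M}=\Ub\Lambda\Ub^\top$ with $\Ub$ Haar and independent of $\Lambda$. Let ${\bf V}$ be the $N\times k$ matrix formed by the first $k$ rows of $\Ub$, transposed. Then ${\bf M}^{[k]}={\bf V}^\top\Lambda{\bf V}$, and by Cauchy--Binet
\[
\det({\bf M}^{[k]})=\sum_{|S|=k}\Big(\prod_{i\in S}\lambda_i\Big)\det({\bf V}_S)^2\ \ge\ \lambda_{[1:k]}({\bf M})\,\det(\Ub^{[k]})^2 .
\]
Thus $\lambda_{[1:k]}({\bf M})\,D\stackrel{d}{\le}\det({\bf M}^{[k]})$ with $D=\det(\Ub^{[k]})^2$ independent of $\Lambda$. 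This gives
\[
\E\big[D^{s/k}\big]\,\E\big[\lambda_{[1:k]}({\bf M}_n)^{s/k}\big]\ \le\ h^\ast(s)^n\ \le\ \E\big[\lambda_{[1:k]}({\bf M}_n)^{s/k}\big].
\]

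Finally, take $n$th roots. Write $c=\E[D^{s/k}]$; it is a constant in $(0,1]$, positive because $D>0$ a.s. (Haar measure puts no mass on $\det\Ub^{[k]}=0$), and it does not depend on $n$. The bounds give $h^\ast(s)\le\E[\lambda_{[1:k]}({\bf M}_n)^{s/k}]^{1/n}\le c^{-1/n}h^\ast(s)$. Since $\lambda_{[1:k]}$ is submultiplicative (Remark~\ref{rem:submult}), the infimum in \eqref{eq:def_h} equals the limit as $n\to\infty$, and $c^{-1/n}\to1$, so $h_k(s)=h^\ast(s)$. In fact the lower bound already gives $h^\ast(s)\le$ every term, so the infimum is at least $h^\ast(s)$, and the limit gives equality.

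Finiteness of all expectations follows from the assumed moment of $\lambda_{[1:k]}(\Ab\Ab^\top)^{s/k}$, submultiplicativity, and the deterministic upper bound.

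The main obstacle is the lower bound step, specifically justifying the Cauchy--Binet inequality and the positivity of $\E[D^{s/k}]$. If this is awkward, one can instead use $\det({\bf M}^{[k]})\ge\det(\Lambda_{[1:k]})\det({\bf V}_{[1:k]})^2$ directly, since all terms in the Cauchy--Binet sum are nonnegative.
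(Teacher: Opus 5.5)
Your proof is correct. Its first half, the multiplicativity of $\det({\bf M}_n^{[k]})$ via Corollary~\ref{prop:39} and the bound $\det({\bf m}^{[k]})\le\lambda_{[1:k]}({\bf m})$ from Cauchy interlacing, is the same as in the paper; the two arguments differ only in the reverse comparison. The paper keeps that step deterministic. It uses $\lambda_{[1:k]}({\bf m})\le e_k({\bf m})=\sum_{|S|=k}\det({\bf m}^{[S]})$ for ${\bf m}\ge 0$, together with the elementary inequality $(\sum_{r\le D}x_r)^{s/k}\le c(D,s/k)\sum_r x_r^{s/k}$. It then uses only that all $k\times k$ principal submatrices of ${\bf M}_n$ have the same law, which gives $\E[\lambda_{[1:k]}({\bf M}_n)^{s/k}]\le c(D,s/k)\,D\,h^\ast(s)^n$ with $D=\binom{N}{k}$. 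You instead use the Haar factorization ${\bf M}_n\stackrel{d}{=}\Ub\Lambda\Ub^\top$ and Cauchy--Binet to get the stochastic bound $\det({\bf M}^{[k]})\ge\lambda_{[1:k]}({\bf M})\det(\Ub^{[k]})^2$. This is exactly the $k$-dimensional analogue of the inequality ${\bf M}_{11}\ge\lambda_{\max}({\bf M})\Ub_{11}^2$ in the proof of Lemma~\ref{lem:lyap}, so your route makes the two lemmas a single argument. The price is that you need independence of $\Ub$ and $\Lambda$ and the easy fact that $\E[\det(\Ub^{[k]})^{2s/k}]>0$, whereas the paper's step needs only equidistribution of the principal minors. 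In both routes the constant does not depend on $n$ and disappears after taking $n$th roots. Your constant, a Haar integral extending the $\mathrm{Beta}(1/2,(N-1)/2)$ factor of the case $k=1$, is more intrinsic than the paper's combinatorial $c(D,s/k)D$. As you note yourself, the appeal to submultiplicativity and Fekete's lemma is unnecessary, since the sandwich already identifies both the infimum and the limit of the $n$th roots.
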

\begin{proof}
Let $n\ge 1$. By Corollary~\ref{prop:39} iterated
along the product, one has the distributional identity 
\begin{align}\label{eq:minor_product_i}
({\bf M}_n)^{[k]} \stackrel{d}{=}\ 
\big(\tilde{\Ab}_n^{[k]}\big)^{1/2}\cdots \big(\tilde{\Ab}_{2}^{[k]}\big)^{1/2}
(\tilde{\Ab}_1^{[k]})
\big(\tilde{\Ab}_{2}^{[k]}\big)^{1/2}\cdots \big(\tilde{\Ab}_n^{[k]}\big)^{1/2},
\end{align}
where $\tilde{\Ab}_i := \Ab_i\Ab_i^\top$.
Taking determinants on both sides gives
\begin{align}\label{eq:wedge_entry_product_i}
\det\big({\bf M}_n^{[k]}\big)
\stackrel{d}{=} \prod_{i=1}^n \det\big(\tilde{\Ab}_i^{[k]}\big).
\end{align}
By the Cauchy interlacing inequalities, we have for any ${\bf m}\in\mathrm{Sym}(N)$ and $i=1,\ldots,k$,
\[
\lambda_i({\bf m}) \geq \lambda_i({\bf m}^{[k]}).
\]
Thus, since ${\bf M}_n$ is positive semidefinite, we obtain 
\[
\prod_{i=1}^k \lambda_i({\bf M}_n) \geq \prod_{i=1}^k \lambda_i({\bf M}_n^{[k]}) =  \det\big({\bf M}_n^{[k]}\big).
\]
Therefore, by \eqref{eq:wedge_entry_product_i} and independence,
\[
\E\left[\prod_{i=1}^k \lambda_i({\bf M}_n)^{s/k}\right]
\ge \E\left[\det\big(\tilde{\Ab}^{[k]}\big)^{s/k}\right]^n = 
\E\left[\det\big((\Ab\Ab^\top)^{[k]}\big)^{s/k}\right]^n.
\]
Taking $1/n$-th powers and then the infimum over $n$ yields
\begin{align}\label{eq:hi_lower}
h_k(s)\ge \E\left[\det\big((\Ab\Ab^\top)^{[k]}\big)^{s/k}\right].
\end{align}
For the reverse inequality, observe that for any positive semidefinite matrix \({\bf m}\), 
\begin{align}\label{eq:ineq+identity}
\prod_{i=1}^k \lambda_i({\bf m})
\le e_k({\bf m})
= \sum_{1\le j_1<\cdots<j_k\le N}
\det\bigl({\bf m}^{[j_1,\ldots,j_k]}\bigr),
\end{align}
where \(e_k({\bf m})\) denotes the \(k\)-th elementary symmetric polynomial in the eigenvalues of \({\bf m}\), and \({\bf m}^{[j_1,\ldots,j_k]}\) denotes the principal \(k\times k\) submatrix of \({\bf m}\) indexed by \(\{j_1,\ldots,j_k\}\), \cite[Theorem 1.2.16]{MAnal}. We have
\[
\Big(\sum_{r=1}^D x_r\Big)^s \le c(D,s)\sum_{r=1}^D x_r^s,\qquad x_r\ge 0,
\]
for a constant $c(D,s)\in(0,\infty)$. In our case we set $D:=\binom{N}{k}$ and we have
\[
\prod_{i=1}^k \lambda_i({\bf M}_n)^{s/k}
\le c(D,s/k)\sum_{1\le j_1<\cdots<j_k\le N}
\det\bigl({\bf M}_n^{[j_1,\ldots,j_k]}\bigr)^{s/k}.
\]
By orthogonal invariance of $\Ab\Ab^\top$, the law of ${\bf M}_n=\Pi_n\Pi_n^\top$ is orthogonally invariant.
In particular, the distribution of ${\bf M}_n^{[j_1,\ldots,j_k]}$ does not depend on the choice of $\{j_1,\ldots,j_k\}$. Thus, 
\[
\E\left[\prod_{i=1}^k \lambda_i({\bf M}_n)^{s/k}\right]
\le c(D,s/k)\,D\ \E\left[\det\big({\bf M}_n^{[k]}\big)^{s/k}\right]
= c(D,s/k)\,D\ \E\left[\det\big((\Ab\Ab^\top)^{[k]}\big)^{s/k}\right]^n,
\]
where the last equality follows from \eqref{eq:wedge_entry_product_i}. 
Therefore,
\[
\E\left[\prod_{i=1}^k \lambda_i({\bf M}_n)^{s/k}\right]^{1/n}
\le \big(c(D,s/k)\,D\big)^{1/n}\ \E\left[\det\big((\Ab\Ab^\top)^{[k]}\big)^{s/k}\right].
\]
Letting $n\to\infty$ gives
\begin{align}\label{eq:hi_upper}
h_k(s)\le \E\left[\det\big((\Ab\Ab^\top)^{[k]}\big)^{s/k}\right].
\end{align}
Combining \eqref{eq:hi_lower} and \eqref{eq:hi_upper} yields \eqref{eq:hi_representation}.
\end{proof}

\begin{lemma}\label{lem:moment_lambda1}
Let $\Xb$ be a random matrix satisfying
\begin{equation}\label{eq:sfp_sym}
\Xb \stackrel{d}{=} \Ab\,\Xb\,\Ab^\top+\Bb,
\qquad (\Ab,\Bb)\ \text{independent of }\Xb,
\end{equation}
where $\Bb\in\mathrm{Sym}_{\geq0}(N)$ a.s. 

Assume that there exists $p>0$ such that 
\[
\E[\mathrm{Tr}( \Ab\Ab^\top)^{p}]<\infty,\quad \E\big[\mathrm{Tr}(\Bb)^{p}\big]<\infty\quad\mbox{and}\quad h_1(p)<1.
\]
Then
\[
\E\Big[\lambda_1(\Xb)^p\Big]<\infty.
\]
\end{lemma}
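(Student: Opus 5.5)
We bound $\lambda_1(\Xb)$ through the series representation. Note first that $\lambda_1$ is dominated by the trace on $\mathrm{Sym}_{\geq0}(N)$, and the trace is subadditive. The hypotheses of Lemma~\ref{lem:moment_lambda1} do not explicitly include \eqref{eq:log_moments}–\eqref{eq:topL}, so we first check them. The log-moment conditions follow from $\E[\mathrm{Tr}(\Ab\Ab^\top)^p]<\infty$ and $\E[\mathrm{Tr}(\Bb)^p]<\infty$; for $\Bb$ we use $\lambda_{\max}(\Bb\Bb^\top)=\lambda_{\max}(\Bb)^2\le \mathrm{Tr}(\Bb)^2$. Since $h_1(p)<1$, there is $n_0$ with $\E[\lambda_1({\bf M}_{n_0})^p]<1$. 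Jensen's inequality, $\E\log\lambda_1\le p^{-1}\log\E\lambda_1^p$, then gives $\gamma\le \frac{1}{n_0 p}\log \E[\lambda_1({\bf M}_{n_0})^p]<0$. Hence Theorem~\ref{thm:matrixperp} applies, the solution is unique, and $\Xb\stackrel{d}{=}\sum_{n\ge1}\Pi_{n-1}'\Bb_n(\Pi_{n-1}')^\top$ with $\Pi'_{n-1}=\Ab_1\cdots\Ab_{n-1}$.

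Each summand is positive semidefinite. Thus
\[
\lambda_1(\Xb)\le \mathrm{Tr}(\Xb)=\sum_{n\ge1}\mathrm{Tr}\big(\Pi'_{n-1}\Bb_n(\Pi'_{n-1})^\top\big)\le \sum_{n\ge1}\lambda_1\big((\Pi'_{n-1})^\top\Pi'_{n-1}\big)\,\mathrm{Tr}(\Bb_n).
\]
Moreover, $\lambda_1(\Pi^\top\Pi)=\lambda_1(\Pi\Pi^\top)$, and $\Ab_1\cdots\Ab_{n-1}$ has the same law as $\Ab_{n-1}\cdots\Ab_1$, so this factor has the law of $\lambda_1({\bf M}_{n-1})$. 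It is also independent of $\Bb_n$.

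Next we show that $\E[\lambda_1({\bf M}_n)^p]$ decays geometrically. By Remark~\ref{rem:submult}, $\lambda_1(\Pi\Pi^\top)=\|\Pi\|^2$ is submultiplicative. Write $n=qn_0+r$ with $0\le r<n_0$ and split the product into independent blocks. This gives $\E[\lambda_1({\bf M}_n)^p]\le \rho^{q}\,C$, where $\rho=\E[\lambda_1({\bf M}_{n_0})^p]<1$ and $C=\max_{r<n_0}\E[\lambda_1(\Ab\Ab^\top)^p]^r<\infty$. The last bound uses $\lambda_1(\Ab\Ab^\top)\le\mathrm{Tr}(\Ab\Ab^\top)$. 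Hence $a_n:=\E[\lambda_1({\bf M}_{n})^p]\le C'\theta^n$ for some $\theta<1$.

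Finally we sum the series, treating two cases. If $p\le1$, apply the inequality $(\sum x_n)^p\le\sum x_n^p$ and independence:
\[
\E[\lambda_1(\Xb)^p]\le \E[\mathrm{Tr}(\Bb)^p]\sum_{n\ge1} a_{n-1}<\infty.
\]
If $p>1$, apply Minkowski's inequality in $L^p$ instead:
\[
\|\lambda_1(\Xb)\|_p\le \|\mathrm{Tr}(\Bb)\|_p\sum_{n\ge1} a_{n-1}^{1/p}<\infty.
\]
The only mildly delicate point is the geometric decay of $a_n$ from $h_1(p)<1$, which comes from submultiplicativity plus the block decomposition. Everything else is routine.
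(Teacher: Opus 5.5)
Your argument is correct. It differs from the paper's mainly in being self-contained.

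The paper vectorizes the equation and uses the Frobenius norm on $\mathrm{vec}(\cdot)$. With that norm, $\|\mathbf a\otimes\mathbf a\|=\lambda_{\max}(\mathbf a^\top\mathbf a)$, so $h_1(p)$ coincides with the Kesten function $\tilde h(p)$ of the vector recursion. The paper then quotes \cite[Remark 4.4.3]{BDM} to get $\E|\mathrm{vec}(\Xb)|^p<\infty$, and this dominates $\E[\lambda_1(\Xb)^p]$.

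You instead stay at the matrix level and reprove the moment bound. Positivity of $\Bb$ lets you replace norms by the trace. The trace is additive on the positive semidefinite summands of the series and satisfies $\mathrm{Tr}(\Pi\Bb\Pi^\top)\le\lambda_1(\Pi^\top\Pi)\,\mathrm{Tr}(\Bb)$. Submultiplicativity and the block decomposition then give geometric decay of $\E[\lambda_1({\bf M}_n)^p]$, and $p$-subadditivity (for $p\le 1$) or Minkowski (for $p>1$) finishes the summation. This is essentially the standard proof behind the cited vector result, specialized to positive coefficients.

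A real advantage of your version is that you check explicitly that the hypotheses imply \eqref{eq:log_moments} and $\gamma<0$. The latter follows by Jensen from $\E[\lambda_1({\bf M}_{n_0})^p]<1$. Hence the given $\Xb$ is the unique solution and equals the series, an identification the paper leaves implicit. The paper's route is shorter and does not need $\Bb\geq 0$ for the moment bound, so it controls $\E\|\Xb\|^p$ for general additive terms.
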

\begin{proof}
    The result follows from \cite[Remark 4.4.3]{BDM}, which establishes that vector
    \[
    \mathrm{vec}(\Xb)\stackrel{d}{=}(\Ab\otimes\Ab)\mathrm{vec}(\Xb)+\mathrm{vec}(\Bb),
    \]
    has $p$-th moment finite under the assumptions 
    \[
 \tilde{h}(p)<1   \quad \mbox{and}\quad \E[ |\mathrm{vec}(\Bb)|^p ]<\infty,
    \]
    where 
    \[
    \tilde{h}(p) = \inf_{n\geq 1}\E[ \| \tilde{\Ab}_1\cdots \tilde{\Ab}_n\|^p]^{1/n},\qquad \tilde{\Ab}_i=(\Ab_i\otimes\Ab_i).
    \]
We have $h_1(p) = \tilde{h}(p)$, cf. the proof of Theorem \ref{thm:matrixperp}. 
\end{proof}

Let $e_2(\xb)$ be elementary symmetric polynomial in the eigenvalues of $\xb$, i.e.,
\begin{equation}\label{eq:e2_trace_identity}
e_2(\xb)=\frac12\Big(\mathrm{Tr}(\xb)^2-\mathrm{Tr}(\xb^2)\Big).
\end{equation}

\begin{lemma}
    Let $\xb,\yb\in\mathrm{Sym}_{\geq0}(N)$ and ${\bf m}\in\mathrm{Mat}(N)$, $N\geq 2$, the following deterministic inequalities hold:
\begin{equation}\label{eq:e2_le_tr2}
0\le e_2(\xb)\le \frac12\,\mathrm{Tr}(\xb)^2,
\end{equation}
\begin{equation}\label{eq:e2_add}
e_2(\xb+\yb)\le e_2(\xb)+e_2(\yb)+\mathrm{Tr}(\xb)\mathrm{Tr}(\yb),
\end{equation}
\begin{equation}\label{eq:tr_congruence}
\mathrm{Tr}({\bf m} \xb {\bf m}^\top)\le \lambda_{1}({\bf m}{\bf m}^\top)\,\mathrm{Tr}(\xb),
\end{equation}
\begin{equation}\label{eq:e2_congruence}
e_2({\bf m} \xb {\bf m}^\top)\le \lambda_{[1:2]}({\bf m}{\bf m}^\top) \, e_2(\xb).
\end{equation}

\end{lemma}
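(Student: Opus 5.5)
We prove the four inequalities one at a time, using only eigenvalue identities and classical trace inequalities.

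\emph{Inequality \eqref{eq:e2_le_tr2}.} Let $\lambda_1\ge\cdots\ge\lambda_N\ge0$ be the eigenvalues of $\xb$. Then $e_2(\xb)=\sum_{i<j}\lambda_i\lambda_j\ge0$. Moreover, by \eqref{eq:e2_trace_identity} and $\mathrm{Tr}(\xb^2)=\sum_i\lambda_i^2\ge0$, we get $e_2(\xb)\le\frac12\mathrm{Tr}(\xb)^2$.

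\emph{Inequality \eqref{eq:e2_add}.} Expand \eqref{eq:e2_trace_identity}:
\[
e_2(\xb+\yb)=e_2(\xb)+e_2(\yb)+\mathrm{Tr}(\xb)\mathrm{Tr}(\yb)-\mathrm{Tr}(\xb\yb).
\]
For positive semidefinite $\xb,\yb$ we have $\mathrm{Tr}(\xb\yb)=\mathrm{Tr}(\xb^{1/2}\yb\xb^{1/2})\ge0$, which gives the claim.

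\emph{Inequality \eqref{eq:tr_congruence}.} Write $\mathrm{Tr}({\bf m}\xb{\bf m}^\top)=\mathrm{Tr}(\xb^{1/2}{\bf m}^\top{\bf m}\xb^{1/2})$. Since ${\bf m}^\top{\bf m}\le\lambda_1({\bf m}^\top{\bf m})\id_N$ in the Loewner order, this is at most $\lambda_1({\bf m}^\top{\bf m})\mathrm{Tr}(\xb)$. Finally, $\lambda_1({\bf m}^\top{\bf m})=\lambda_1({\bf m}{\bf m}^\top)$.

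\emph{Inequality \eqref{eq:e2_congruence}.} This is the main step. The plan is to use the exterior-power identification from Remark~\ref{rem:submult}. For positive semidefinite ${\bf s}$ we have $e_2({\bf s})=\mathrm{Tr}(\wedge^2{\bf s})$, and $\wedge^2$ is multiplicative with $\wedge^2({\bf m}^\top)=(\wedge^2{\bf m})^\top$. Hence
\[
e_2({\bf m}\xb{\bf m}^\top)=\mathrm{Tr}\big(\wedge^2{\bf m}\,\wedge^2\xb\,(\wedge^2{\bf m})^\top\big).
\]
Since $\wedge^2\xb\ge0$, the argument for \eqref{eq:tr_congruence}, applied on $\wedge^2\R^N$, bounds this by $\lambda_1\big((\wedge^2{\bf m})(\wedge^2{\bf m})^\top\big)\,\mathrm{Tr}(\wedge^2\xb)$. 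It remains to identify the constant. We have $(\wedge^2{\bf m})(\wedge^2{\bf m})^\top=\wedge^2({\bf m}{\bf m}^\top)$, whose eigenvalues are $\lambda_i\lambda_j$ for $i<j$, where $\lambda_i$ are the eigenvalues of ${\bf m}{\bf m}^\top$. Its largest eigenvalue is therefore $\lambda_1\lambda_2=\lambda_{[1:2]}({\bf m}{\bf m}^\top)$.

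To avoid exterior powers, one can argue elementarily instead. By Cauchy--Binet, $e_2({\bf s})$ is the sum of the $2\times2$ principal minors of ${\bf s}$, so $e_2({\bf m}\xb{\bf m}^\top)$ is a sum of squared $2\times2$ minors of ${\bf m}\xb^{1/2}$. The compound matrix $C_2({\bf m}\xb^{1/2})=C_2({\bf m})C_2(\xb^{1/2})$ has squared Frobenius norm exactly $e_2({\bf m}\xb{\bf m}^\top)$. Bounding $\|C_2({\bf m})C_2(\xb^{1/2})\|_F^2\le\|C_2({\bf m})\|_{op}^2\,\|C_2(\xb^{1/2})\|_F^2$ then gives the same inequality, with $\|C_2({\bf m})\|_{op}^2=\lambda_1\lambda_2$.
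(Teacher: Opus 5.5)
Your proof is correct. The first three inequalities are handled essentially as in the paper; for \eqref{eq:tr_congruence}, which the paper simply calls standard, your Loewner-order argument supplies the detail.

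For \eqref{eq:e2_congruence} you take a genuinely different route. The paper works in coordinates:
\begin{itemize}
\item it takes a singular value decomposition ${\bf m}=\ub\ab\vb^\top$ and uses orthogonal invariance of $e_2$ to reduce to $e_2(\ab{\bf z}\ab)$ with ${\bf z}=\vb^\top\xb\vb\ge0$;
\item it expands $e_2$ as the sum of $2\times2$ principal minors and uses $\det((\ab{\bf z}\ab)^{[i,j]})=a_i^2a_j^2\det({\bf z}^{[i,j]})$;
\item it bounds each coefficient by $a_1^2a_2^2$, using that the principal minors of ${\bf z}$ are nonnegative.
\end{itemize}
You instead lift the problem to $\wedge^2\R^N$. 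There \eqref{eq:e2_congruence} becomes literally \eqref{eq:tr_congruence} for the pair $(\wedge^2{\bf m},\wedge^2\xb)$. The constant $\lambda_{[1:2]}({\bf m}{\bf m}^\top)$ then appears as $\lambda_{\max}(\wedge^2({\bf m}{\bf m}^\top))$. This is exactly the identification in Remark~\ref{rem:submult}, which also underlies the submultiplicativity used later in Lemma~\ref{lem:moment_lambda1lambda2}.

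What your version buys is conceptual economy. No SVD is needed, the fourth inequality is visibly the third one in disguise, and the extension to $e_k$ with $\lambda_{[1:k]}$ is immediate. What it costs is reliance on standard facts about compound matrices:
\begin{itemize}
\item multiplicativity of $\wedge^2$, via Cauchy--Binet;
\item $\wedge^2({\bf m}^\top)=(\wedge^2{\bf m})^\top$;
\item positivity of $\wedge^2\xb$ for $\xb\ge0$;
\item the spectrum $\{\lambda_i\lambda_j\}_{i<j}$ of $\wedge^2{\bf s}$.
\end{itemize}
The paper deliberately avoids these, stating in Remark~\ref{rem:submult} that exterior powers are not used explicitly. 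Its argument needs only the principal-minor expansion already quoted in \eqref{eq:ineq+identity}, and it extends to $e_k$ just as easily by bounding $a_{i_1}^2\cdots a_{i_k}^2\le a_1^2\cdots a_k^2$.

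One small attribution slip in your elementary variant. That $e_2({\bf s})$ is the sum of the $2\times2$ principal minors of ${\bf s}$ comes from the characteristic polynomial, not from Cauchy--Binet. Cauchy--Binet is what turns the principal minors of $({\bf m}\xb^{1/2})({\bf m}\xb^{1/2})^\top$ into squared minors of ${\bf m}\xb^{1/2}$.
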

\begin{proof}
Eq. \eqref{eq:e2_le_tr2} is obvious. For \eqref{eq:e2_add}, use \eqref{eq:e2_trace_identity} and $\mathrm{Tr}(\xb\yb)\ge0$, \[ e_2(\xb+\yb) =\frac12\Big((\mathrm{Tr}(\xb)+\mathrm{Tr}(\yb))^2-\mathrm{Tr}(\xb^2)-\mathrm{Tr}(\yb^2)-2\mathrm{Tr}(\xb\yb)\Big) \le e_2(\xb)+e_2(\yb)+\mathrm{Tr}(\xb)\mathrm{Tr}(\yb). \]  
\eqref{eq:tr_congruence} is standard. For \eqref{eq:e2_congruence},  let \(\mathbf m = \ub \ab \vb^\top\) be a singular value decomposition, with
\[
\ab=\operatorname{diag}(a_1,\dots,a_N), \qquad a_1\ge a_2\ge \cdots \ge 0,
\]
and set \({\bf z} = \vb^\top \xb \vb\). Since \(\xb \geq 0\), we also have \({\bf z}\geq 0\). By orthogonal invariance of \(e_2\),
\[
e_2(\mathbf m \xb \mathbf m^\top)=e_2(\ab {\bf z}  \ab).
\]
Using the identity (recall \eqref{eq:ineq+identity})
\[
e_2({\bf m})=\sum_{1\le i<j\le N}\det\bigl({\bf m}^{[i,j]}\bigr),
\]
we obtain
\[
e_2(\ab {\bf z} \ab)
=
\sum_{i<j}\det\bigl((\ab {\bf z} \ab)^{[i,j]}\bigr)
=
\sum_{i<j}a_i^2a_j^2 \det\bigl({\bf z}^{[i,j]}\bigr).
\]
Since \({\bf z}\geq 0\), all principal \(2\times 2\) minors are nonnegative, and therefore
\[
e_2(\mathbf m \xb \mathbf m^\top)
\le
a_1^2 a_2^2
\sum_{i<j}\det\bigl({\bf z}^{[i,j]}\bigr)
=
a_1^2a_2^2\, e_2({\bf z}).
\]
Finally, \(e_2({\bf z})=e_2(\xb)\) and
\[
a_1^2=\lambda_1(\mathbf m \mathbf m^\top)   ,
\qquad
a_2^2=\lambda_2(\mathbf m \mathbf m^\top).
\]

\end{proof}

\begin{lemma}\label{lem:moment_lambda1lambda2}
Let $\Xb$ be a $\mathrm{Sym}_{\geq0}(N)$-valued random matrix satisfying \eqref{eq:sfp_sym} with $\Bb\in\mathrm{Sym}_{\geq0}(N)$ a.s. 

Assume that there exists $p>0$ such that 
\[
\E[\mathrm{Tr}(\Ab\Ab^\top)^{2p}]<\infty,\quad \E\big[\mathrm{Tr}(\Bb)^{2p}\big]<\infty,
\]
\[
h_1(p)<1
\qquad\text{and}\qquad
h_2(2p)<1.
\]
Then
\[
\E\Big[(\lambda_1(\Xb)\lambda_2(\Xb))^p\Big]<\infty.
\]
\end{lemma}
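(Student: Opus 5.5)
Since $\Xb\geq 0$, we have $\lambda_1(\Xb)\lambda_2(\Xb)\le e_2(\Xb)$, so it suffices to show $\E[e_2(\Xb)^p]<\infty$. The plan is to work with the series representation of Theorem~\ref{thm:matrixperp},
\[
\Xb\stackrel{d}{=}\sum_{n\ge1}\Pi_{n-1}\Bb_n\Pi_{n-1}^\top,\qquad \Pi_{n-1}=\Ab_1\cdots\Ab_{n-1}.
\]
The condition $h_1(p)<1$ gives $\gamma<0$, so this series converges a.s. and represents the solution. We then estimate $e_2$ of the partial sums using the deterministic inequalities \eqref{eq:e2_le_tr2}--\eqref{eq:e2_congruence}. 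Iterating \eqref{eq:e2_add} over a finite sum of positive semidefinite matrices $\Yb_n:=\Pi_{n-1}\Bb_n\Pi_{n-1}^\top$ gives
\[
e_2\Big(\sum_n \Yb_n\Big)\le \sum_n e_2(\Yb_n)+\sum_{n<m}\mathrm{Tr}(\Yb_n)\mathrm{Tr}(\Yb_m)\le \sum_n e_2(\Yb_n)+\Big(\sum_n\mathrm{Tr}(\Yb_n)\Big)^2 .
\]
Passing to the limit by continuity of $e_2$ (Fatou), we get
\[
e_2(\Xb)\le \sum_n \lambda_{[1:2]}(\Pi_{n-1}\Pi_{n-1}^\top)\,e_2(\Bb_n)+\mathrm{Tr}(\Xb)^2 .
\]
Here the congruence bound \eqref{eq:e2_congruence} was used in the first sum.

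The second term is handled by Lemma~\ref{lem:moment_lambda1}. We have $\mathrm{Tr}(\Xb)^{2p}\le N^{2p}\lambda_1(\Xb)^{2p}$, so we need $\E[\lambda_1(\Xb)^{2p}]<\infty$. This requires $h_1(2p)<1$. We obtain it from $h_2(2p)<1$ together with $h_1(p)<1$, via the log-convexity of $s\mapsto \E[\lambda_1(\Mb_n)^s]$: since $h_1(0)=1$ and $h_1(p)<1$, convexity of $\log h_1$ alone does not bound $h_1(2p)$. So instead we use $\lambda_1^2\le \lambda_1\lambda_2\cdot(\lambda_1/\lambda_2)$, which is not helpful either. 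A more promising route is to avoid $\mathrm{Tr}(\Xb)^2$ altogether and split the cross terms more carefully.

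\textbf{Refined cross-term estimate.} For $n<m$ we write $\Yb_m=\Pi_{n-1}\Zb\Pi_{n-1}^\top$ with $\Zb=\Ab_n\cdots\Ab_{m-1}\Bb_m(\cdots)^\top$. Then $\mathrm{Tr}(\Yb_n)\mathrm{Tr}(\Yb_m)$ should be replaced by the mixed discriminant term $2\,e_2$-polarization $\frac12\big(e_2(\Yb_n+\Yb_m)-e_2(\Yb_n)-e_2(\Yb_m)\big)$. This mixed term is bilinear and also satisfies the congruence bound
\[
D(\Pi\xb\Pi^\top,\Pi\yb\Pi^\top)\le\lambda_{[1:2]}(\Pi\Pi^\top)\,D(\xb,\yb)\le \lambda_{[1:2]}(\Pi\Pi^\top)\mathrm{Tr}(\xb)\mathrm{Tr}(\yb).
\]
This follows from the same SVD/principal-minor computation as \eqref{eq:e2_congruence}, since mixed $2\times2$ minors of positive semidefinite matrices are nonnegative. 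Hence
\[
e_2(\Xb)\le \sum_{n\ge1}\lambda_{[1:2]}(\Pi_{n-1}\Pi_{n-1}^\top)\Big(e_2(\Bb_n)+\mathrm{Tr}(\Bb_n)\,\mathrm{Tr}(\Xb^{(n)})\Big),
\]
where $\Xb^{(n)}=\sum_{m>n}\Ab_n\cdots\Ab_{m-1}\Bb_m(\cdots)^\top$ is a copy of $\Ab_n\Xb'\Ab_n^\top$, independent of $\Pi_{n-1}$. Using $\mathrm{Tr}(\Ab_n\Xb'\Ab_n^\top)\le\lambda_1(\Ab_n\Ab_n^\top)\mathrm{Tr}(\Xb')$ and $\lambda_{[1:2]}(\Pi_{n-1}\Pi_{n-1}^\top)\lambda_1(\Ab_n\Ab_n^\top)\cdots$ (grouping via submultiplicativity), each summand becomes a product of an independent $\lambda_{[1:2]}(\Pi_{n-1}\Pi_{n-1}^\top)$-factor with a factor of $p$-th moment bounded by
\[
\E[\mathrm{Tr}(\Bb)^{2p}]+\E[\mathrm{Tr}(\Bb)^p\,\mathrm{Tr}(\Ab\Ab^\top)^p]\,\E[\mathrm{Tr}(\Xb)^p].
\]
This is finite by Cauchy--Schwarz and Lemma~\ref{lem:moment_lambda1}, which applies with $h_1(p)<1$.

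Finally we sum. For $p\le1$ we use $(\sum x_n)^p\le\sum x_n^p$. For $p>1$ we use Minkowski in $L^p$. In either case the $n$-th term has size $\E[\lambda_{[1:2]}(\Mb_{n-1})^p]^{\min(1,1/p)}$. By definition of $h_2$ with $s=2p$ (note $\lambda_{[1:2]}^{p}=\prod_{i\le2}\lambda_i^{2p/2}$) and submultiplicativity (Remark~\ref{rem:submult}), $\E[\lambda_{[1:2]}(\Mb_n)^p]\le C\rho^n$ for some $\rho<1$ once $h_2(2p)<1$. Hence the series is summable. I expect the main obstacle to be the mixed-discriminant congruence inequality and the bookkeeping of independence in the cross terms. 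That step is what allows us to avoid needing $h_1(2p)<1$.
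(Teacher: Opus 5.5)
Your final argument, the ``refined cross-term estimate'', is correct. The earlier bound through $\mathrm{Tr}(\Xb)^2$ should simply be deleted, since, as you noticed, it would need $h_1(2p)<1$. Your route is genuinely different from the paper's.

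The paper does not expand the full series. It iterates the equation a fixed number $m$ of times, $\Xb\stackrel{d}{=}\Pi_m\Xb\Pi_m^\top+{\bf C}_m$, and bounds the cross term crudely by $\lambda_1({\bf M}_m)\mathrm{Tr}(\Xb)\mathrm{Tr}({\bf C}_m)$ using only \eqref{eq:e2_add} and \eqref{eq:tr_congruence}. It then picks $m$ with $c_p\E[\lambda_{[1:2]}({\bf M}_m)^p]<\tfrac12$ and absorbs $\E[\lambda_{[1:2]}({\bf M}_m)^p]\,\E[e_2(\Xb)^p]$ into the left-hand side. Because $m$ is fixed, $\lambda_1({\bf M}_m)$ need not be small, and only its $2p$-th moment enters, via H\"older.

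You instead expand $e_2$ along the whole series. Your key new ingredient is the polarized congruence bound $D(\Pi\xb\Pi^\top,\Pi\yb\Pi^\top)\le\lambda_{[1:2]}(\Pi\Pi^\top)D(\xb,\yb)$. It holds by the same singular value/principal minor computation as \eqref{eq:e2_congruence}, because each mixed term $x_{ii}y_{jj}+x_{jj}y_{ii}-2x_{ij}y_{ij}$ is nonnegative for positive semidefinite $\xb,\yb$. This bound places every cross term under the prefix factor $\lambda_{[1:2]}({\bf M}_{n-1})$. That is exactly what lets $h_2(2p)<1$ alone control the infinite sum; the crude bound would produce $\lambda_1({\bf M}_{n-1})^2$ and bring back $h_1(2p)$.

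What each route buys:
\begin{itemize}
\item Yours gives an explicit bound with no self-referential step.
\item The paper's absorption must avoid presupposing $\E[e_2(\Xb)^p]<\infty$. It does so by truncating at level $K$, which needs some care: the pointwise inequality $\min\{\lambda x,K\}\le\lambda\min\{x,K\}$ fails when $\lambda<1$.
\item In exchange, the paper uses only the inequalities already listed, with no bilinear extension.
\end{itemize}

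When writing yours up, tidy three small points.
\begin{itemize}
\item State that the given $\Xb$ coincides in law with the series by the uniqueness part of Theorem~\ref{thm:matrixperp}. The logarithmic moments follow from the $2p$-moment assumptions, and $\gamma\le p^{-1}\log h_1(p)<0$ by Jensen.
\item The factor $\lambda_1(\Ab_n\Ab_n^\top)$ simply belongs to the factor depending on $(\Ab_n,\Bb_n,\Xb')$, which is independent of $\Pi_{n-1}$; no ``grouping'' is needed.
\item The geometric bound $\E[\lambda_{[1:2]}({\bf M}_n)^p]\le C\rho^n$ also uses $\E[\lambda_{[1:2]}({\bf M}_r)^p]<\infty$ for the finitely many remainders $r$. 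This follows from $\lambda_{[1:2]}(\Ab\Ab^\top)\le\mathrm{Tr}(\Ab\Ab^\top)^2$ and submultiplicativity.
\end{itemize}
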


\begin{proof}
For $\xb\in\mathrm{Sym}_{\geq0}(N)$, we have 
\[
\lambda_1(\xb)\lambda_2(\xb)\leq e_2(\xb):=\sum_{1\le i<j\le N}\lambda_i(\xb)\lambda_j(\xb),
\]
hence it suffices to prove
\begin{equation}\label{eq:goal_e2}
\E\big[e_2(\Xb)^p\big]<\infty.
\end{equation}

Let $(\Ab_n,\Bb_n)_{n\ge1}$ be iid copies of $(\Ab,\Bb)$ and set
\[
\Pi_n:=\Ab_n\cdots \Ab_1,\qquad {\bf M}_n:=\Pi_n \Pi_n^\top\in\mathrm{Sym}_{\geq0}(N).
\]
For $m\ge1$ define
\[
{\bf C}_m:=\sum_{j=1}^m \Pi_{j-1}\Bb_j\Pi_{j-1}^\top \in \mathrm{Sym}_{\geq0}(N),
\qquad \Pi_0:=I.
\]
Iterating \eqref{eq:sfp_sym} yields
\begin{equation}\label{eq:block}
\Xb \stackrel{d}{=} \Pi_m\,\Xb\,\Pi_m^\top + {\bf C}_m,
\end{equation}
where $(\Pi_m,{\bf C}_m)$ is independent of $\Xb$.
Denote the right-hand side of \eqref{eq:block} by $\Xb'$. Apply \eqref{eq:e2_add} to $\xb=\Pi_m\Xb\Pi_m^\top$ and $\yb={\bf C}_m$, then use
\eqref{eq:e2_congruence} and \eqref{eq:tr_congruence} with ${\bf m}=\Pi_m$.
Writing ${\bf M}_m=\Pi_m\Pi_m^\top$, we obtain
\begin{equation}\label{eq:e2_basic}
e_2(\Xb')
\le \lambda_{[1:2]}({\bf M}_m)\, e_2(\Xb)
+e_2({\bf C}_m)
+\lambda_{1}({\bf M}_m)\,\mathrm{Tr}(\Xb)\,\mathrm{Tr}({\bf C}_m).
\end{equation}

Let
\[
c_p:=
\begin{cases}
1, & 0<p\le 1,\\
3^{p-1}, & p\ge 1,
\end{cases}
\qquad\text{so that}\qquad
(x+y+z)^p\le c_p(x^p+y^p+z^p)\ \ \forall\, x,y,z\ge0.
\]
Raising \eqref{eq:e2_basic} to the power $p$, applying truncation $\min\{\cdot, K\}$ and taking expectations on both sides gives for positive $K$,
\begin{align}\label{eq:e2_expect}
\begin{split}
\E[\min\{e_2(\Xb)^p,K\}]
\le c_p\Big(&
\E[\lambda_{[1:2]}({\bf M}_m)^p]\E[\min\{e_2(\Xb)^p,K\}]
+ \E[e_2({\bf C}_m)^p]  \\
&+ \E[\lambda_{1}({\bf M}_m)^p\,\mathrm{Tr}({\bf C}_m)^p]\ \E[\mathrm{Tr}(\Xb)^p]
\Big).
\end{split}
\end{align}
Since $h_2(2p)<1$, there exists $m_0\ge1$ such that
$\E[\lambda_{[1:2]}({\bf M}_{m_0})^p]<1$.
Using the decomposition $\Pi_{k \cdot m_0}=\widetilde\Pi_{m_0}^{(k)}\cdots \widetilde\Pi_{m_0}^{(1)}$ into iid blocks with $\tilde{\Pi}_{m_0}^{(i)}\stackrel{d}{=}\Pi_{m_0}$,
one has for all $k\ge1$ the submultiplicative bound (see Remark \ref{rem:submult})
\[
\lambda_{[1:2]}({\bf M}_{k\, m_0})
= \lambda_{[1:2]}(\Pi_{k \,m_0}\Pi_{k\, m_0}^\top)
\le \prod_{j=1}^k \lambda_{[1:2]}(\widetilde\Pi_{m_0}^{(j)}(\widetilde\Pi_{m_0}^{(j)})^\top),
\]
and hence, by independence,
\[
\E[\lambda_{[1:2]}({\bf M}_{k\, m_0})^p]\le \E[\lambda_{[1:2]}({\bf M}_{m_0})^p]^k.
\]
Choosing $k$ large enough and setting $m:=k\, m_0$ yields
\begin{equation}\label{eq:choose_m}
c_p\,\E[\lambda_{[1:2]}({\bf M}_m)^p]<\frac12.
\end{equation}
For this $m$, we can absorb the first term on the right-hand side of \eqref{eq:e2_expect} into the left, obtaining
\begin{equation}\label{eq:e2_absorb}
\E[\min\{e_2(\Xb)^p,K\}]
\le 2c_p\Big(
\E[e_2({\bf C}_m)^p]
+ \E[\lambda_{1}({\bf M}_m)^p\,\mathrm{Tr}({\bf C}_m)^p]\ \E[\mathrm{Tr}(\Xb)^p]
\Big).
\end{equation}
Thus it remains to show that the expectations on the right-hand side are finite.

The assumption $\E[\mathrm{Tr}(\Ab \Ab^\top)^{2p}]<\infty$ implies that $\E[\lambda_{1}(\Ab\Ab^\top)^{2p}]<\infty$, which, by submultiplicativity, gives $\E[\lambda_{1}({\bf M}_m)^{2p}]<\infty$ for all $m$. By definition of ${\bf C}_m$ and \eqref{eq:tr_congruence},
\[
\mathrm{Tr}({\bf C}_m)=\sum_{j=1}^m \mathrm{Tr}(\Pi_{j-1}\Bb_j\Pi_{j-1}^\top)
\le \sum_{j=1}^m \lambda_{1}({\bf M}_{j-1})\,\mathrm{Tr}(\Bb_j).
\]
Since $m$ is fixed and $\E[\mathrm{Tr}(\Bb)^{2p}]<\infty$, it follows that
$\E[\mathrm{Tr}({\bf C}_m)^{2p}]<\infty$ (using finiteness of the relevant moments of ${\bf M}_{j-1}$).
Consequently, by \eqref{eq:e2_le_tr2},
\[
\E[e_2({\bf C}_m)^p]\le 2^{-p}\E[\mathrm{Tr}({\bf C}_m)^{2p}]<\infty.
\]
Moreover, $\E[\lambda_{1}({\bf M}_m)^p\,\mathrm{Tr}({\bf C}_m)^p]<\infty$ follows from H\"older's inequality and the
already established finiteness of $\E[\lambda_{1}({\bf M}_m)^{2p}]$ and $\E[\mathrm{Tr}({\bf C}_m)^{2p}]$.

By Lemma \ref{lem:moment_lambda1}, the condition $h_1(p)<1$ along with $\E[\mathrm{Tr}({\bf C}_{m})^p]<\infty$, implies finiteness of the $p$-moment largest eigenvalue, which means that we also have
\begin{equation}\label{eq:trX_finite}
\E[\mathrm{Tr}(\Xb)^p]<\infty.
\end{equation}

All terms on the right-hand side of \eqref{eq:e2_absorb} are finite, hence for any positive $K$,  the upper bound on $\E[\min\{e_2(\Xb)^p,K\}]$ is finite and independent of $K$. Letting $K\to\infty$, we obtain 
\[
\E\big[(\lambda_1(\Xb)\lambda_2(\Xb))^p\big]\le \E[e_2(\Xb)^p]<\infty.
\]
\end{proof}

\begin{lemma}\label{lem:lambda2_littleo_from_moment}
Let $\Xb\in\mathrm{Sym}_{\geq0}(N)$ be a random matrix with ordered eigenvalues
$\lambda_1(\Xb)\ge \lambda_2(\Xb)\ge \cdots \ge 0$. Fix $\alpha>0$.
Assume that there exist constants $c>0$ and $t_0>0$ such that
\[
\P(\lambda_1(\Xb)>t) \ge c\,t^{-\alpha},\qquad \forall\,t\ge t_0,
\]
and 
\[
\E\left[(\lambda_1(\Xb)\lambda_2(\Xb))^{\alpha/2}\right]<\infty.
\]
Then
\[
\P(\lambda_2(\Xb)>t)=o\big(\P(\lambda_1(\Xb)>t)\big),\qquad \mbox{as }t\to\infty.
\]
\end{lemma}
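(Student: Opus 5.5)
Since $\lambda_1\ge\lambda_2$, the event $\{\lambda_2(\Xb)>t\}$ forces $\lambda_1(\Xb)\lambda_2(\Xb)>t^2$. So the plan is to bound $\P(\lambda_2(\Xb)>t)$ by the tail of the product $Y:=\lambda_1(\Xb)\lambda_2(\Xb)$ at level $t^2$. We then show that this tail decays strictly faster than $t^{-\alpha}$, using only the finiteness of the moment $\E[Y^{\alpha/2}]$. The lower bound on $\P(\lambda_1(\Xb)>t)$ then finishes the argument.

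Concretely, the steps are as follows.

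\begin{itemize}
\item \emph{Inclusion.} We record the inclusion $\{\lambda_2(\Xb)>t\}\subset\{Y>t^2\}$, which holds because $\lambda_1(\Xb)\ge\lambda_2(\Xb)\ge0$.

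\item \emph{Markov-type bound with a vanishing factor.} We use the standard estimate for a nonnegative random variable with $\E[Y^{\alpha/2}]<\infty$:
\[
s^{\alpha/2}\,\P(Y>s)\le \E\bigl[Y^{\alpha/2}\,\mathbf 1_{\{Y>s\}}\bigr]\longrightarrow 0,\qquad s\to\infty,
\]
where the convergence follows from dominated convergence. Applying this with $s=t^2$ gives
\[
t^{\alpha}\,\P(\lambda_2(\Xb)>t)\le \E\bigl[Y^{\alpha/2}\mathbf 1_{\{Y>t^2\}}\bigr]=o(1).
\]

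\item \emph{Comparison with $\lambda_1$.} Dividing by the assumed lower bound $\P(\lambda_1(\Xb)>t)\ge c\,t^{-\alpha}$ for $t\ge t_0$ yields
\[
\frac{\P(\lambda_2(\Xb)>t)}{\P(\lambda_1(\Xb)>t)}\le c^{-1}\,\E\bigl[Y^{\alpha/2}\mathbf 1_{\{Y>t^2\}}\bigr]\to0,
\]
which is exactly the claim.
\end{itemize}

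There is no real obstacle here. The only point needing care is to use the tail-integral version of Markov's inequality rather than the plain one. Plain Markov only gives $\P(Y>t^2)\le \E[Y^{\alpha/2}]\,t^{-\alpha}$, which is $O(t^{-\alpha})$ rather than $o(t^{-\alpha})$. The extra vanishing factor $\E[Y^{\alpha/2}\mathbf 1_{\{Y>t^2\}}]$ is what upgrades the comparison from $O$ to $o$.
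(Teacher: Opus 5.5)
Your proposal is correct and matches the paper's proof: the same inclusion $\{\lambda_2(\Xb)>t\}\subset\{\lambda_1(\Xb)\lambda_2(\Xb)>t^2\}$, the same fact that $s^{\alpha/2}\P(Y>s)\to0$ when $\E[Y^{\alpha/2}]<\infty$, and the same division by $c\,t^{-\alpha}$. The paper only asserts $s^{\alpha/2}\P(Y>s)\to0$, whereas you justify it via the bound $\E\bigl[Y^{\alpha/2}\mathbf 1_{\{Y>s\}}\bigr]$ and dominated convergence.
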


\begin{proof}
Let $Y:=\lambda_1(\Xb)\lambda_2(\Xb)\ge 0$. Since $\lambda_1(\Xb)\ge \lambda_2(\Xb)$,
\[
\{\lambda_2(\Xb)>t\}\subseteq \{\lambda_1(\Xb)\lambda_2(\Xb)>t^2\}=\{Y>t^2\}.
\]
Therefore, for all $t>0$,
\[
\P(\lambda_2(\Xb)>t)\le \P(Y>t^2).
\]
Since $\E[Y^{\alpha/2}]<\infty,$ we have $\lim_{t\to \infty} t^{\alpha/2}\P(Y>t) = 0$. Thus,
\[
\frac{\P(\lambda_2(\Xb)>t)}{\P(\lambda_1(\Xb)>t)} \leq \frac{ \P(Y>t^2)}{c\, t^{-\alpha}} = c^{-1} (t^2)^{\alpha/2}\P(Y>t^2) \to 0.
\]
\end{proof}

\subsection{Proof of main theorem}\label{sec:proof_tails}

\begin{proof}[Proof of Theorem \ref{thm:tailsmat}]

The existence and uniqueness follows from Corollary \ref{cor:matrixperp_inv}, by applying Jensen's inequality and domination of $\log(x)$ by $x^\alpha$. 

By (iii-a) and (iii-d) and Lemma \ref{lem:hi_invariance}, we have 
\[
h_1(\eta)=1\quad\mbox{and} \quad h_2(\eta)<1. 
\]
Since $h_1$ is a strictly convex function with $h_1(0)=1=h_1(\eta)$, we have $h_1(\eta/2)<1$. Moreover, $\E[\mathrm{Tr}(\Ab)^\eta]\leq \max\{1, N^{\eta-1}\} \sum_{i=1}^N\E[\Ab_{ii}^\eta]<\infty$ and similarly $\E[\mathrm{Tr}(\Bb)^\eta]<\infty$. Thus, by Lemma \ref{lem:moment_lambda1lambda2} applied with $p=\eta/2$, we obtain 
\[
\E[(\lambda_1(\Xb)\lambda_2(\Xb))^{\eta/2}]<\infty. 
\]
By Lemma \ref{lem:subperp}, 
\[
 \Xb_{11}\stackrel{d}{=} \Ab_{11}\Xb_{11}+\Bb_{11},\qquad (\Ab_{11}, \Bb_{11})\mbox{ and }\Xb_{11}\mbox{ are independent}.
\]
Under (iii-a)-(iii-c) and (iv)-(v), the univariate tail result \cite[Theorem 4.1]{Gol91}, gives the tail asymptotics \eqref{eq:limit11} of $\Xb_{11}$. 

Since $\lambda_{1}(\Xb)\geq \Xb_{11}$, we have 
\[
\P(\lambda_{1}(\Xb)>t)\geq \P(\Xb_{11}>t) \sim c\, t^{-\eta}. 
\]
By Lemma \ref{lem:lambda2_littleo_from_moment}, 
\[
\P(\lambda_2(\Xb)>t) = o\big(\P(\lambda_1(\Xb)>t)\big). 
\]
In particular,
\[
\mu_\Xb\big( (t,\infty)\big) = \frac{1}{N}\sum_{k=1}^N \P(\lambda_k(\Xb)>t) \sim \frac{1}{N} \P(\lambda_1(\Xb)>t). 
\]
We have 
\[
\Xb_{11} = \sum_{k=1}^N \lambda_k(\Xb) \Ub_{1k}^2 = \Ub_{11}^2\lambda_1(\Xb)+R,
\]
where $\Ub$ and $(\lambda_i(\Xb))_{i=1}^N$ are independent. 
Since $R\leq \lambda_2(\Xb)$, we have 
\begin{align}\label{eq:smallR}
\P(R>t) =o\big(\P(\lambda_1(\Xb)>t)\big)=o\big(\P(\Xb_{11}>t)\big).
\end{align}
Next, since $R\ge 0$,
\begin{equation}\label{eq:lower}
\P(\Ub_{11}^2\lambda_1(\Xb)>t)\le \P(\Ub_{11}^2\lambda_1(\Xb)+R>t)=\P(\Xb_{11}>t).
\end{equation}
We have for any $\varepsilon\in(0,1)$,
\begin{equation}\label{eq:upper_split}
\P(\Ub_{11}^2\lambda_1(\Xb)>t) \geq \P(\Xb_{11}>t/(1-\eps))-\P(R>\varepsilon t/(1-\eps)).
\end{equation}
Divide \eqref{eq:upper_split} by $\P(\Xb_{11}>t)$ and use \eqref{eq:smallR} and the regular variation of the tail of $\Xb_{11}$, to obtain
\[
 \liminf_{t\to\infty}\frac{\P(\Ub_{11}^2\lambda_1(\Xb)>t)}{\P(\Xb_{11}>t)}\geq (1-\eps)^\eta. 
\]
Letting $\eps\to0^+$ and combining with \eqref{eq:lower} gives
\[
\P(\Xb_{11}>t) = \P( \lambda_1(\Xb) \Ub_{11}^2 + R>t) \sim \P( \Ub_{11}^2\lambda_1(\Xb) >t). 
\]

Fix $\alpha>0$. Let $Y$ be a positive random variable with $\E[Y^{\alpha+\eps}]<\infty$ for some $\eps>0$. By the Converse Breiman Lemma, 
\cite[Theorem 4.2]{ConvBreiman}, under the assumption $\E[Y^{\alpha+i \theta}]\neq 0$ for all $\theta\in \R$, one has the implication 
\[
t\mapsto \P(Y X>t) \in \mathcal{R}_{-\alpha} \implies  t\mapsto \P(X>t) \in \mathcal{R}_{-\alpha},
\]
where $\mathcal{R}_\rho$ denotes  the class of regularly
varying functions with index $\rho$. In such case 
\[
\P(Y X>t) \sim \E[Y^\alpha] \P(X>t). 
\]
We apply this result with $\alpha=\eta$, $X=\lambda_1(\Xb)$ and $Y=\Ub_{11}^2\sim\mathrm{Beta}(\frac{1}{2},\frac{N-1}{2})$.  We have for all $\eta>-1/2$ and $\theta\in\R$,
\[
\E[\Ub_{11}^{2\eta+2\theta i}] = \frac{\Gamma\left(N/2\right)\, \Gamma\left(\eta+i\theta+\tfrac12\right)}{\Gamma\left(\frac12\right)\, \Gamma\left(\eta+i\theta +N/2\right)},
\]
which is non-zero, since $\Gamma$ function has no zeros and the only singularities are the poles at $0, -1, -2,\ldots$ (here $\eta+i\theta +N/2$ is never a pole). Thus, we obtain 
\[
\P(\lambda_1(\Xb)>t) \sim \frac{1}{\E[\Ub_{11}^{2\eta}]} \P(\Ub_{11}^2 \lambda_1(\Xb)>t) \sim \frac{1}{\E[\Ub_{11}^{2\eta}]} \P(\Xb_{11}>t).
\]
\end{proof}

\begin{example}
Assume that  $\Ab\sim \mathcal{B}'_{\alpha,\alpha+\beta}(N)$. Then, $\Ab$ is orthogonally invariant and we have 
\[
h_k(s)
:= \E\big[\det(\Ab^{[k]})^{s/k}\big].
\]
Among the functions $h_k$, only $h_1$ and $h_2$ appear in condition (iii-a) and (iii-d) of
Theorem \ref{thm:tailsmat}.

By Lemma \ref{lem:prime_subm},  $\Ab\sim \mathcal{B}'_{\alpha,\alpha+\beta}(N)$ with
$\alpha,\beta>d_N-1=(N-1)/2$, then
$\Ab^{[k]}\sim \mathcal{B}'_{\alpha,\alpha+\beta-(N-k)/2}(k)$. 
Then,
\begin{align*}
 \\
h_k(s)&= \int_{\mathrm{Sym}_{>0}(k)}
\frac{\det(\xb)^{s/k}}{\mathrm{B}_k(\alpha,\alpha+\beta-(N-k)/2)}
\frac{\det(\xb)^{\alpha-d_k}}{\det(\id_k+\xb)^{2\alpha+\beta-(N-k)/2}}\,\dd\xb \\
&= \frac{\mathrm{B}_k\!\left(\alpha+s/k,\ \alpha+\beta-(N-k)/2-s/k\right)}
{\mathrm{B}_k\!\left(\alpha,\ \alpha+\beta-(N-k)/2\right)}.
\end{align*}
In particular, $h_k(s)=1$ if and only if
\[
s\in\left\{0,\ k\Bigl(\beta-\frac{N-k}{2}\Bigr)\right\}.
\]
Set
\[
\eta_k := k\Bigl(\beta-\frac{N-k}{2}\Bigr).
\]

Since $\beta>(N-1)/2$, we have $\eta_1<\eta_2$. Hence condition (iii-d) holds by convexity of $h_2$, since
\[
h_2(\eta_1)=\E\left[\det(\Ab^{[2]})^{\eta_1/2}\right] < 1 = h_2(\eta_2).
\]
Therefore, by Theorem \ref{thm:tailsmat},
\[
\mu_\Xb\big((t,+\infty)\big) \sim c\, t^{-\eta_1},\qquad t\to+\infty,
\]
where the constant $c$ is explicit.

The remaining indices $\eta_k$ naturally arise as tail indices for other related quantities. For instance, one may show by direct calculation that
\[
\P\big(\lambda_i(\Xb^{[k]})>t\big)\sim c_{i,k}\, t^{-\eta_i}.
\]
Moreover, since a solution $\Xb$ to \eqref{eq:BetaPerp} is $\mathcal{B}'_{\alpha,\beta}(N)$-distributed, we have
\[
\Xb^{[k]}\sim \mathcal{B}'_{\alpha,\beta-(N-k)/2}(k).
\]
Let $C_k\subset \mathrm{Sym}_{>0}(k)$ be a Borel set bounded away from $0$, and let
$\nu_k(\dd\xb)=\det(\xb)^{-d_k}\dd\xb$, $d_k = (k+1)/2$, denote the $\mathrm{GL}(k,\R)$-invariant measure on $\mathrm{Sym}_{>0}(k)$.
Then, as $t\to\infty$,
\begin{align*}
\P(\Xb^{[k]}\in t\,C_k)
&=\frac{1}{\mathrm{B}_k(\alpha,\beta-\frac{N-k}{2})}
\int_{t\,C_k}\frac{\det(\xb)^\alpha}{\det(\id_k+\xb)^{\alpha+\beta-\frac{N-k}{2}}}\,\nu_k(\dd\xb) \\
&=\frac{1}{\mathrm{B}_k(\alpha,\beta-\frac{N-k}{2})}
\int_{C_k}\frac{\det(t\yb)^\alpha}{\det(\id_k+t\yb)^{\alpha+\beta-\frac{N-k}{2}}}\,\nu_k(\dd\yb) \\
&= \frac{t^{k \,\alpha}}{t^{k(\alpha+\beta-\frac{N-k}{2})}}\,
\frac{1}{\mathrm{B}_k(\alpha,\beta-\frac{N-k}{2})}
\int_{C_k}\frac{\det(\yb)^\alpha}{\det(t^{-1}\id_k+\yb)^{\alpha+\beta-\frac{N-k}{2}}}\,\nu_k(\dd\yb) \\
&\sim t^{-\eta_k}\,
\frac{1}{\mathrm{B}_k(\alpha,\beta-\frac{N-k}{2})}
\int_{C_k}\det(\yb)^{-\beta+\frac{N-k}{2}}\,\nu_k(\dd\yb),
\end{align*}
where the last step follows from Lebesgue’s dominated convergence theorem.

Since our primary goal is to study the expected empirical spectral distribution $\mu_\Xb$, we do not pursue these related tail asymptotics further here. 
\end{example}

\section{Weak convergence}\label{sec:weak}

In this subsection, we study weak convergence of the expected empirical eigenvalue distributions of matrix perpetuities with the size $N\to\infty$ to the corresponding distribution of a free perpetuity studied in \cite{FreePerp}.

In order to motivate developments in this section, we make further observations in the framework of the Example \ref{ex:1}. 
\begin{example}
    We are interested in the properties of the solution from Example \ref{ex:1} as $N\to+\infty$. We choose $\alpha=\alpha_N$ and $\beta=\beta_N$ so that the limiting objects are non-trivial. Let us first describe the weak convergence of the empirical spectral distribution of $\mathcal{B}'_{\alpha_N, \beta_N}(N)$. We discuss the convergence of empirical spectral distribution of Beta matrices in the appendix.

Assume that 
\begin{align}\label{eq:alphabetaN}
\lim_{N\to+\infty}\frac{\alpha_N}{N} =  \frac{a}{2}\quad\mbox{and}\quad  \lim_{N\to+\infty}\frac{\beta_N}{N} =  \frac{b}{2},
\end{align}
where  $(a,b)\in[1,+\infty)\times[1,+\infty)$ with $ab>1$. 
The empirical spectral distribution of $\mathcal{B}'_{\alpha_N, \beta_N}(N)$ converges weakly almost surely to the probability measure $f\mathcal{B}'_{a,b}(\dd x)=f_{a,b}(x)\dd x$ defined by
\begin{align}\label{eq:fab}
f_{a,b}(x)=\frac{(b-1)\sqrt{(\gamma_+-x)(x-\gamma_-)}}{2\pi x(1+x)}I_{[\gamma_-,\gamma_+]}(x)
\end{align}
with
\[
\gamma_\pm = \left(\frac{\sqrt{ab}\pm\sqrt{a+b-1}}{b-1}\right)^2.
\]
The same convergence holds true for the expected empirical spectral distribution.

   Assume that for each $N\in\mathbb{N}$, $\Ab^{(N)}\sim \mathcal{B}'_{\alpha_N, \alpha_N+\beta_N}(N)$, where $(\alpha_N,\beta_N)$ are chosen according to \eqref{eq:alphabetaN}. 
   
   Let 
    \begin{align*}
\Xb^{(N)} \stackrel{d}{=} (\Ab^{(N)})^{1/2} \Xb^{(N)}(\Ab^{(N)})^{1/2}+\Ab^{(N)},\qquad \Ab^{(N)}\mbox{ and }\Xb^{(N)} \mbox{ are independent}.
\end{align*}
From Example \ref{ex:1} we know that $\Xb^{(N)}\sim\mathcal{B}'_{\alpha_N, \beta_N}(N)$. Thus, the expected empirical spectral distributions satisfy
\[
\mu_{\Ab^{(N)}}\stackrel{w}{\longrightarrow} f\mathcal{B}'_{a,a+b}\quad\mbox{and}\quad\mu_{\Xb^{(N)}}\stackrel{w}{\longrightarrow} f\mathcal{B}'_{a,b}.
\]

In \cite{FreePerp} we studied free perpetuities, and the related free perpetuity is defined as follows.
 Assume that $\A\sim f\mathcal{B}'_{a,a+b}$. Then, $\X\sim f\mathcal{B}'_{a,b}$ is the unique solution to
\begin{align}\label{eq:new}
	\X \stackrel{d}{=} \A^{1/2}\X\A^{1/2}+\A,\qquad \A\mbox{ and }\X\mbox{ are freely independent}.
\end{align}
Thus we obtain that empirical spectral distributions of matrix perpetuities converge weakly to the distribution of the corresponding free perpetuity.

\end{example}

\begin{thm}\label{thm:matmodel}
Fix $\A,\B$ in a $W^*$ probability space $(\mathcal{A},\tau)$. Assume that for each $N\in\mathbb{N}$,
\begin{enumerate}
\item[(i)] $\Ab^{(N)}, \Bb^{(N)}\in\mathrm{Sym}_{\geq0}(N)$,
\item[(ii)] $(\ub_N\Ab^{(N)} \ub_N^\top, \ub_N \Bb^{(N)} \ub_N^\top)\stackrel{d}{=}(\Ab^{(N)},\Bb^{(N)})$ for any orthogonal matrix $\ub_N$, 
\item[(iii)] Assume that for any fixed $L\geq 1$ families 
\[
\left\{\left(\left(\Ab^{(N)}_1\right)^{1/2},\Bb^{(N)}_1\right),\ldots,\left(\left(\Ab^{(N)}_L\right)^{1/2},\Bb^{(N)}_L\right)\right\}
\]
of independent copies of $(\Ab^{(N)},\Bb^{(N)})$ are asymptotically free, i.e., converge in noncommutative distribution to the family
\[\left\{(\A_1^{1/2},\B_1),\ldots,(\A_L^{1/2},\B_L)\right\}\]
of free copies of $(\A^{1/2},\B)$.
\item[(iv)]  $\tau(\A)<1$. 
\end{enumerate}
Then, for sufficiently large $N$, there exists a unique solution to 
    \begin{align}\label{eq:freeperp_}
\Xb^{(N)}\stackrel{d}{=}(\Ab^{(N)})^{1/2} \Xb^{(N)} (\Ab^{(N)})^{1/2} +\Bb^{(N)},\quad (\Ab^{(N)},\Bb^{(N)})\mbox{ and }\Xb^{(N)}\mbox{ are independent},
\end{align}
Moreover,  the expected spectral distribution of $\Xb^{(N)}$ converges weakly to the distribution of $\X$, which is a unique solution to
    \begin{align}\label{eq:freeperp2}
     \X\stackrel{d}{=}\A^{1/2} \X \A^{1/2}+\B,\qquad (\A,\B)\mbox{ and }\X\mbox{ are *-free.}    
    \end{align}
    
\end{thm}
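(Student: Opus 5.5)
Existence for large $N$ will come from Corollary~\ref{cor:matrixperp_inv}, using the fact that $\E[\mathrm{tr}\,\Ab^{(N)}]$ approaches $\tau(\A)$. The approximation to $\X$ will come from truncating the series representation~\eqref{eq:series1}, controlled uniformly in $N$ by the subcritical condition $\tau(\A)<1$. The steps are as follows.

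\emph{Existence.} By orthogonal invariance, $\E[\Ab^{(N)}_{11}]=\E[\mathrm{tr}\,\Ab^{(N)}]$. Assumption (iii) gives convergence in noncommutative distribution, so $\E[\mathrm{tr}\,\Ab^{(N)}]=\E[\mathrm{tr}((\Ab^{(N)})^{1/2})^2]\to\tau(\A)<1$. Jensen's inequality then yields
\[
\gamma_N=\E[\log \Ab^{(N)}_{11}]\le \log \E[\Ab^{(N)}_{11}]<0
\]
for all $N\ge N_0$. Since $\Bb^{(N)}$ has finite moments, $\E[\log^+(\Bb^{(N)}\Bb^{(N)})_{11}]<\infty$. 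Corollary~\ref{cor:matrixperp_inv} therefore gives existence and uniqueness, and Theorem~\ref{thm:matrixperp} provides the series
\[
\Xb^{(N)}\stackrel{d}{=}\sum_{n\ge1}\Pi_{n-1}^{(N)}\Bb^{(N)}_n(\Pi_{n-1}^{(N)})^\top,\qquad \Pi^{(N)}_{n-1}=(\Ab^{(N)}_1)^{1/2}\cdots(\Ab^{(N)}_{n-1})^{1/2}.
\]

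\emph{Truncation and uniform tail control.} Let $\Xb^{(N)}_L$ be the sum of the first $L$ terms and $\Rb^{(N)}_L\ge0$ the remainder. I will bound the remainder by
\[
\E[\mathrm{tr}\,\Rb^{(N)}_L]=\sum_{n>L}\E\big[\mathrm{tr}(\Bb^{(N)}_n\, \Pi_{n-1}^\top\Pi_{n-1})\big].
\]
Here $\Bb_n$ is independent of the product, and $\Pi_{n-1}^\top\Pi_{n-1}$ is orthogonally invariant by the induction argument of Lemma~\ref{lem:lyap}. Hence the expectation factorizes as $\E[\mathrm{tr}\,\Bb^{(N)}]\,\E[\mathrm{tr}(\Pi_{n-1}\Pi_{n-1}^\top)]$. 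By Corollary~\ref{prop:39} with $k=1$, iterated,
\[
\E[\mathrm{tr}(\Pi_{n-1}\Pi_{n-1}^\top)]=\E[(\Pi_{n-1}\Pi_{n-1}^\top)_{11}]=\E[\Ab_{11}^{(N)}]^{n-1}.
\]
So $\E[\mathrm{tr}\,\Rb^{(N)}_L]\le \E[\mathrm{tr}\,\Bb^{(N)}]\,q_N^{L}/(1-q_N)$ with $q_N=\E[\mathrm{tr}\,\Ab^{(N)}]\le q<1$ for large $N$, and $\E[\mathrm{tr}\,\Bb^{(N)}]\to\tau(\B)$ is bounded. Thus $\sup_{N\ge N_0}\E[\mathrm{tr}\,\Rb^{(N)}_L]\to0$ as $L\to\infty$.

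Since all summands are positive semidefinite, I will compare spectra of $\Xb^{(N)}_L$ and $\Xb^{(N)}_L+\Rb^{(N)}_L$, using a metric for weak convergence such as a Lipschitz-bounded test-function distance or Lévy distance. For $f$ bounded and Lipschitz, the standard trace-norm perturbation estimate is
\[
|\mathrm{tr} f(\Xb+\Rb)-\mathrm{tr} f(\Xb)|\le \|f\|_{Lip}\,\mathrm{tr}|\Rb|=\|f\|_{Lip}\,\mathrm{tr}\,\Rb,
\]
valid because $\Rb\ge0$. This bound holds for Lipschitz $f$ by the Lidskii/Hoffman–Wielandt-type $\ell^1$ inequality. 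The same argument on the free side gives $\X_L\to\X$ in distribution, where $\X_L=\sum_{n\le L}\A_1^{1/2}\cdots\A_{n-1}^{1/2}\B_n\A_{n-1}^{1/2}\cdots\A_1^{1/2}$. This uses $\tau(\text{tail})\le\tau(\B)\tau(\A)^{L}/(1-\tau(\A))$, which follows from traciality and freeness ($\tau$ of $\B_n$ times the product factorizes). One also needs that $\X$, the unique solution from Theorem~\ref{thm:uniqe_free}, is the law of this convergent series; I expect this is the construction in \cite{FreePerp} for $\tau(\A)<1$, and otherwise I will verify directly that the limit of $\X_L$ satisfies~\eqref{eq:freeperp2} and invoke uniqueness.

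\emph{Fixed $L$.} For fixed $L$, $\Xb^{(N)}_L$ is a noncommutative polynomial in the $L$ independent pairs $((\Ab_i^{(N)})^{1/2},\Bb_i^{(N)})$. By (iii), its moments $\E[\mathrm{tr}(\Xb_L^{(N)})^k]$ converge to $\tau(\X_L^k)$. Since $\X_L$ is bounded, its law is determined by moments, so $\mu_{\Xb^{(N)}_L}\to\mu_{\X_L}$ weakly. A $3\varepsilon$ argument combining this with the two uniform truncation estimates then yields $\mu_{\Xb^{(N)}}\to\mu_\X$.

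The main obstacle is the uniform-in-$N$ remainder estimate. It rests on the exact identity $\E[\mathrm{tr}(\Pi\Pi^\top)]=q_N^{n-1}$, which comes from the compression identity, and on the Lipschitz trace inequality that converts a trace-norm bound into weak closeness. The identification of $\X$ with the free series is a secondary point.
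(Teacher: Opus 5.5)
Your proposal is correct and follows the paper's route: existence via Jensen and Corollary~\ref{cor:matrixperp_inv}, truncation of the series with the uniform geometric bound $\alpha_N^{k-1}\beta_N$ on the matrix side and $\tau(\A)^{k-1}\tau(\B)$ on the free side, convergence of moments for fixed $L$, and a $3\varepsilon$ argument. The differences are minor. The paper obtains $\E[\mathrm{tr}(\Pi_{k-1}\Bb_k\Pi_{k-1}^\top)]=\alpha_N^{k-1}\beta_N$ directly from $\E[\Ab^{(N)}]=\alpha_N\id_N$ and $\E[\Bb^{(N)}]=\beta_N\id_N$ by successive conditioning, rather than via the compression identity. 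It also compares truncated and full sums through the resolvent bound $|m_N(z)-m_{N,L}(z)|\le (\Im z)^{-2}\E[\mathrm{tr}(\Xb^{(N)}-\Xb^{(N)}_L)]$ instead of Lipschitz test functions, which avoids needing a Lidskii-type inequality for the possibly unbounded operator $\X$.
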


\begin{proof}
Observe that 
\begin{align*}
    \E[\log \Ab_{11}^{(N)}]\leq \log \E [ \Ab_{11}^{(N)}]
\end{align*}
and since $\E[\Ab_{11}^{(N)}]\to\tau(\A)<1$, for $N$ big enough $\gamma_N=\E[\log \Ab_{11}^{(N)}]<0$. Similarly convergence of $\Bb^{(N)}$ guarantees that $\E[\log^+(\Bb^2)_{11}]<\infty$. By Corollary \ref{cor:matrixperp_inv} for big enough $N$ matrix perpetuity $\Xb^{(N)}$ exists and is given by the series.

Set
\[
\Pi_n^{(N)} = (\Ab^{(N)}_1)^{1/2}\ldots (\Ab^{(N)}_n)^{1/2},\quad \Pi_0^{(N)} = \id_N. 
\]
and
\[
\Pi_n = (\A_1)^{1/2}\ldots (\A_n)^{1/2},\quad \Pi_0 = \bf 1. 
\]
\smallskip
\noindent
We consider variables given by truncated sums, which define unique solution to perpetuity equations both on the level of matrices and free variables. For $L\ge1$ define
\[
\Xb^{(N)}_L
:=
\sum_{k=1}^{L} \Pi_{k-1}^{(N)}\Bb_k^{(N)} (\Pi_{k-1}^{(N)})^\top,\qquad \Xb^{(N)}
:=
\sum_{k=1}^{\infty} \Pi_{k-1}^{(N)}\Bb_k^{(N)} (\Pi_{k-1}^{(N)})^\top.
\]
Similarly for $L\ge1$ define
\[
\X_L:=\sum_{k=1}^{L}\Pi_{k-1}\B_k \Pi_{k-1}^\ast,\qquad \X:=\sum_{k=1}^{\infty}\Pi_{k-1}\B_k \Pi_{k-1}^\ast.
\]
Since each summand is positive, both $\Xb^{(N)}_L$ and $\X_L$ are positive.

Let
\[
\alpha_N = \frac{1}{N}\E[\mathrm{Tr}(\Ab^{(N)})]  \quad \mbox{and}\quad \beta_N:=\frac1N\,\E[\mathrm{Tr}(\Bb^{(N)})].
\]
By $(iii)$, we have
\[
\alpha_N\to \tau(\A)\quad\mbox{and}\quad 
\beta_N\to \tau(\B),
\]
hence
\[
\beta_\ast:=\sup_{N\ge1} \beta_N<\infty
\]
and since $\tau(\A)<1$, there exists $N_0$ and $\alpha_\ast<1$ such that for $N\geq N_0$,
\[
\alpha_N \leq \alpha_\ast. 
\]

We clearly have
\[
\frac1N\mathrm{Tr}(\Xb^{(N)}-\Xb^{(N)}_L)
=
\sum_{k=L+1}^{\infty}
\frac1N\mathrm{Tr}\!\left(
{\bf \Pi}_{k-1}^{(N)}\Bb_k^{(N)}\big({\bf \Pi}_{k-1}^{(N)}\big)^\top
\right).
\]
Orthogonal invariance implies that
\[
\E[\Ab^{(N)}] = \alpha_N \id_N\quad\mbox{and}\quad \E[\Bb^{(N)}] = \beta_N \id_N.
\]
Hence, by independence, conditioning first on $\Pi_{k-1}^{(N)}$, next on $\Pi_{k-2}^{(N)}$, up to $\Pi_{1}^{(N)}$, for sufficiently large $N$ we have
\[
\E\!\left[
\frac1N\mathrm{Tr}\!\left(
\Pi_{k-1}^{(N)}\Bb_k^{(N)}\big(\Pi_{k-1}^{(N)}\big)^\top
\right)\right] = \alpha_N^{k-1}\beta_N
\le  \alpha_\ast^{k-1} \beta_\ast.
\]
Therefore
\begin{equation}\label{eq:tail_trace_bound_N}
\E\!\left[\frac1N\mathrm{Tr}(\Xb^{(N)}-\Xb^{(N)}_L)\right]
\le
\beta_\ast \sum_{k=L+1}^{\infty}\alpha_\ast^{k-1}
=
\frac{\beta_\ast}{1-\alpha_\ast} \alpha_\ast^L.
\end{equation}
In particular, for sufficiently large $M>0$ we have
\begin{equation}\label{eq:first_moment_truncN}
\sup_{N\ge M}\E\!\left[\frac1N\mathrm{Tr}(\Xb^{(N)}_L)\right]
\le
\beta_\ast \sum_{k=1}^{L}\alpha_\ast^{k-1}
<\infty.
\end{equation}

On the free side, we have
\[
\tau(\Pi_{k-1}\B_k \Pi_{k-1}^\ast )=\tau(\A)^{k-1} \tau(\B),\qquad k\ge0.
\]
Hence
\[
\sum_{k=1}^{\infty}\tau(\Pi_{k-1}\B_k \Pi_{k-1}^\ast )
=
\tau(\B)\sum_{k=0}^{\infty}\tau(\A)^k
<
\infty,
\]
because $\tau(\A)<1$. Therefore the positive series
\[
\sum_{k=1}^\infty \Pi_{k-1}\B_k \Pi_{k-1}^\ast 
\]
converges in $L^1(\tau)$ to a positive element, which we denote by $\X$.
Moreover, as in the matrix case, we have
\begin{equation}\label{eq:tail_trace_bound_free}
\tau(\X-\X_L)
=
\sum_{k=L+1}^{\infty}\tau(\Pi_{k-1}\B_k \Pi_{k-1}^\ast )
=
\frac{\tau(\B)}{1-\tau(\A)}\,\tau(\A)^L.
\end{equation}
By construction $\X$ satisfies \eqref{eq:freeperp2}, and by
Theorem~\ref{thm:uniqe_free} its distribution is uniquely determined.

For every $L\ge1$, let
\[
\mu_N= \mu_{\Xb^{(N)}}\quad\mbox{and}\quad
\mu_{N,L}
:= \mu_{\Xb_L^{(N)}},
\]
be the expected spectral distributions of $\Xb^{(N)}$ and $\Xb_L^{(N)}$ respectively.
Of course $\Xb^{(N)}_L$ is a noncommutative polynomial in $({\Ab}_1^{(N)})^{1/2},\Bb_1^{(N)},\ldots,({\Ab}_L^{(N)})^{1/2},\Bb_L^{(N)}$.

Observe that assumption $(iii)$ implies that  moments of $\mu_{\Xb_L^{(N)}}$ converge to the respective moments of $\mu_{\X_L}$, and since $\mu_{\X_L}$ is compactly supported by definition (as a polynomial in bounded operators) we get
\begin{equation}\label{eq:weak_trunc}
\mu_{N,L}\stackrel{w}{\longrightarrow} \mu_{\X_L} \quad \mbox{as } N\to\infty.
\end{equation}

For $z\in\mathbb C_+$ define
\[
m_N(z):=\int\frac{1}{x-z}\,\mu_N(\dd x)
=
\E\!\left[\frac1N\mathrm{Tr}(\Xb^{(N)}-z\id_N)^{-1}\right],
\]
\[
m_{N,L}(z):=\int\frac{1}{x-z}\,\mu_{N,L}(\dd x)
=
\E\!\left[\frac1N\mathrm{Tr}(\Xb^{(N)}_L-z\id_N)^{-1}\right],
\]
and
\[
m(z):=\tau((\X-z)^{-1}),
\qquad
m_L(z):=\tau((\X_L-z)^{-1}).
\]

By \eqref{eq:weak_trunc},  we have  for fixed $L$,
\begin{equation}\label{eq:st_trunc}
m_{N,L}(z)\longrightarrow m_L(z)
\qquad \mbox{as }N\to\infty.
\end{equation}

For any matrices ${\bf a}, {\bf b}$, we have 
\[
({\bf a}-z\id_N)^{-1}-({\bf b}-z\id_N)^{-1}
=
({\bf a}-z\id_N)^{-1}({\bf b}-{\bf a})({\bf b}-z\id_N)^{-1}.
\]
using (see for example chapter A.3 in \cite{AGZBook}) $|\mathrm{Tr}({\bf a} )|\leq \lVert {\bf a}\rVert_1$ and $\lVert {\bf a b}\rVert_1\leq \lVert {\bf a}\rVert_\infty \rVert {\bf b}\rVert_1$ holding for any two matrices ${\bf a}, {\bf b}$ we have
\[
\left|
\frac1N\mathrm{Tr}({\bf a}-z\id_N)^{-1}-\frac1N\mathrm{Tr}({\bf b}-z\id_N)^{-1}
\right|
\le
\frac{1}{(\Im z)^2}\,\frac1N\|{\bf a}-{\bf b}\|_1.
\]
Applying this with $({\bf a},{\bf b})=(\Xb^{(N)},\Xb^{(N)}_L)$, taking expectation and using that
$\Xb^{(N)}-\Xb^{(N)}_L\ge0$, we obtain
\begin{equation}\label{eq:resolvent_tail_N_lyap}
|m_N(z)-m_{N,L}(z)|
\le
\frac{1}{(\Im z)^2}\,
\E\!\left[\frac1N\mathrm{Tr}(\Xb^{(N)}-\Xb^{(N)}_L)\right]
\le
\frac{\beta_\ast}{(1-\alpha_\ast)(\Im z)^2}\,\alpha_\ast^L
\end{equation}
by \eqref{eq:tail_trace_bound_N}.

Likewise, since $\X-\X_L\ge0$,
\begin{equation}\label{eq:resolvent_tail_free_lyap}
|m(z)-m_L(z)|
\le
\frac{1}{(\Im z)^2}\,\tau(\X-\X_L)
=
\frac{\tau(\B)}{(1-\tau(\A))(\Im z)^2}\,\tau(\A)^L
\end{equation}
by \eqref{eq:tail_trace_bound_free}.

For every fixed $z\in\mathbb C_+$ we have
\[
|m_N(z)-m(z)|
\le
|m_N(z)-m_{N,L}(z)|
+
|m_{N,L}(z)-m_L(z)|
+
|m_L(z)-m(z)|.
\]
Hence using \eqref{eq:st_trunc}, \eqref{eq:resolvent_tail_N_lyap}, and
\eqref{eq:resolvent_tail_free_lyap}, we get 
\[
\limsup_{N\to\infty}|m_N(z)-m(z)|
\le
\frac{\beta_\ast}{(1-\alpha_\ast)(\Im z)^2}\,\alpha_\ast^L
+
\frac{\tau(\B)}{(1-\tau(\A))(\Im z)^2}\,\tau(\A)^L.
\]
Letting $L\to\infty$ yields
\[
m_N(z)\longrightarrow m(z)
\qquad\text{for every }z\in\mathbb C_+.
\]
The weak convergence of the expected empirical spectral
distribution of $\Xb^{(N)}$ follows.
\end{proof}

In \cite{FreePerp}, we studied tail asymptotics of free perpetuities. In particular, we showed that in the critical case \(\tau(\A)=1\), with \(\tau(\A^2)<\infty\), and under additional technical assumptions, the perpetuity \(\X\) exists and has universal tail asymptotics:
\begin{align}\label{eq:tail1}
    \lim_{t\to+\infty} t^{1/2}\mu_{\X}\big((t,+\infty)\big)
    =
    \frac{2\sqrt{2}}{\pi}
    \sqrt{\frac{\tau(\B)}{\mathrm{Var}(\A)}}.
\end{align}
Theorem~\ref{thm:matmodel}, however, excludes this critical regime by assumption~(iv). It would be interesting to extend the theorem so as to cover this case as well. We emphasize, however, that there is no a priori reason to expect the tail exponents of finite-dimensional matrix perpetuities to converge to the tail exponent of the limiting expected empirical spectral distribution.

\appendix

\section{Weak convergence of the spectral distribution of matrix Beta prime distribution}\label{app:A}

Let $\mathrm{Herm}^1(N)=\mathrm{Sym}(N, \mathbb{R})$ and $\mathrm{Herm}^2(N)=\mathrm{Herm}(N, \mathbb{C})$ denote the space of $N\times N$ symmetric real and complex Hermitian matrices, respectively.

Let $d_N=N^{-1}\dim\mathrm{Herm}^d(N)=1+d (N-1)/2$.
The matrix beta-prime distribution with parameters $\alpha,\beta>d_N-1$ on the cone of positive definite matrices $\mathrm{Herm}^d_+(N) = \{ M\in \mathrm{Herm}^d(N)\colon M>0\}$, $d=1,2$, is defined by its density
\[
\mathcal{B}'_{\alpha,\beta}(\mathrm{Herm}^d_+(N) )(\dd\xb) \propto \frac{\det(\xb)^{\alpha-d_N}}{\det(\id_N+\mathbf{x})^{\alpha+\beta}}I_{\mathrm{Herm}_+^d(N)}(\xb)\dd\xb
\]
where $\dd\mathbf{x}$ is the Euclidean measure on $\mathrm{Herm}^d(N)$. 

Consider $(\alpha,\beta)=(\alpha_N,\beta_N)$ satisfying 
\begin{align}\label{eq:alphabetaN2}
\lim_{N\to+\infty}\frac{\alpha_N}{N} =  \frac{a\, d}{2}\quad\mbox{and}\quad  \lim_{N\to+\infty}\frac{\beta_N}{N} =  \frac{b\, d}{2}.
\end{align}
Since the distribution of $\mathcal{B}'$ is invariant under the action of orthogonal ($d=1$) or unitary ($d=2$) groups, the eigenvalue distribution of $\mathcal{B}'_{\alpha_N, \beta_N}(\mathrm{Herm}_+^d(N))$, $a\geq1$ and $b>1$,  has density proportional to
\begin{align}\label{eq:LambdaDens}
 \exp\left( - \frac{d\,N}{2}
\sum_{k=1}^N V_N(\lambda_k)\right) \prod_{1\leq i<j\leq N} (\lambda_i-\lambda_j)^{d} I_{\lambda_1>\ldots>\lambda_N},
\end{align}
where 
\[
V_N(x)=\frac{\alpha_N+\beta_N}{d N/2}\log(1+x) -  \left(\frac{\alpha_N}{d N/2}- \frac{d_N}{N d/2}\right)\log(x)
\]
for $x>0$ and  $V_N(x)=+\infty$ for $x\leq 0$. As $N\to+\infty$ we have $V_N(x)\to V(x)$, $x>0$, where
\begin{align}\label{eq:defV}
V(x)=(a+b)\log(1+x) -  (a-1)\log(x).
\end{align}
\begin{lemma}\label{lem:minimizer}
Let $a\geq1$ and $b>1$ and assume that $(\lambda_1,\ldots,\lambda_N)$ has density given by \eqref{eq:LambdaDens}. Then, the (random) probability measure 
\[
\hat\mu_N = \frac{1}{N}\sum_{k=1}^N \delta_{\lambda_k}
\]
converges weakly a.s. to the unique non-random minimizer of 
$E_V\colon \mathcal{M}_c\to \R\cup\{+\infty\}$, where
\begin{align}\label{eq:EV}
E_V(\mu)= \int_\R \int_\R \log\left(\tfrac{1}{|x-y|}\right)\mu(\dd x)\mu(\dd y)+\int_\R V(x)\mu(\dd x),
\end{align}
where $V$ is defined in \eqref{eq:defV}
and $\mathcal{M}_c$ is the set of probability measures on $\R$ with compact support. 
\end{lemma}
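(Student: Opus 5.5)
This follows the standard large-deviation route for $\beta$-ensembles with an $N$-dependent potential, in the spirit of Ben Arous--Guionnet and the general framework in \cite{AGZBook} (Theorem 2.6.1 and its proof). First I would establish the properties of $V$ from \eqref{eq:defV} needed for the energy functional \eqref{eq:EV}. $V$ is continuous on $(0,\infty)$ and equals $+\infty$ on $(-\infty,0]$; for $a>1$ it blows up at $0^+$, and for $a=1$ it stays bounded near $0$. At infinity,
\[
V(x)=(a+b)\log(1+x)-(a-1)\log x\sim (b+1)\log x.
\]
Since $b>1$, this gives $\liminf_{|x|\to\infty} V(x)/(2\log|x|)>1$. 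This is exactly the growth condition under which $E_V$ is a good rate function: it has compact level sets in the weak topology, takes the value $+\infty$ on measures without compact support, and is strictly convex on its domain. Strict convexity follows because $-\log|x-y|$ is conditionally positive definite on signed measures of total mass zero. Together these give existence and uniqueness of a compactly supported minimizer $\mu_V$, supported in $[0,\infty)$.

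Next I would handle the $N$-dependence of the potential. From \eqref{eq:alphabetaN2}, $V_N=c_N\log(1+x)-c_N'\log x$ with $c_N\to a+b$ and $c_N'\to a-1$. So $V_N\to V$ uniformly on compact subsets of $(0,\infty)$. Moreover, for large $N$ there is a uniform confining bound $V_N(x)\ge (1+\delta)\,2\log x - C$ for $x\ge1$, with some $\delta>0$. Near $0$ we have $V_N(x)\ge -C$ when $c_N'\ge0$. When $a=1$, $c_N'$ may be slightly negative, which gives a mild singularity $-c_N'\log x$ with $c_N'\to0$; this is integrable and controlled by $\epsilon|\log x|$. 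I would adapt the proof of the LDP upper and lower bounds at speed $N^2$ to this situation. For the upper bound, I would bound the integrand by $\exp\bigl(-\tfrac{dN^2}{2}\iint F_N\,\dd\hat\mu_N\,\dd\hat\mu_N\bigr)$ with $F_N(x,y)=\log|x-y|^{-1}+\tfrac12(V_N(x)+V_N(y))$, truncate at level $M$, and use monotone convergence in $M$ together with the locally uniform convergence $V_N\to V$. For the lower bound, I would localize the eigenvalues near the quantiles of a compactly supported measure in $(0,\infty)$, where $V_N\to V$ uniformly, and also control the normalizing constant $Z_N$ by comparing upper and lower bounds. Exponential tightness comes from the confinement estimate, which forces the largest eigenvalue to exceed $R$ only with probability $e^{-cN^2}$ for $R$ large.

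Finally, the large deviation principle with a rate function $E_V-E_V(\mu_V)$ that has the unique zero $\mu_V$ gives
\[
\P(\hat\mu_N\notin U)\le e^{-cN^2}
\]
for every weak neighbourhood $U$ of $\mu_V$. Borel--Cantelli then yields almost sure weak convergence. I would also remark that the restriction to the ordered chamber $\lambda_1>\dots>\lambda_N$ only changes the normalization by $N!$, which is negligible at speed $N^2$.

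The main obstacle is the behaviour near the hard edge $x=0$ in the borderline case $a=1$. There $V$ is finite at $0$, while $V_N$ may carry a small negative logarithmic term. This threatens the lower semicontinuity and uniform lower bound on $F_N$ used in the upper bound. I would try to absorb it by writing $V_N\ge V-\epsilon_N(1+|\log x|)$ with $\epsilon_N\to0$, and then showing that $\int|\log x|\,\dd\hat\mu_N$ is exponentially tight at speed $N^2$. For this I would use the repulsion term, which prevents a macroscopic fraction of eigenvalues from accumulating at $0$.
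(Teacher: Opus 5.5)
Your strategy is the paper's in substance: a speed-$N^2$ large deviation principle for the Coulomb gas with $N$-dependent potential, uniqueness of the zero of the rate function, then Borel--Cantelli. The difference is that the paper does not re-derive the LDP. It checks that $V_N$ is continuous on $(0,\infty)$, that $V_N\to V$ uniformly on compacts of $(0,\infty)$, and that $V_N(x)\ge(1+\eps)\log(1+x^2)$ for $x\ge T$ and large $N$. It then invokes the half-line version of \cite[Theorem 2.1]{Feral}, which is precisely an LDP for $N$-dependent potentials.

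In re-deriving it yourself, two of your steps fail as stated, for the same reason: they treat an event involving a single eigenvalue as if it cost $e^{-cN^2}$, whereas moving one eigenvalue is a speed-$N$ event.

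\textbf{Exponential tightness.} For fixed large $R$, $\P(\max_k\lambda_k>R)$ is only of order $e^{-cN\log R}$, since largest-eigenvalue deviations live at speed $N$; it is not $e^{-cN^2}$. The standard argument instead uses that a \emph{macroscopic} fraction escaping is costly. For example, $\P\bigl(\int\log(1+x)\,\hat\mu_N(\dd x)>M\bigr)\le e^{-c(M)N^2}$ with $c(M)\to\infty$. This follows from $\log|x-y|\le\log(1+|x|)+\log(1+|y|)$, your confinement bound, and a crude lower bound on the normalizing constant.

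\textbf{The hard edge for $a=1$.} You are right that this is the delicate point. The condition $\alpha_N>d_N-1$ only gives $c_N'>-2/(dN)$, so $V_N$ can tend to $-\infty$ at $0^+$, and then $F_N$ is not bounded below. But your remedy, speed-$N^2$ exponential tightness of $\int|\log x|\,\hat\mu_N(\dd x)$, is false. Pushing only the smallest eigenvalue below $e^{-NM}$ already makes this integral exceed $M$. Since the one-body weight at $0$ is $\lambda^{\alpha_N-d_N}$ with $\alpha_N-d_N>-1$, this event has probability at least of order $e^{-NM-O(N)}$, which is not $e^{-LN^2}$.

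The fix is to move the singular factor out of the potential and into the reference measure. When $\alpha_N<d_N$, write $\lambda^{\alpha_N-d_N}(1+\lambda)^{-\alpha_N-\beta_N}=g_N(\lambda)(1+\lambda)^{-(\alpha_N+\beta_N-2)}$ with $g_N(\lambda)=\lambda^{\alpha_N-d_N}(1+\lambda)^{-2}$.
\begin{itemize}
\item Then $\prod_k g_N(\lambda_k)\,\dd\lambda_k$ is a finite product measure, integrable precisely because $\alpha_N>d_N-1$.
\item Its total mass is $e^{o(N^2)}$ as long as $\log\frac{1}{1+\alpha_N-d_N}=o(N)$.
\item The remaining potential $\frac{\alpha_N+\beta_N-2}{dN/2}\log(1+x)$ is nonnegative and converges uniformly on compacts of $[0,\infty)$ to $V=(1+b)\log(1+x)$.
\item It still confines, since $1+b>2$.
\end{itemize}
The usual upper-bound argument then applies unchanged.

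A minor inaccuracy: $E_V$ is not $+\infty$ on every measure without compact support. Compact support of the minimizer comes instead from the growth of $V$, through the Euler--Lagrange inequality.
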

\begin{remark}
    Note that the pushforward measure of $\mathcal{B}'_{\alpha,\beta}(\mathrm{Herm}^d_+(N) )$ by the mapping $\xb\mapsto \xb^{-1}$ is $\mathcal{B}'_{\beta,\alpha}(\mathrm{Herm}^d_+(N) )$. Therefore, Lemma \ref{lem:minimizer} also holds when $a>1$ and $b\geq 1$. 
\end{remark}
\begin{proof}[Proof of Lemma \ref{lem:minimizer}]
Since $(V_N)_{N\geq 1}$ is not a constant sequence, we will use the results of \cite{Feral}, more precisely, we check the assumptions of \cite[Theorem 2.1]{Feral}.

    We observe that $V_N\colon (0,+\infty)\to\R$ is continuous for all $N\in\mathbb{N}$ and that convergence $V_N\to V$ is uniform on compact subsets of $(0,+\infty)$. By \eqref{eq:alphabetaN2}, $\beta_N/(d N)>1/2$ for large $N$. Thus, for large $N$, we have 
    \[
    \lim_{x\to+\infty} \frac{V_N(x)}{\log(1+x^2)} = \frac{\beta_N+d_N}{d N} > \frac{1}{2}+\frac{d_N}{d N} = 1+\frac{1-d/2}{d N} \geq 1.
    \]
 Thus, there exists $\eps>0$ such that for sufficiently large $N$ and $T$ one has
\[
V_N(x)\geq(1+\eps)\log(1+x^2),\qquad x\geq T.
\]
If $(V_N)_{N\geq 1}$ satisfied these conditions on $\R$, then the result would follow directly from \cite[Theorem 2.1]{Feral}. The second paragraph after the formulation of \cite[Theorem 2.1]{Feral} states that this result remains true if one restricts the potential to the positive half-line as in our case.
\end{proof}
\begin{remark}
If $(\lambda_k)_{k=1}^N$ are the eigenvalues of a matrix $\Yb^{(N)}$, then expected measure of $\hat\mu_N$ defined in Lemma \ref{lem:minimizer}, that is, $\mu_{\Yb^{(N)}}(\cdot)=\E[\mu_N(\cdot)]$ coincides with the expected empirical spectral distribution of $\Yb^{(N)}$.  Through Lebesgue's dominated convergence theorem, almost sure weak convergence of $\mu_N$ implies weak convergence of $\mu_{\Yb^{(N)}}$ to the same limit. 
\end{remark}

\begin{lemma} \label{lem:minimizer2}
If $a\geq1$ and $b>1$, then $E_V$ defined in \eqref{eq:EV} is minimized by $\mu_{a,b}^\ast(\dd x)=f_{a,b}(x)\dd x$, where $f_{a,b}$ is given in \eqref{eq:fab}.
\end{lemma}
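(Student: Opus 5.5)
We verify that $\mu^\ast_{a,b}$ satisfies the Euler--Lagrange (Frostman) conditions for $E_V$, and then invoke uniqueness. The potential $V(x)=(a+b)\log(1+x)-(a-1)\log x$ is continuous on $(0,\infty)$ and grows like $(b+1)\log x$ with $b>1$, so it is strongly admissible. Standard logarithmic potential theory (Saff--Totik) then gives a unique minimizer $\mu$. It is characterized by the existence of a constant $\ell$ with
\[
2\int \log|x-y|^{-1}\,\mu(\dd y)+V(x)=\ell \quad\text{on }\operatorname{supp}\mu,\qquad 2\int \log|x-y|^{-1}\,\mu(\dd y)+V(x)\ge \ell \quad\text{on }(0,\infty)\setminus\operatorname{supp}\mu .
\]
Hence it suffices to check these two conditions for $\mu^\ast_{a,b}$ with support $[\gamma_-,\gamma_+]$. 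The first step is a sanity check that $f_{a,b}\ge0$ integrates to $1$. This can be done by residue calculus, or one may note that $f_{a,b}$ is the free Beta prime law already identified in \cite{FreePerp}, so its normalization is known.

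Second, we compute the Stieltjes transform $G(z)=\int f_{a,b}(x)(z-x)^{-1}\dd x$ in closed form. Writing $\sqrt{(z-\gamma_+)(z-\gamma_-)}$ with the branch $\sim z$ at infinity, a contour/residue computation gives
\[
G(z)=\frac{(b-1)}{2}\left(\frac{c_0}{z}-\frac{c_1}{1+z}-\frac{\sqrt{(z-\gamma_+)(z-\gamma_-)}}{z(1+z)}\right)
\]
for suitable constants $c_0,c_1$. These are fixed so that the singularities at $z=0$ and $z=-1$ cancel, which requires $c_0=\sqrt{\gamma_+\gamma_-}$ and $c_1=\sqrt{(1+\gamma_+)(1+\gamma_-)}$. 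From the definition of $\gamma_\pm$ one checks that
\[
\sqrt{\gamma_+\gamma_-}=\frac{ab-(a+b-1)}{(b-1)^2}\quad\text{and}\quad \sqrt{(1+\gamma_+)(1+\gamma_-)}=\frac{(a+b-1)+ab+\ldots}{(b-1)^2},
\]
where the second expression is to be computed explicitly.

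Third, we verify the equality condition by differentiating it. On $(\gamma_-,\gamma_+)$ the condition becomes $2\,\mathrm{p.v.}\!\int \frac{\mu(\dd y)}{x-y}=V'(x)=\frac{a+b}{1+x}-\frac{a-1}{x}$, which is equivalent to $2\operatorname{Re}G(x\pm i0)=V'(x)$. From the formula above, $\operatorname{Re}G(x+i0)$ equals $\frac{b-1}{2}\bigl(\frac{c_0}{x}-\frac{c_1}{1+x}\bigr)$, so we need $(b-1)c_0=-(a-1)$ in absolute value with the correct sign, and $(b-1)c_1=a+b$. These two identities are precisely what the formula for $\gamma_\pm$ encodes. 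Indeed, $\gamma_\pm$ are the roots of the quadratic $(b-1)^2x^2-2(ab+a+b-1)x+(a-1)^2=0$, obtained by requiring $G(z)\sim 1/z$ at infinity together with $V'$ matching at $0$ and $-1$. We will check via Vieta's formulas that $\gamma_+\gamma_-=(a-1)^2/(b-1)^2$ and $(1+\gamma_+)(1+\gamma_-)=(a+b)^2/(b-1)^2$. The normalization $G(z)\sim 1/z$ then follows as a consistency check: the coefficient of $1/z$ is $\frac{b-1}{2}(c_0-c_1+\text{contribution of the root})=1$. Equality of the derivatives gives constancy of the potential on the support.

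Finally, the inequality off the support must be checked; this is the main obstacle. For $x>\gamma_+$, the derivative of $x\mapsto 2\int\log|x-y|^{-1}\mu(\dd y)+V(x)$ equals
\[
V'(x)-2G(x)=(b-1)\frac{\sqrt{(x-\gamma_+)(x-\gamma_-)}}{x(1+x)}>0,
\]
so the function increases away from $\gamma_+$. For $0<x<\gamma_-$ the same expression involves the branch with the opposite sign, so the function decreases as $x$ moves toward $\gamma_-$ and is therefore $\ge \ell$ there. The delicate points are tracking the branch signs and the case $a=1$. When $a=1$ we have $\gamma_-=0$, the term $(a-1)\log x$ vanishes, and the left gap disappears, so this case must be handled separately. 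Uniqueness of the minimizer then identifies $\mu^\ast_{a,b}$ as the minimizer of $E_V$.
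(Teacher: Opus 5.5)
Your plan is essentially the paper's proof: the paper also verifies the Saff--Totik Euler--Lagrange conditions for $\mu^\ast_{a,b}$ from the closed-form Cauchy transform, with $2\,\mathrm{Re}\,G(x+i0)=V'(x)$ on $[\gamma_-,\gamma_+]$ and the sign of $V'(x)-2G(x)=(b-1)\sqrt{(x-\gamma_-)(x-\gamma_+)}/(x(1+x))$ off the support giving the inequalities; the only difference is that it imports $G$ and the identity on the support from Yoshida's free Beta prime law instead of recomputing them. One correction to your constants: with the branch $\sim z$ at infinity the square root is negative on $(-\infty,\gamma_-)$, so pole cancellation forces $c_0=-\sqrt{\gamma_+\gamma_-}=(1-a)/(b-1)$ and $c_1=-\sqrt{(1+\gamma_+)(1+\gamma_-)}=-(a+b)/(b-1)$ (not $(b-1)c_1=a+b$), and these are exactly the values that give $2\,\mathrm{Re}\,G=V'$ and $zG(z)\to 1$.
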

\begin{proof}
The measure $\mu_{a,b}^\ast$ coincides with the free Beta prime distribution defined in \cite{Yoshida}. By \cite[Proposition 3.2]{Yoshida}, its  Cauchy transform for $z\in\mathbb{C}^+ = \{z\in \mathbb{C}\colon \mathrm{Im}\,z>0\}$ equals
\begin{align*}
G(z) &= 
\frac{(b+1)z+(1-a)-\sqrt{(b-1)^2 z^2-2(ab+a+b-1)z+(a-1)^2}}{2 z(1+z)},\\
&=\frac{(b+1)z+(1-a)-(b-1)\sqrt{(z-\gamma_-)(z-\gamma_+)}}{2z(1+z)},
\end{align*}
where the branch of the square root is chosen so that $\lim_{z\to+\infty} G(z)=0$. 
It is a standard result that from the Cauchy transform one can obtain the Hilbert transform of $f_{a,b}$, see e.g. \cite[Chapter 3.1]{HP00}. We get (we drop the usual $\pi^{-1}$ factor in the definition of the Hilbert transform $\mathcal{H}$)
\begin{align*}
\mathcal{H}(f_{a,b})(y)&=\mathrm{P.V.}\int_\R \frac{f_{a,b}(x)}{y-x}\dd x = \lim_{\eps\to 0+} \mathrm{Re} \,G(y+i \eps) \\
&= \begin{cases}
\frac{(b+1)y+(1-a)+(b-1)\sqrt{(\gamma_--y)(\gamma_+-y)}}{2y(1+y)}, & y\in(0,\gamma_-),\\
    \frac{1}{2}V'(y),& y\in [\gamma_-,\gamma_+], \\
\frac{(b+1)y+(1-a)-(b-1)\sqrt{(y-\gamma_-)(y-\gamma_+)}}{2y(1+y)}, & y>\gamma_+,
\end{cases}
\end{align*}
where the case $y\in[\gamma_-,\gamma_+]$ follows from \cite[Section 4.3 (i)]{Yoshida} and the case $y\in(0,+\infty)\setminus [\gamma_-,\gamma_+]$ follows from analytic continuation of $G$. Moreover, we have 
\[
\mathcal{H}(f_{a,b})(y)<\frac{(b+1)y+(1-a)}{2y(1+y)}=\frac12V'(y),\qquad y>\gamma_+ 
\]
and similarly $\mathcal{H}(f_{a,b})(y)>\frac12V'(y)$ for $y\in(0,\gamma_-)$. We note that $\gamma_->0$ if and only if $a>1$. 

Let $U_{\mu^\ast}(y) =  -\int_{\R} \log(|x-y|)\mu^\ast(\dd x)$.  By \cite[Theorem 3.1]{ST97}, 
the unique compactly supported minimizer $\mu^\ast$ of $E_{V}$ is characterized by the following two conditions: there exists $C\in\R$ such that
    \begin{align}\label{eq:UV}\begin{cases}
       U_{\mu^\ast}(x) = -\frac12 V(x)+C,& x\in \mathrm{supp}(\mu^\ast), \\
         U_{\mu^\ast}(x)\geq -\frac12V(x)+C,& x\notin \mathrm{supp}(\mu^\ast).
\end{cases}    \end{align}
We will show that these conditions are satisfied for $\mu^\ast=\mu^{\ast}_{a,b}$. Let $U=U_{\mu^\ast_{a,b}}$.
We have for all $x<y$, 
\begin{align}\label{eq:UU}
\begin{split}
U(y)-U(x) &= \int_{\R} \log\left| \frac{x-t}{y-t}\right|f_{a,b}(t)\dd t = \int_{\R} \mathcal{H}( I_{[x,y]})(t) f_{a,b}(t)\dd t \\&= -\int_{\R} I_{[x,y]}(t) \mathcal{H}(f_{a,b})(t)\dd t
= -\int_x^y \mathcal{H}(f_{a,b})(t)\dd t,\end{split}
\end{align}
where we have used the fact that Hilbert transform is skew-adjoint operator relative to the duality pairing between $L^p(\R)$ and $L^q(\R)$, $1/p+1/q=1$, $p>1$, and an explicit formula for the Hilbert transform of the indicator function. It can be easily verified that $f_{a,b}\in L^p(\R)$ with $a>1$ for any $p>1$ and $f_{1,b}\in L^p(\R)$ for $p\in(1,2)$. 

If $x<0$, then $V(x)=+\infty$ and therefore $U(x)\geq-\frac{1}{2}V(x)+C$ for any $C\in\R$. The same argument applies to the case $a>1$ and $x=0$. 
Moreover, by \eqref{eq:UU} and previously established inequalities for $\mathcal{H}(f_{a,b})(y)$, we obtain
\begin{align*}
    U(x) &\geq  
     -\frac12 V(x)+ \frac12 V(\gamma_-)+U(\gamma_-),\qquad x\in(0,\gamma_-),\\
    U(x) &= 
    -\frac12 V(x)+ \frac12 V(\gamma_-)+U(\gamma_-),\qquad x\in[\gamma_-,\gamma_+]
 \intertext{and}
 U(x) &\geq 
 -\frac12 V(x)+ \frac12 V(\gamma_+)+U(\gamma_+),\quad\qquad x>\gamma_+.
\end{align*}
Since $U(\gamma_+)-U(\gamma_-) = -\frac12 ( V(\gamma_+)-V(\gamma_-))$, we obtain \eqref{eq:UV} with 
\[
C=\frac12 V(\gamma_-)+U(\gamma_-) = \frac12 V(\gamma_+)+U(\gamma_+).
\]
\end{proof}

\section{Submatrices of Beta prime distribution}\label{app:B}

\begin{lemma}\label{lem:prime_subm}
Let $\Ab\sim \mathcal{B}'_{\alpha,\beta}(N)$ with $\alpha,\beta>d_N-1=(N-1)/2$. Then, for every $k=1,\dots,N$,
\[
\Ab^{[k]}\sim \mathcal{B}'_{\alpha,\beta-(N-k)/2}(k).
\]
\end{lemma}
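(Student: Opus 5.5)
The plan is to reduce the statement to the case of the bottom-right corner via the inversion symmetry $\mathcal{B}'_{\alpha,\beta}\leftrightarrow\mathcal{B}'_{\beta,\alpha}$ under $\xb\mapsto\xb^{-1}$. We then handle that corner using Schur complements and a Wishart representation. Concretely, let $\Wb_1\sim\cW_{\alpha,\id}(N)$ and $\Wb_2\sim\cW_{\beta,\id}(N)$ be independent; all Wishart laws below have scale $\id$. Then $\Ab\stackrel{d}{=}\Wb_2^{-1/2}\Wb_1\Wb_2^{-1/2}$. This is the standard beta-prime construction, and it can be checked from the density \eqref{eq:B'N} by a change of variables.

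By Corollary \ref{prop:39} (compression identity), applied with the independent orthogonally invariant matrix $\Wb_1$ and the outer factor $\Wb_2^{-1/2}$, we get
\[
\Ab^{[k]}\stackrel{d}{=}\big((\Wb_2^{-1})^{[k]}\big)^{1/2}\,\Wb_1^{[k]}\,\big((\Wb_2^{-1})^{[k]}\big)^{1/2}.
\]
This reduces the problem to identifying two marginal laws.

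For the first factor we use the classical Wishart marginal: $\Wb_1^{[k]}\sim\cW_{\alpha,\id}(k)$, since a principal block of a Wishart matrix is Wishart with the same shape parameter and the sub-scale. For the second factor we use the block-inverse formula, which gives $\big((\Wb_2^{-1})^{[k]}\big)^{-1}=\Wb_{2,11}-\Wb_{2,12}\Wb_{2,22}^{-1}\Wb_{2,21}$, the Schur complement. By the standard Wishart Schur-complement theorem (e.g.\ Muirhead), this complement is $\cW_{\beta-(N-k)/2,\id}(k)$. In the paper's parametrization, where shape $p$ corresponds to $2p$ degrees of freedom, removing $N-k$ coordinates lowers the degrees of freedom by $N-k$, that is, lowers $p$ by $(N-k)/2$. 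The assumption $\beta>(N-1)/2$ guarantees that $\beta-(N-k)/2>(k-1)/2$, so the resulting law is a genuine Wishart law.

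Combining the two identifications, we obtain $\Ab^{[k]}\stackrel{d}{=}\Vb^{-1/2}\Wb_1^{[k]}\Vb^{-1/2}$, where $\Vb\sim\cW_{\beta-(N-k)/2,\id}(k)$ is independent of $\Wb_1^{[k]}\sim\cW_{\alpha,\id}(k)$. Independence holds because the two factors are functions of $\Wb_2$ and $\Wb_1$ respectively. By the same beta-prime construction in dimension $k$, this law is $\mathcal{B}'_{\alpha,\beta-(N-k)/2}(k)$.

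The main obstacle is justifying the beta-prime construction with exactly the normalization of \eqref{eq:B'N}, including the exponents $\alpha-d_N$ and $\alpha+\beta$. I would verify it by computing the density of $\Yb=\Wb_2^{-1/2}\Wb_1\Wb_2^{-1/2}$ conditionally on $\Wb_2=\wb$. Using the Jacobian $\det(\wb)^{d_N}$ of the map $\xb\mapsto\wb^{1/2}\xb\wb^{1/2}$, this conditional density is proportional to $\det(\yb)^{\alpha-d_N}\det(\wb)^{\alpha}e^{-\mathrm{Tr}(\wb^{1/2}\yb\wb^{1/2})}$. Integrating against the $\cW_{\beta,\id}$ density of $\wb$ then gives a Gamma-type integral proportional to $\det(\id+\yb)^{-(\alpha+\beta)}$, as required.

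A quicker alternative, which avoids Wishart matrices altogether, is to integrate the density \eqref{eq:B'N} directly over the complementary blocks. For the top-left block $\xb_{11}$, factor $\det\xb=\det\xb_{11}\det(\xb/\xb_{11})$ and perform the analogous factorization of $\det(\id+\xb)$. The integral then reduces to a $\mathcal{B}'$-type integral in the remaining variables. This route is more computational, so I would keep the Wishart argument as the primary one.
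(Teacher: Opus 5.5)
Your argument is correct and is essentially the paper's proof. Both use the same chain of steps: a Wishart representation of $\mathcal{B}'_{\alpha,\beta}(N)$, the compression identity of Corollary~\ref{prop:39}, the Wishart law of a principal block, the block-inverse formula together with the law $\cW_{\beta-(N-k)/2,\id_k}(k)$ of the Schur complement, and finally the same representation in dimension $k$. The differences are cosmetic: you put the orthogonally invariant factor $\mathbf{W}_1$ inside and $\mathbf{W}_2^{-1/2}$ outside, whereas the paper uses $\Xb^{1/2}\Yb^{-1}\Xb^{1/2}$ with $\Yb^{-1}$ inside. You also verify the Wishart representation by a density computation, which the paper takes as standard. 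The reduction via $\xb\mapsto\xb^{-1}$ announced in your first sentence is never used and can be dropped.
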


\begin{proof}
Let $\Xb\sim \cW_{\alpha,I_N}(N)$ and $\Yb\sim \mathcal{W}_{\beta,\id_N}(N)$ be independent (recall definition of Wishart distribution in \eqref{eq:Wishart}). By the standard Wishart representation of the matrix beta-prime law,
\[
\Ab \stackrel{d}= \Xb^{1/2}\Yb^{-1}\Xb^{1/2}.
\]
Since $\Yb^{-1}$ is orthogonally invariant, Corollary~\ref{prop:39} gives
\begin{align}\label{eq:A[k]}
\Ab^{[k]}
\stackrel{d}{=}
\bigl(\Xb^{[k]}\bigr)^{1/2}\,(\Yb^{-1})^{[k]}\,\bigl(\Xb^{[k]}\bigr)^{1/2}.
\end{align}
Also, it is a standard fact that $\Xb^{[k]}\sim \cW_{\alpha,\id_k}(k)$.

It remains to identify the law of $(\Yb^{-1})^{[k]}$. Write
\[
\Yb=
\begin{pmatrix}
\Yb_{11} & \Yb_{12}\\
\Yb_{21} & \Yb_{22}
\end{pmatrix},
\qquad \Yb_{11}=\Yb^{[k]}.
\]
By the block inversion formula,
\[
(\Yb^{-1})^{[k]}=(\Yb_{1\cdot 2})^{-1},
\qquad
\Yb_{1\cdot 2}:=\Yb_{11}-\Yb_{12}\Yb_{22}^{-1}\Yb_{21}.
\]
Now apply \cite[Proposition~3.2]{MassamWesolowski06}
to $\Yb$, with block sizes $r=k$ and $s=N-k$, and $c=\id_N$. It yields (recall the definition of matrix GIG distribution in \eqref{eq:GIG})
\[
(\Yb_{22},\Yb_{1\cdot 2})\mid \Yb_{12}
\sim
 \mathcal{G}_{\cdots}(N-k) \times \cW_{\beta-\frac{N-k}{2},\id_k}(k),
\]
where the parameters of the matrix GIG distributions are irrelevant for our purposes, and therefore we omit their specification here.
Since the second factor does not depend on $\Yb_{12}$, we obtain
\[
((\Yb^{-1})^{[k]})^{-1}= \Yb_{1\cdot 2}\sim \cW_{\beta-\frac{N-k}{2},\id_k}(k).
\]
Therefore, by the same Wishart representation of the beta-prime law on $\mathrm{Sym}_{>0}(k)$, \eqref{eq:A[k]}  implies that
\[
\Ab^{[k]}\sim \mathcal{B}'_{\alpha,\beta-(N-k)/2}(k).
\]
\end{proof}

\section{Measurable polar decomposition}\label{sec:C}

Let $(X,\mathcal A)$ be a measurable space and let $Y$ be a topological space. A set-valued map
\[
\Psi \colon X \to 2^Y
\]
is called weakly measurable if for every open set $U\subset Y$,
\[
\{x\in X\colon \Psi(x)\cap U\neq\emptyset\}\in\mathcal A.
\]

\begin{remark}
If $Y$ is a metric space and $\Psi(x)$ is compact for every $x\in X$, then weak measurability is equivalent to the following condition: for every closed set $F\subset Y$,
\[
\{x\in X\colon \Psi(x)\cap F\neq\emptyset\}\in\mathcal A.
\]
Indeed, if $F$ is closed and $F^\varepsilon:=\{y\in Y\colon d(y,F)<\varepsilon\}$, then for compact $\Psi(x)$,
\[
\Psi(x)\cap F\neq\emptyset
\quad\Longleftrightarrow\quad
\Psi(x)\cap F^\varepsilon\neq\emptyset
\ \text{ for every }\ \varepsilon>0.
\]
Hence
\[
\{x\colon \Psi(x)\cap F\neq\emptyset\}
=
\bigcap_{n=1}^\infty
\{x\colon \Psi(x)\cap F^{1/n}\neq\emptyset\},
\]
so measurability for closed sets follows from the definition. Conversely, if $U\subset Y$ is open, then
\[
U=\bigcup_{n=1}^\infty F_n,
\qquad
F_n:=\{y\in Y\colon d(y,Y\setminus U)\ge 1/n\},
\]
with each $F_n$ closed, and therefore
\[
\{x\colon \Psi(x)\cap U\neq\emptyset\}
=
\bigcup_{n=1}^\infty
\{x\colon \Psi(x)\cap F_n\neq\emptyset\}.
\]
\end{remark}

\begin{thm}[Kuratowski--Ryll-Nardzewski measurable selection theorem]
Let $X$ be a standard Borel space, let $Y$ be a Polish space, and let
\[
\Psi\colon X\to 2^Y
\]
be a set-valued map with nonempty closed values. If $\Psi$ is weakly measurable, then there exists a Borel measurable map
\[
f\colon X\to Y
\]
such that
\[
f(x)\in \Psi(x)\qquad\text{for all }x\in X.
\]
\end{thm}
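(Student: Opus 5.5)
We would prove this theorem by the classical successive approximation argument: build Borel maps $f_k\colon X\to Y$ taking values in a fixed countable dense subset of $Y$, which converge uniformly and whose distance to $\Psi(x)$ tends to zero. Fix a complete metric $d$ compatible with the topology of $Y$ and a countable dense set $\{y_n\}_{n\ge1}\subset Y$. Write $B(y,r)$ for the open ball. The only input from weak measurability is that for every open $U$ the set $\{x\colon \Psi(x)\cap U\neq\emptyset\}$ is Borel. We apply this to balls $U=B(y_n,r)$ and to finite intersections of balls.

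\emph{First step.} Define $f_0(x):=y_n$, where $n$ is the least index with $\Psi(x)\cap B(y_n,1)\neq\emptyset$. Such $n$ exists because $\Psi(x)\neq\emptyset$ and $\{y_n\}$ is dense. The level set $\{f_0=y_n\}$ is a Boolean combination of the Borel sets $\{x\colon\Psi(x)\cap B(y_m,1)\neq\emptyset\}$ with $m\le n$. Hence $f_0$ is Borel and countably valued, and $d(f_0(x),\Psi(x))<1$.

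\emph{Inductive step.} Suppose $f_k$ is Borel and countably valued in $\{y_n\}$, with $d(f_k(x),\Psi(x))<2^{-k}$. On the Borel set $\{f_k=y_m\}$ define $f_{k+1}(x):=y_n$, where $n$ is the least index such that
\[
\Psi(x)\cap B(y_n,2^{-k-1})\cap B(y_m,2^{-k})\neq\emptyset .
\]
Such $n$ exists: pick $z\in\Psi(x)$ with $d(z,y_m)<2^{-k}$; then some $y_n$ is close enough to $z$ that $z$ lies in both balls. The intersection of the two balls is open, so each level set of $f_{k+1}$ is again Borel. By construction $d(f_{k+1}(x),\Psi(x))<2^{-k-1}$. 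Moreover the triangle inequality through the common point gives $d(f_{k+1}(x),f_k(x))<2^{-k-1}+2^{-k}<2^{-k+1}$.

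\emph{Passage to the limit.} The sequence $(f_k(x))_k$ is uniformly Cauchy, so by completeness it converges to some $f(x)$. As a pointwise limit of Borel maps into a metric space, $f$ is Borel. Finally $d(f(x),\Psi(x))\le d(f(x),f_k(x))+2^{-k}\to0$, and since $\Psi(x)$ is closed, $f(x)\in\Psi(x)$.

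The main obstacle is the inductive step. The new approximant has to stay close to the previous one, which forces the two-ball condition, and one must check that the defining events remain Borel. This works because weak measurability is stated for arbitrary open sets, not only for balls, and because on each piece $\{f_k=y_m\}$ the relevant open set $B(y_n,2^{-k-1})\cap B(y_m,2^{-k})$ is fixed. The standing Borel assumption on $X$ is used only to make sense of ``Borel measurable''; no further property of standard Borel spaces is needed.
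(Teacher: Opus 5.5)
Your proof is correct. It is the standard successive-approximation proof of the Kuratowski--Ryll-Nardzewski theorem, and each step checks out:
\begin{itemize}
\item the index in the inductive step exists because there is a point $z\in\Psi(x)$ with $d(z,y_m)<2^{-k}$;
\item the level sets are Borel, by weak measurability applied to the open set $B(y_n,2^{-k-1})\cap B(y_m,2^{-k})$ (restricted to the Borel piece $\{f_k=y_m\}$);
\item the bound $d(f_{k+1},f_k)<2^{-k+1}$ makes the sequence uniformly Cauchy;
\item since $\Psi(x)$ is closed, the limit lies in it.
\end{itemize}

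The paper itself gives no proof of this statement. It quotes the theorem in Appendix~C as a classical tool. It then applies it to the compact-valued map $\Phi(\mathbf m)=\{\mathbf v\in O(N)\colon \mathbf m=|\mathbf m|\,\mathbf v\}$, after checking weak measurability separately from the closed graph of $\Phi$ and the compactness of $O(N)$. So there is no competing argument to compare with.

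What your version adds is self-containedness. It also confirms your remark that the standard Borel hypothesis on $X$ is superfluous: the construction works verbatim on an arbitrary measurable space $(X,\mathcal A)$. The only properties of $Y$ that are genuinely used are separability, which supplies the countable dense set, and completeness, which supplies the limit.
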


\begin{lemma}\label{lem:measurable}
Let $\Ab$ be a real random matrix, and let
\[
|\Ab|=(\Ab\Ab^\top)^{1/2}.
\]
Then there exists a $\sigma(\Ab)$-measurable orthogonal matrix $\Ub$ such that
\[
\Ab=|\Ab|\,\Ub.
\]
\end{lemma}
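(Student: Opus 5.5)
The plan is to apply the Kuratowski--Ryll-Nardzewski theorem with $X=\mathrm{Mat}(N)$ (a standard Borel space) and $Y=O(N)$ (a compact metric, hence Polish, space). We use the set-valued map
\[
\Psi(\ab):=\{\ub\in O(N)\colon \ab=(\ab\ab^\top)^{1/2}\ub\}.
\]
Suppose we obtain a Borel selection $f$. Then setting $\Ub:=f(\Ab)$ gives a $\sigma(\Ab)$-measurable orthogonal matrix with $\Ab=|\Ab|\Ub$, which is the claim.

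First, we check that $\Psi(\ab)$ is nonempty. This is the classical polar decomposition: write an SVD $\ab=\vb\,\mathbf{d}\,\mathbf{w}^\top$ with $\vb,\mathbf{w}\in O(N)$ and $\mathbf{d}\ge0$ diagonal. Then $(\ab\ab^\top)^{1/2}=\vb\mathbf{d}\vb^\top$, so $\ub=\vb\mathbf{w}^\top$ lies in $\Psi(\ab)$. This works even when $\ab$ is singular, although $\ub$ is then not unique.

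Second, we check that $\Psi(\ab)$ is closed. It is the preimage of $\{\ab\}$ under the continuous map $\ub\mapsto(\ab\ab^\top)^{1/2}\ub$ on $O(N)$. Hence it is closed, and in fact compact.

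Third, we verify weak measurability, which is the main point. By the preceding Remark, since the values are compact it suffices to show that for each closed $F\subset O(N)$ the set $\{\ab\colon\Psi(\ab)\cap F\neq\emptyset\}$ is Borel. Consider the graph
\[
G:=\{(\ab,\ub)\in\mathrm{Mat}(N)\times O(N)\colon \ab=(\ab\ab^\top)^{1/2}\ub\}.
\]
The map $\ab\mapsto(\ab\ab^\top)^{1/2}$ is continuous, since the matrix square root is continuous on $\mathrm{Sym}_{\ge0}(N)$. Therefore $G$ is closed. It follows that
\[
\{\ab\colon\Psi(\ab)\cap F\neq\emptyset\}=\pi_1\bigl(G\cap(\mathrm{Mat}(N)\times F)\bigr),
\]
where $\pi_1$ is the projection onto the first factor. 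This is the projection of a closed set along the compact factor $O(N)$, and projection along a compact factor is a closed map. The set is therefore closed, in particular Borel. The only nontrivial inputs in this step are the continuity of the square root and the compactness of $O(N)$; I expect this projection argument to be the step that needs the most care.

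Finally, we apply the Kuratowski--Ryll-Nardzewski theorem to get a Borel $f$ with $f(\ab)\in\Psi(\ab)$ for all $\ab$, and compose it with $\Ab$.
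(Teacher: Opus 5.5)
Your proposal is correct and follows essentially the same route as the paper: the set-valued map $\ab\mapsto\{\ub\in O(N)\colon \ab=(\ab\ab^\top)^{1/2}\ub\}$ with nonempty compact values, a closed graph, closedness of $\{\ab\colon\Psi(\ab)\cap F\neq\emptyset\}$ as the projection of a closed set along the compact factor $O(N)$, and then the Kuratowski--Ryll-Nardzewski theorem composed with $\Ab$. The only difference is cosmetic: you cite the closed-map property of projections along a compact factor and use an explicit SVD for nonemptiness, whereas the paper verifies the projection step by a sequential-compactness argument and invokes the polar decomposition directly.
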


\begin{proof}
Let $O(N)$ denote the group of $N\times N$ orthogonal matrices. Since $O(N)$ is a closed and bounded subset of $\mathrm{Mat}(N)$, it is a compact metric space.

For each ${\bf m}\in \mathrm{Mat}(N)$, define
\[
\Phi({\bf m}) := \{\vb\in O(N)\colon {\bf m}=|{\bf m}|\,\vb\}.
\]
By the polar decomposition for real square matrices, $\Phi({\bf m})$ is nonempty for every ${\bf m}\in\mathrm{Mat}(N)$.

We next show that each $\Phi({\bf m})$ is closed in $O(N)$. Fix ${\bf m}$, and let $(\vb_k)_{k\ge1}$ be a sequence in $\Phi({\bf m})$ such that $\vb_k\to\vb$ in $O(N)$. Since
\[
{\bf m}=|{\bf m}|\,\vb_k \qquad\text{for all }k,
\]
and matrix multiplication is continuous, passing to the limit yields
\[
{\bf m}=|{\bf m}|\,\vb.
\]
Hence $\vb\in\Phi({\bf m})$, so $\Phi({\bf m})$ is closed. Since $O(N)$ is compact, each $\Phi({\bf m})$ is in fact compact.

Now consider the graph of $\Phi$:
\[
\mathrm{Graph}(\Phi)
:=\{({\bf m},\vb)\in \mathrm{Mat}(N)\times O(N)\colon {\bf m}=|{\bf m}|\,\vb\}.
\]
The map
\[
({\bf m},\vb)\mapsto {\bf m}-|{\bf m}|\,\vb
\]
is continuous on $\mathrm{Mat}(N)\times O(N)$, because ${\bf m}\mapsto {\bf m}{\bf m}^\top$ is continuous, the map
\[
{\bf s}\mapsto {\bf s}^{1/2}
\]
is continuous on the cone of symmetric positive semidefinite matrices, and matrix multiplication is continuous. Therefore $\mathrm{Graph}(\Phi)$ is closed.

We claim that $\Phi$ is weakly measurable. Since $O(N)$ is a compact metric space, it is enough to show that for every closed set $F\subset O(N)$, the set
\[
E_F:=\{{\bf m}\in \mathrm{Mat}(N)\colon \Phi({\bf m})\cap F\neq\emptyset\}
\]
is Borel. Indeed, if $U\subset O(N)$ is open, then
\[
U=\bigcup_{n=1}^\infty F_n,
\qquad
F_n:=\bigl\{\vb\in O(N)\colon  \mathrm{dist}(\vb,O(N)\setminus U)\ge 1/n\bigr\},
\]
and each $F_n$ is closed. Hence
\[
\{{\bf m}\colon\Phi({\bf m})\cap U\neq\emptyset\}
=\bigcup_{n=1}^\infty E_{F_n},
\]
so measurability for open sets follows once each $E_F$ is Borel.

Now fix a closed set $F\subset O(N)$. Then
\[
E_F
=
\pi_1\bigl(\mathrm{Graph}(\Phi)\cap (\mathrm{Mat}(N)\times F)\bigr),
\]
where $\pi_1\colon \mathrm{Mat}(N)\times O(N)\to \mathrm{Mat}(N)$ is the projection onto the first coordinate. Since $\mathrm{Graph}(\Phi)$ is closed and $F$ is closed in the compact space $O(N)$, the set
\[
C_F:=\mathrm{Graph}(\Phi)\cap (\mathrm{Mat}(N)\times F)
\]
is closed in $\mathrm{Mat}(N)\times O(N)$.

We show that $\pi_1(C_F)$ is closed. Let $({\bf m}_k)$ be a sequence in $\pi_1(C_F)$ such that ${\bf m}_k\to{\bf m}$ in $\mathrm{Mat}(N)$. For each $k$ there exists $\vb_k\in F$ with $({\bf m}_k,\vb_k)\in C_F$. Since $F$ is compact, after passing to a subsequence we may assume that $\vb_k\to\vb\in F$. Then
\[
({\bf m}_k,\vb_k)\to ({\bf m},\vb)
\]
in $\mathrm{Mat}(N)\times O(N)$. Because $C_F$ is closed, we get $({\bf m},\vb)\in C_F$, and hence ${\bf m}\in \pi_1(C_F)$. Thus $\pi_1(C_F)=E_F$ is closed.

Therefore $\Phi$ is weakly measurable.

Finally, $\mathrm{Mat}(N)\cong\mathbb R^{N^2}$ is a standard Borel space and $O(N)$ is a compact metric space, hence Polish. By the Kuratowski--Ryll-Nardzewski measurable selection theorem, there exists a Borel measurable map
\[
f\colon \mathrm{Mat}(N)\to O(N)
\]
such that
\[
f({\bf m})\in \Phi({\bf m})
\qquad\text{for every }{\bf m}\in \mathrm{Mat}(N).
\]
Define
\[
\Ub:=f(\Ab).
\]
Then $\Ub$ is $\sigma(\Ab)$-measurable, $\Ub\in O(N)$ almost surely, and
\[
\Ab=|\Ab|\,\Ub
\]
almost surely. This completes the proof.
\end{proof}

\bibliographystyle{plain}

\bibliography{Bibl}

\begin{thebibliography}{10}

\bibitem{AGZBook}
Greg~W. Anderson, Alice Guionnet, and Ofer Zeitouni.
\newblock {\em An introduction to random matrices}, volume 118 of {\em
  Cambridge Studies in Advanced Mathematics}.
\newblock Cambridge University Press, Cambridge, 2010.

\bibitem{ABO23}
Jonas Arista, Elia Bisi, and Neil O'Connell.
\newblock Matrix {W}hittaker processes.
\newblock {\em Probab. Theory Related Fields}, 187(1-2):203--257, 2023.

\bibitem{ABO}
Jonas Arista, Elia Bisi, and Neil O'Connell.
\newblock Matsumoto-{Y}or and {D}ufresne type theorems for a random walk on
  positive definite matrices.
\newblock {\em Ann. Inst. Henri Poincar\'{e} Probab. Stat.}, 60(2):923--945,
  2024.

\bibitem{MatrixPolymer}
Guillaume Barraquand and Zikun Ouyang.
\newblock Stationary inverse-{W}ishart polymers.
\newblock {\em arXiv:2511.14375}, 2025.

\bibitem{FreePerp}
S.~Belinschi, B.~Ko{\l}odziejek, and K.~Szpojankowski.
\newblock Free {P}erpetuities {I}: {E}xistence, {S}ubordination and {T}ail
  {A}symptotics.
\newblock {\em arXiv:2503.10319}, 2025.

\bibitem{Remarkable}
Serban~T. Belinschi and Alexandru Nica.
\newblock On a remarkable semigroup of homomorphisms with respect to free
  multiplicative convolution.
\newblock {\em Indiana Univ. Math. J.}, 57(4):1679--1713, 2008.

\bibitem{BP92}
Philippe Bougerol and Nico Picard.
\newblock Strict stationarity of generalized autoregressive processes.
\newblock {\em Ann. Probab.}, 20(4):1714--1730, 1992.

\bibitem{BDM}
D.~Buraczewski, E.~Damek, and T.~Mikosch.
\newblock {\em Stochastic models with power-law tails}.
\newblock Springer Series in Operations Research and Financial Engineering.
  Springer, [Cham], 2016.
\newblock The equation $X=AX+B$.

\bibitem{Sim10}
Dariusz Buraczewski, Ewa Damek, and Yves Guivarc'h.
\newblock Convergence to stable laws for a class of multidimensional stochastic
  recursions.
\newblock {\em Probab. Theory Related Fields}, 148(3-4):333--402, 2010.

\bibitem{Sim09}
Dariusz Buraczewski, Ewa Damek, Yves Guivarc'h, Andrzej Hulanicki, and Roman
  Urban.
\newblock Tail-homogeneity of stationary measures for some multidimensional
  stochastic recursions.
\newblock {\em Probab. Theory Related Fields}, 145(3-4):385--420, 2009.

\bibitem{MY2024}
Reda Chhaibi and Manon Defosseux.
\newblock Matsumoto-{Y}or processes on {J}ordan algebras.
\newblock {\em Probab. Theory Relat. Fields}, 2025.

\bibitem{DM24}
E.~Damek and S.~Mentemeier.
\newblock Analysing heavy-tail properties of stochastic gradient descent by
  means of stochastic recurrence equations.
\newblock {\em J. Appl. Probab.}, page 1–25, 2026.

\bibitem{Feral}
D.~F\'{e}ral.
\newblock On large deviations for the spectral measure of discrete {C}oulomb
  gas.
\newblock In {\em S\'{e}minaire de probabilit\'{e}s {XLI}}, volume 1934 of {\em
  Lecture Notes in Math.}, pages 19--49. Springer, Berlin, 2008.

\bibitem{MATRIXKESTEN}
Tristan Gauti\'{e}, Jean-Philippe Bouchaud, and Pierre Le~Doussal.
\newblock Matrix {K}esten recursion, inverse-{W}ishart ensemble and fermions in
  a {M}orse potential.
\newblock {\em J. Phys. A}, 54(25):Paper No. 255201, 58, 2021.

\bibitem{Gol91}
Charles~M. Goldie.
\newblock Implicit renewal theory and tails of solutions of random equations.
\newblock {\em Ann. Appl. Probab.}, 1(1):126--166, 1991.

\bibitem{GLP16}
Y.~Guivarc'h and \'{E}. Le~Page.
\newblock Spectral gap properties for linear random walks and {P}areto's
  asymptotics for affine stochastic recursions.
\newblock {\em Ann. Inst. Henri Poincar\'{e} Probab. Stat.}, 52(2):503--574,
  2016.

\bibitem{HP00}
F.~Hiai and D.~Petz.
\newblock {\em The semicircle law, free random variables and entropy},
  volume~77 of {\em Mathematical Surveys and Monographs}.
\newblock American Mathematical Society, Providence, RI, 2000.

\bibitem{ML3}
Liam Hodgkinson, Zhichao Wang, and Michael~W. Mahoney.
\newblock Models of heavy-tailed mechanistic universality.
\newblock In {\em Forty-second International Conference on Machine Learning},
  2025.

\bibitem{MAnal}
Roger~A. Horn and Charles~R. Johnson.
\newblock {\em Matrix analysis}.
\newblock Cambridge University Press, Cambridge, second edition, 2013.

\bibitem{ConvBreiman}
Martin Jacobsen, Thomas Mikosch, Jan Rosi\'{n}ski, and Gennady Samorodnitsky.
\newblock Inverse problems for regular variation of linear filters, a
  cancellation property for {$\sigma$}-finite measures and identification of
  stable laws.
\newblock {\em Ann. Appl. Probab.}, 19(1):210--242, 2009.

\bibitem{Kes73}
H.~Kesten.
\newblock Random difference equations and renewal theory for products of random
  matrices.
\newblock {\em Acta Math.}, 131:207--248, 1973.

\bibitem{KMY}
B.~Ko{\l}odziejek.
\newblock The {M}atsumoto-{Y}or property and its converse on symmetric cones.
\newblock {\em J. Theoret. Probab.}, 30(2):624--638, 2017.

\bibitem{LW00}
G\'{e}rard Letac and Jacek Weso{\l}owski.
\newblock An independence property for the product of {GIG} and gamma laws.
\newblock {\em Ann. Probab.}, 28(3):1371--1383, 2000.

\bibitem{MassamWesolowski06}
H\'{e}l\`ene Massam and Jacek Weso{\l}owski.
\newblock The {M}atsumoto-{Y}or property and the structure of the {W}ishart
  distribution.
\newblock {\em J. Multivariate Anal.}, 97(1):103--123, 2006.

\bibitem{Newman86}
C.~M. Newman.
\newblock The distribution of {L}yapunov exponents: exact results for random
  matrices.
\newblock {\em Comm. Math. Phys.}, 103(1):121--126, 1986.

\bibitem{NicaSpeicherMult}
Alexandru Nica and Roland Speicher.
\newblock On the multiplication of free {$N$}-tuples of noncommutative random
  variables.
\newblock {\em Amer. J. Math.}, 118(4):799--837, 1996.

\bibitem{ST97}
E.~B. Saff and V.~Totik.
\newblock {\em Logarithmic potentials with external fields}, volume 316 of {\em
  Grundlehren der mathematischen Wissenschaften [Fundamental Principles of
  Mathematical Sciences]}.
\newblock Springer-Verlag, Berlin, 1997.
\newblock Appendix B by Thomas Bloom.

\bibitem{ShlyakhtenkoTao}
Dimitri Shlyakhtenko and Terence Tao.
\newblock Fractional free convolution powers.
\newblock {\em Indiana Univ. Math. J.}, 71(6):2551--2594, 2022.

\bibitem{Yoshida}
H.~Yoshida.
\newblock Remarks on a free analogue of the beta prime distribution.
\newblock {\em J. Theoret. Probab.}, 33(3):1363--1400, 2020.

\end{thebibliography}

\end{document}